\theoremstyle{plain}
\newtheorem{Theorem}{Theorem}
\newtheorem{Lemma}[Theorem]{Lemma}
\newtheorem{Proposition}[Theorem]{Proposition}
\theoremstyle{definition}
\newtheorem{Definition}[Theorem]{Definition}
\newtheorem{Corollary}[Theorem]{Corollary}
\theoremstyle{remark}
\newtheorem{Remark}[Theorem]{Remark}
\newtheorem{Example}[Theorem]{Example}
\numberwithin{equation}{section}
\numberwithin{Theorem}{section}
\renewcommand{\mathit}{\bm}
\renewcommand{\mathfrak}{\mathscr}
\renewcommand{\mathtt}[1]{\scalebox{1.2}{\bf \texttt{\upshape#1}}}
\renewcommand{\emph}[1]{\textcolor{blue}{\textbf{#1}}}
\renewcommand{\hl}[1]{#1}
\def\oldbibitem{} \let\oldbibitem=\bibitem
\def\bibitem{\stepcounter{citnum}\oldbibitem}
\renewcommand*{\backref}[1]{}
\renewcommand*{\backrefalt}[4]{[{\tiny%
    \ifcase #1 \textsl{Not cited}%
          \or \textsl{Cited on page}~\textcolor{BrickRed}{#2}%
          \else \textsl{Cited on pages}~\textcolor{BrickRed}{#2}%
    \fi%
    }]}
\author{\small\scshape S\lowercase{teven} D\lowercase{uplij}}
\address{
Center for Information Technology (CIT),
Universit\"at M\"unster,
R\"ontgenstrasse 7-13\\
D-48149 M\"unster,
Germany}
\email{\small \sf douplii@uni-muenster.de;
sduplij@gmail.com;
https://ivv5hpp.uni-muenster.de/u/douplii}
\title{\large\bfseries\scshape
H\lowercase{yperpolyadic structures}}
\date{\textit{of start} September 23, 2023. \textit{Date}:
\textit{of completion}
December 3, 2023.
\newline
\mbox{}\hskip 1.16em
\textit{Total}:
60
references
}
\renewcommand{\refname}{\textsc{References}}
\let\origsection\section
\renewcommand{\section}[1]{\sectionmark{#1}\origsection{#1}}
\let\origsubsection\subsection
\renewcommand{\subsection}[1]{\subsectionmark{#1}\origsubsection{#1}}
\renewenvironment{thebibliography}[1]{%
  \@xp\origsection\@xp*\@xp{\refname}%
  \normalfont\footnotesize\labelsep .9em\relax
  \renewcommand\theenumiv{\arabic{enumiv}}\let\p@enumiv\@empty
  \vspace*{-5pt}
  \list{\@biblabel{\theenumiv}}{\settowidth\labelwidth{\@biblabel{#1}}%
    \leftmargin\labelwidth \advance\leftmargin\labelsep
    \usecounter{enumiv}}%
  \sloppy \clubpenalty\@M \widowpenalty\clubpenalty
  \sfcode`\.=\@m
}{%
  \def\@noitemerr{\@latex@warning{Empty `thebibliography' environment}}%
  \endlist
}
\subjclass[2010]{11R52, 15A03, 15A72, 17A35, 17A40, 17A42, 20N10, 20N15}
\keywords{hypercomplex algebra, ternary algebra, $n$-ary algebra, querelement, quaternion, octonion, Cayley-Dickson construction, division algebra, structure constant, vector space, vectorization, vector multiplication}
\begin{document}

\mbox{}
\vskip 2.5cm
\begin{abstract}

\noindent 
We introduce a new class of division algebras, the hyperpolyadic
algebras, which correspond to the binary division algebras $\mathbb{R}$,
$\mathbb{C}$, $\mathbb{H}$, $\mathbb{O}$ without considering new elements.
First, we use the  matrix polyadization procedure proposed earlier which
increases the dimension of the algebra. The algebras obtained in this way obey binary
addition and a nonderived $n$-ary multiplication and their subalgebras are
division $n$-ary algebras. For each invertible element, we define a new norm
which is polyadically multiplicative, and the corresponding map is a $n$-ary
homomorphism. We define a polyadic analog of the Cayley--Dickson construction
which corresponds to the consequent embedding of monomial matrices from the
polyadization procedure. We then obtain another series of $n$-ary algebras
corresponding to the binary division algebras which have a higher dimension, which
is proportional to the intermediate arities, and which are not isomorphic to those
obtained by the previous constructions. Second, a new polyadic product of
vectors in any vector space is defined, which is consistent with the
polyadization procedure using vectorization. Endowed with this introduced
product, the vector space becomes a polyadic algebra which is a division
algebra under some invertibility conditions, and its structure constants are
computed. Third, we propose a new iterative process (we call it the
\textquotedblleft imaginary tower\textquotedblright), which leads to nonunital
nonderived ternary division algebras of half the dimension, which we call
\textquotedblleft half-quaternions\textquotedblright\ and \textquotedblleft
half-octonions\textquotedblright. The latter are not the subalgebras of the binary
division algebras, but subsets only, since they have different arity.
Nevertheless, they are actually ternary division algebras, because they allow
division, and their nonzero elements are invertible. From the multiplicativity
of the introduced \textquotedblleft half-quaternion\textquotedblright\ norm, we
obtain the ternary analog of the sum of two squares identity. We show that
the ternary division algebra of imaginary
\textquotedblleft half-octonions\textquotedblright\ is unitless and 
totally associative.

\end{abstract}
\maketitle

\thispagestyle{empty}


\newpage
\thispagestyle{empty}
\mbox{}
\vspace{1cm}
\tableofcontents
\vspace{-2.6cm}

\pagestyle{fancy}

\addtolength{\footskip}{15pt}

\renewcommand{\sectionmark}[1]{%
\markboth{
{ \scshape #1}}{}}

\renewcommand{\subsectionmark}[1]{%
\markright{
\mbox{\;}\\[5pt]
\textmd{#1}}{}}

\fancyhead{}
\fancyhead[EL,OR]{\leftmark}
\fancyhead[ER,OL]{\rightmark}
\fancyfoot[C]{\scshape -- \textcolor{BrickRed}{\thepage} --}

\renewcommand\headrulewidth{0.5pt}
\fancypagestyle {plain1}{ %
\fancyhf{}
\renewcommand {\headrulewidth }{0pt}
\renewcommand {\footrulewidth }{0pt}
}

\fancypagestyle{plain}{ %
\fancyhf{}
\fancyhead[C]{\scshape S\lowercase{teven} D\lowercase{uplij} \hskip 0.7cm \MakeUppercase{Polyadic Hopf algebras and quantum groups}}
\fancyfoot[C]{\scshape - \thepage  -}
\renewcommand {\headrulewidth }{0pt}
\renewcommand {\footrulewidth }{0pt}
}

\fancypagestyle{fancyref}{ %
\fancyhf{} 
\fancyhead[C]{\scshape R\lowercase{eferences} }
\fancyfoot[C]{\scshape -- \textcolor{BrickRed}{\thepage} --}
\renewcommand {\headrulewidth }{0.5pt}
\renewcommand {\footrulewidth }{0pt}
}

\fancypagestyle{emptyf}{
\fancyhead{}
\fancyfoot[C]{\scshape -- \textcolor{BrickRed}{\thepage} --}
\renewcommand{\headrulewidth}{0pt}
}
\mbox{}
\vskip 2.5cm
\thispagestyle{emptyf}

\section{\textsc{Introduction}}

The field extension is a fundamental concept of algebra
\cite{mccoy,rotman,lovett} and number theory \cite{bor/sch,neurkich,samuel}.
Informally, the main idea is to enlarge a given structure in a special way
(using elements from outside the underlying set) and try to obtain a resulting
algebraic structure with \textquotedblleft good\textquotedblright\ properties.
One of the first well-known examples is the field of complex numbers
$\mathbb{C}$, which is a simple field extension of real numbers $\mathbb{R}$.
The direct generalization of this construction leads to the hypercomplex
numbers (see, e.g., \cite{wed1908,kan/sol}) defined as finite $D$-dimensional
algebras $\mathbb{A}$ over the reals with a special basis (with squares
restricted to $0,\pm1$). Among numerous versions of hypercomplex number
systems \cite{haw1902,tab1904} (for a modern review, see, e.g.,
\cite{yaglom,bur/bur}), only the complex numbers $\mathbb{A}=\mathbb{C}$
($D=2$), quaternions $\mathbb{A}=\mathbb{H}$ ($D=4$), and octonions
$\mathbb{A}=\mathbb{O}$ ($D=8$) are classical division algebras (with no zero
divisors or nilpotents) \cite{schafer,sal92,gubareni}, and the latter two  can
be obtained via the Cayley--Dickson doubling procedure
\cite{dic19,alb42,scha54}.

In this paper, we construct nonderived hyperpolyadic structures corresponding
to the above division algebras without introducing new elements (as in, e.g.,
\cite{dub/vol,lip/rau/vol}). Recall the numerous applications of higher arity
structures in physics \cite{ker2000,ker2012,cast1}, in particular in particle
dynamical models \cite{nam0,tak3,ata/mak/sil} and supersymmetry
\cite{abr/ker/roy,bar/gun1,bag/lam1}. There are plenty of $n$-ary
generalizations of associative and Lie algebras, for examples and reviews, see
\cite{car4,fil0,mic/vin,azc/izq}.

Here, we will first use the matrix polyadization procedure proposed by the
author in \cite{duplij2022}. We show that the polyadic analog of the
Cayley--Dickson construction can only lead to non-division algebras of higher
dimensions than the initial division algebras. For the $n$-ary algebras thus
obtained, we introduce a new norm which is polyadically multiplicative and is
well defined for invertible elements.

Second, we propose a new polyadic product of vectors in any vector space,
which is consistent with the polyadization procedure using vectorization.
Endowed with this product, the vector space becomes a polyadic algebra, which is
a division algebra under some invertibility conditions. Its structure
constants are computed, and a numerical example is given.

Third, on the subsets of the binary division algebras, we propose another new
construction (we call it the \textquotedblleft imaginary
tower\textquotedblright) and an iterative process which naturally gives the
corresponding nonderived ternary division algebras of half a dimension. The
latter are not subalgebras, because they have a different multiplication and
different arity than the initial algebras. We call the nonunital ternary
algebras obtained in this way \textquotedblleft
half-quaternions\textquotedblright\ and \textquotedblleft
half-octonions\textquotedblright. They are actually division algebras, because
nonzero elements are invertible and thus allow division. From the
multiplicativity of the \textquotedblleft half-quaternion\textquotedblright%
\ norm, we obtain the ternary analog of the sum of two squares identity.
Finally, we show that the unitless ternary division algebra of imaginary
\textquotedblleft half-octonions\textquotedblright\ is ternary totally associative.

\section{\textsc{Preliminaries}}

Here, we briefly remind the reader of notation from \cite{duplij2022}. A
(one-set) polyadic algebraic structure $\mathcal{A}$ is a set $\mathsf{A}$
closed with respect to polyadic operations (or $n$-ary multiplication)
$\mu^{\left[  n\right]  }:\mathsf{A}^{n}\rightarrow\mathsf{A}$ ($n$-ary
magma). We denote polyads \cite{pos} as $\vec{a}=\vec{a}^{\left(  k\right)
}=\left(  a_{1},\ldots,a_{k}\right)  $, $a_{i}\in\mathsf{A}$, and
$a^{k}=\overset{k}{\left(  \overbrace{a,\ldots,a}\right)  }$, $a\in\mathsf{A}$
(usually, the value of $k$ follows from the context). A (positive) polyadic
power \hl{is} 
\begin{equation}
a^{\left\langle \ell_{\mu}\right\rangle }=\left(  \mu^{\left[  n\right]
}\right)  ^{\circ\ell_{\mu}}\left[  a^{\ell_{\mu}\left(  n-1\right)
+1}\right]  ,\ \ \ \ a\in\mathsf{A},\ \ell_{\mu}\in\mathbb{N}. \label{al}%
\end{equation}
\hl{A} 
 polyadic $\ell_{\mu}$-idempotent (or idempotent for $\ell_{\mu}=1$) is
defined by $a^{\left\langle \ell_{\mu}\right\rangle }=a$. A polyadic zero is
defined by $\mu^{\left[  n\right]  }\left[  \vec{a},z\right]  =z$,
$z\in\mathsf{A}$, $\vec{a}\in\mathsf{A}^{n-1}$, where $z$ can be on any place.
An element of a polyadic algebraic structure $a$ is called $\ell_{\mu}%
$-nilpotent (or nilpotent for $\ell_{\mu}=1$), if there exist $\ell_{\mu}$
such that $a^{\left\langle \ell_{\mu}\right\rangle }=z$. A polyadic (or
$n$-ary) identity (or neutral element) is defined by%
\begin{equation}
\mu^{\left[  n\right]  }\left[  a,e^{n-1}\right]  =a,\ \ \ \ \forall
a\in\mathsf{A}, \label{mn}%
\end{equation}
where $a$ can be on any place on the l.h.s. of (\ref{mn}). In addition, there
exist neutral polyads (usually not unique) satisfying%
\begin{equation}
\mu^{\left[  n\right]  }\left[  a,\vec{n}\right]  =a,\ \ \ \ \forall
a\in\mathsf{A}. \label{man}%
\end{equation}

A one-set polyadic algebraic structure $\left\langle \mathsf{A}\mid
\mu^{\left[  n\right]  }\right\rangle $ is totally associative, if
\begin{equation}
\left(  \mu^{\left[  n\right]  }\right)  ^{\circ2}\left[  \vec{a},\vec{b}%
,\vec{c}\right]  =\mu^{\left[  n\right]  }\left[  \vec{a},\mu^{\left[
n\right]  }\left[  \vec{b}\right]  ,\vec{c}\right]  =invariant, \label{ma}%
\end{equation}
with respect to the placement of the internal multiplication on any of the $n$
places, and $\vec{a},\vec{b},\vec{c}$ are polyads of the necessary sizes.

A polyadic semigroup $\mathcal{S}^{\left[  n\right]  }$ is a one-set and
one-operation structure in which $\mu^{\left[  n\right]  }$ is totally
associative. A polyadic structure is (totally) commutative, if $\mu^{\left[
n\right]  }=\mu^{\left[  n\right]  }\circ\sigma$, for all $\sigma\in S_{n}$. A
polyadic structure is solvable, if for all polyads $b$, $c$ and an element
$x$, one can (uniquely) resolve the Equation (with respect to $x$) for
$\mu^{\left[  n\right]  }\left[  \vec{b},x,\vec{c}\right]  =a$, where $x$ can
be on any place, and $\vec{b},\vec{c}$ are polyads of the needed lengths. A
solvable polyadic structure is called $n$-ary quasigroup \cite{belousov}. An
associative polyadic quasigroup is called a $n$-ary (or polyadic) group
$\mathcal{G}^{\left[  n\right]  }$ (for a review, see, e.g., \cite{galmak1}).
In an $n$-ary group, the only solution of
\begin{equation}
\mu^{\left[  n\right]  }\left[  a^{n-1},\tilde{a}\right]
=a,\ \ \ \ \ a,\tilde{a}\in\mathsf{A}, \label{mgg}%
\end{equation}
is called the querelement (the polyadic analog of an inverse) of $a$ and
denoted by $\tilde{a}$ \cite{dor3}, where $\tilde{a}$ can be on any place. The
relation (\ref{mgg}) can be considered as a definition of the unary
queroperation $\bar{\mu}^{\left(  1\right)  }\left[  a\right]  =\tilde{a}$
\cite{gle/gla}.

For further details and references, see \cite{duplij2022}.

\section{\textsc{Matrix Polyadization\label{sec-matr}}}

{Let us briefly (just to establish notation and terminology) recall that the
$2-,4-,8$-dimensional } algebras are the only hypercomplex extensions of the reals
$\mathbb{A}=\mathbb{R}$ (Hurwitz's theorem for composition algebras)
$\mathbb{A}=\mathbb{D}=\mathbb{C},\mathbb{H},\mathbb{O}$ which are normed
division algebras. The first two are associative (and can be represented by
matrices), and only $\mathbb{C}$ is commutative (being a field). We use the
unified {notation}
 $\mathit{z}\in\mathbb{D}$, and if we need to distinguish and
concretize, the standard parametrization will be exploited $\mathbb{C\ni
}\mathit{z}=\mathit{z}_{\left(  2\right)  }=a+b\mathit{i}$ and $\mathbb{H\ni
}\mathit{z}=\mathit{z}_{\left(  4\right)  }=a+b\mathit{i}+c\mathit{j}%
+d\mathit{k}$, etc., $a,d,c,d\in\mathbb{R}$. The standard Euclidean norm
($2$-norm) $\left\Vert \mathit{z}\right\Vert =\sqrt{\mathit{z}^{\ast
}\mathit{z}}$ ({where}
 the conjugate is $\mathit{z}_{\left(  2\right)  }^{\ast
}=a-b\mathit{i}$, etc.), for $\mathit{z}\in\mathbb{D}$ (which for $\mathbb{C}$
coincides with the modulus $\left\Vert \mathit{z}_{\left(  2\right)
}\right\Vert =\left\vert \mathit{z}_{\left(  2\right)  }\right\vert
=\sqrt{a^{2}+b^{2}}$, and $\left\Vert \mathit{z}_{\left(  4\right)
}\right\Vert =\sqrt{a^{2}+b^{2}+c^{2}+d^{2}}$, etc.) has the properties%
\begin{align}
\left\Vert \mathit{1}\right\Vert  &  =1,\label{10}\\
\left\Vert \lambda\mathit{z}\right\Vert  &  =\left\vert \lambda\right\vert
\left\Vert \mathit{z}\right\Vert ,\label{11}\\
\left\Vert \mathit{z}^{\prime}+\mathit{z}^{\prime\prime}\right\Vert  &
\leq\left\Vert \mathit{z}^{\prime}\right\Vert +\left\Vert \mathit{z}%
^{\prime\prime}\right\Vert ,\ \ \ \ \ 1,\lambda\in\mathbb{R},\ \ \mathit{1}%
,\mathit{z},\mathit{z}^{\prime},\mathit{z}^{\prime\prime}\in\mathbb{D},
\label{12}%
\end{align}
and is multiplicative
\begin{equation}
\left\Vert \mathit{z}_{1}\mathit{z}_{2}\right\Vert =\left\Vert \mathit{z}%
_{1}\right\Vert \left\Vert \mathit{z}_{2}\right\Vert \in\mathbb{R}%
_{\geqslant0},\ \ \ \ \mathit{z}\in\mathbb{D}, \label{zz}%
\end{equation}
such that the corresponding mapping $\mathbb{D}\rightarrow\mathbb{R}%
_{\geqslant0}$ is a homomorphism. Each nonzero element of the above normed
unital algebra has the multiplicative inverse $\mathit{z}^{-1}\mathit{z}%
=\mathit{1}$, because the norm vanishes only for $\mathit{z}=0$ and there are
no zero divisors, and therefore, from $\left\Vert \mathit{z}\right\Vert
^{2}=\mathit{z}^{\ast}\mathit{z}\in\mathbb{R}_{\geqslant0}$, it follows that%
\begin{equation}
\mathit{z}^{-1}=\frac{\mathit{z}^{\ast}}{\left\Vert \mathit{z}\right\Vert
^{2}},\ \ \ \mathit{z}\in\mathbb{D}\setminus\left\{  0\right\}  . \label{z1}%
\end{equation}

To construct the polyadic analogs of the binary hypercomplex algebras $\mathbb{A}$
and, in particular, of the binary division algebras $\mathbb{D}$ (over
$\mathbb{R}$), we use the polyadization procedure proposed in
\cite{duplij2022} (called there block-matrix polyadization). It is based on
the general structure theorem for polyadic rings (a generalization of the
Wedderburn theorem): any simple $\left(  2,n\right)  $-ring is isomorphic to
the ring of special cyclic shift block-matrices (of the shape (\ref{zn})) over
a division ring \cite{nik84a}.

Let us introduce the $\left(  n-1\right)  \times\left(  n-1\right)  $ cyclic
shift weighted matrix with the elements from the algebra $\mathbb{A}$%
\begin{equation}
\mathit{Z}=\mathit{Z}^{\left[  n\right]  }=\left(
\begin{array}
[c]{ccccc}%
0 & \mathit{z}_{1} & \ldots & 0 & 0\\
0 & 0 & \mathit{z}_{2} & \ldots & 0\\
0 & 0 & \ddots & \ddots & \vdots\\
\vdots & \vdots & \ddots & 0 & \mathit{z}_{n-2}\\
\mathit{z}_{n-1} & 0 & \ldots & 0 & 0
\end{array}
\right)  ,\ \ \ \mathit{z}_{i}\in\mathbb{A}. \label{zn}%
\end{equation}

The matrices of such a shape, i.e., (\ref{zn}) play a considerable role in
coding \cite{kim/lee2004} and were intensively studied in
\cite{chi/nak,gau/tsai/wang}. Here, we will apply the cyclic shift weighted
matrices to the polyadization procedure introduced in \cite{duplij2022}.

The set of matrices of the form (\ref{zn}) is closed with respect to the
ordinary product $\left(  \cdot\right)  $ of exactly $n$ matrices, but not of
fewer. Therefore, we can define the $n$-ary multiplication \cite{duplij2022}%
{\fontsize{7.8}{7.8}\selectfont\begin{equation}
\mathit{\mu}_{\mathit{Z}}^{\left[  n\right]  }\left[  \overset{n}%
{\overbrace{\mathit{Z}^{\prime},\mathit{Z}^{\prime\prime}\ldots\mathit{Z}%
^{\prime\prime\prime}}}\right]  =\mathit{Z}^{\prime}\cdot\mathit{Z}%
^{\prime\prime}\cdot\ldots\cdot\mathit{Z}^{\prime\prime\prime}=\mathit{Z}%
=\mathit{Z}^{\left[  n\right]  }, \label{mz}%
\end{equation}}
which is nonderived in the sense that the binary (and all $\leq n-1$) products
of $\mathit{Z}^{\left[  n\right]  }$'s are outside the set (\ref{zn}).

\begin{Remark}
\label{rem-add}\hl{The} 
 binary addition in the algebra $\mathbb{A}$ transfers in
the standard way to matrix addition (as component-wise addition), and so we
will mostly pay attention to the multiplicative part, implying that the
addition of $\mathit{Z}$-matrices (\ref{zn}) is always binary.
\end{Remark}

\begin{Definition}
We call the following new algebraic structure%
{\fontsize{7.8}{7.8}\selectfont\begin{equation}
\mathbb{A}^{\left[  n\right]  }=\left\langle \left\{  \mathit{Z}^{\left[
n\right]  }\right\}  \mid(+),\mathit{\mu}_{\mathit{Z}}^{\left[  n\right]
}\right\rangle \label{an}%
\end{equation}}
a $n$-ary hypercomplex algebra $\mathbb{A}^{\left[  n\right]  }$ which
corresponds to the binary hypercomplex algebra $\mathbb{A}$ by the matrix
polyadization procedure.
\end{Definition}

\begin{Proposition}
The dimension $D^{\left[  n\right]  }$ of the $n$-ary hypercomplex algebra
$\mathbb{A}^{\left[  n\right]  }$ is%
\begin{equation}
D^{\left[  n\right]  }=\dim\mathbb{A}^{\left[  n\right]  }=D\left(
n-1\right)  . \label{dn}%
\end{equation}

\end{Proposition}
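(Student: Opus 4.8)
The plan is to recognize that the claimed formula is purely a statement about the additive (real vector space) structure of $\mathbb{A}^{\left[  n\right]  }$ and has nothing to do with the nonderived $n$-ary multiplication $\mathit{\mu}_{\mathit{Z}}^{\left[  n\right]  }$ of (\ref{mz}). By Remark \ref{rem-add}, addition on the set $\left\{  \mathit{Z}^{\left[  n\right]  }\right\}  $ is ordinary (component-wise) matrix addition, and multiplication by real scalars acts entrywise through the real vector space structure of $\mathbb{A}$. Since adding two matrices of the shape (\ref{zn}), or scaling one of them by a real number, leaves every forced-zero position equal to zero while combining the weighted positions inside $\mathbb{A}$, the set $\left\{  \mathit{Z}^{\left[  n\right]  }\right\}  $ is a genuine $\mathbb{R}$-linear subspace of the $\left(  n-1\right)  \times\left(  n-1\right)  $ matrices over $\mathbb{A}$. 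I take $\dim\mathbb{A}^{\left[  n\right]  }$ to mean the dimension of this subspace over $\mathbb{R}$.

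Next I would count the free parameters. Inspecting (\ref{zn}), a matrix $\mathit{Z}^{\left[  n\right]  }$ is completely determined by the entries $\mathit{z}_{1},\ldots,\mathit{z}_{n-1}\in\mathbb{A}$ occupying the $n-1$ cyclic-shift positions, namely the superdiagonal slots $\left(  i,i+1\right)  $ for $i=1,\ldots,n-2$ together with the corner slot $\left(  n-1,1\right)  $, while every remaining entry is pinned to zero. This yields a map $\mathit{Z}^{\left[  n\right]  }\mapsto\left(  \mathit{z}_{1},\ldots,\mathit{z}_{n-1}\right)  $ from $\left\{  \mathit{Z}^{\left[  n\right]  }\right\}  $ to $\mathbb{A}^{n-1}$ that is manifestly a bijection and, by the previous paragraph, $\mathbb{R}$-linear; hence it is an isomorphism of real vector spaces.

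Finally, I would invoke the standard dimension formula for a direct sum of copies: since $\dim_{\mathbb{R}}\mathbb{A}=D$, one has $\dim_{\mathbb{R}}\mathbb{A}^{n-1}=\left(  n-1\right)  D$, and therefore $D^{\left[  n\right]  }=D\left(  n-1\right)  $, which is exactly (\ref{dn}).

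There is essentially no computational obstacle here; the only point deserving care is conceptual rather than technical. One must justify that the ``dimension'' of the $n$-ary object is read off from its binary additive structure alone, so that the nonderived polyadic multiplication plays no role in the count. Once that identification is granted, the statement reduces to the elementary observation that the matrix shape (\ref{zn}) carries precisely $n-1$ independent $\mathbb{A}$-valued parameters.
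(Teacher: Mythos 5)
Your argument is correct and is exactly the reasoning the paper compresses into its one-line proof (``It obviously follows from (\ref{zn})''): the matrix shape carries $n-1$ independent $\mathbb{A}$-valued entries, and the additive structure is component-wise, so the real dimension is $D(n-1)$. Your version merely makes explicit the $\mathbb{R}$-linear bijection with $\mathbb{A}^{n-1}$ that the paper leaves implicit.
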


\begin{proof}
It obviously follows from (\ref{zn}).
\end{proof}

\begin{Proposition}
If the binary hypercomplex algebra $\mathbb{A}$ is associative (for the
dimensions $D=1$, $D=2$ and $D=4$), the $n$-ary multiplication (\ref{mz}) in
components has the cyclic product form \cite{duplij2022}%
\begin{align}
\overset{n}{\overbrace{\mathit{z}_{1}^{\prime}\mathit{z}_{2}^{\prime\prime
}\ldots\mathit{z}_{n-1}^{\prime\prime\prime}\mathit{z}_{1}^{\prime\prime
\prime\prime}}}  &  =\mathit{z}_{1},\nonumber\\
\overset{n}{\overbrace{\mathit{z}_{2}^{\prime}\mathit{z}_{3}^{\prime\prime
}\ldots\mathit{z}_{1}^{\prime\prime\prime}\mathit{z}_{2}^{\prime\prime
\prime\prime}}}  &  =\mathit{z}_{2},\nonumber\\
&  \vdots\nonumber\\
\overset{n}{\overbrace{\mathit{z}_{n-1}^{\prime}\mathit{z}_{1}^{\prime\prime
}\ldots\mathit{z}_{n-2}^{\prime\prime\prime}\mathit{z}_{n-1}^{\prime
\prime\prime\prime}}}  &  =\mathit{z}_{n-1},\ \ \ \ \ \ \mathit{z}%
_{i},\mathit{z}_{i}^{\prime},\ldots,\mathit{z}_{i}^{\prime\prime\prime
},\mathit{z}_{i}^{\prime\prime\prime\prime}\in\mathbb{A}. \label{cy}%
\end{align}

\end{Proposition}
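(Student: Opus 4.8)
The plan is to read off the entries of the ordinary matrix product on the left-hand side of (\ref{mz}) and match them against the weighted cyclic-shift template (\ref{zn}). First I would record that each factor is a \emph{monomial} matrix, i.e.\ it carries exactly one nonzero entry in every row and every column: writing the index set cyclically as $\mathbb{Z}/(n-1)\mathbb{Z}$ with representatives $1,\ldots,n-1$, the nonzero entry of $\mathit{Z}$ in row $i$ sits in column $i+1$ with weight $\mathit{z}_i$ (indices mod $n-1$). Thus $\mathit{Z}$ is the weighted version of the cyclic-shift permutation matrix attached to $\pi\colon i\mapsto i+1$, a cycle of order exactly $n-1$.

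Second, I would pin down the support of the product $\mathit{Z}^{\prime}\cdot\mathit{Z}^{\prime\prime}\cdots\mathit{Z}^{\prime\prime\prime}$ of $n$ such matrices. Since monomial matrices with underlying permutations compose to a monomial matrix whose permutation is the composite, the product of $n$ copies realises $\pi^{n}=\pi$, because $n\equiv1\ (\mathrm{mod}\ n-1)$ forces $\pi^{n}=\pi^{\,n\bmod (n-1)}=\pi^{1}$. Hence the product is again supported on the positions $(i,i+1)$, i.e.\ it is again of the shape (\ref{zn}); this is precisely the closure (nonderivedness) already noted after (\ref{mz}), and it tells us that the left-hand sides of (\ref{cy}) are exactly the $n-1$ free entries of the resulting matrix.

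Third, I would evaluate the weights. Because each row of each factor has a single nonzero entry, the nested sum defining the $(i,i+1)$ entry collapses to the unique surviving path $i\rightarrow i+1\rightarrow\cdots\rightarrow i+n$, with $i+n\equiv i+1\ (\mathrm{mod}\ n-1)$, and the accumulated weight is the \emph{ordered} product $\mathit{z}_i^{\prime}\mathit{z}_{i+1}^{\prime\prime}\cdots\mathit{z}_{i-1}^{\prime\prime\prime}\mathit{z}_i^{\prime\prime\prime\prime}$, the subscripts running cyclically modulo $n-1$. Setting $i=1,2,\ldots,n-1$ then reproduces the $n-1$ cyclic identities (\ref{cy}) verbatim, with subscripts $1,2,\ldots,n-1,1$ in the first line and shifted cyclically thereafter.

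The one genuine subtlety, and the reason the hypothesis singles out the associative dimensions $D=1,2,4$, is that this collapse to a single ordered product of $\mathbb{A}$-weights is bracketing-independent only when $\mathbb{A}$ is associative; for the nonassociative octonions $\mathbb{O}$ ($D=8$) the iterated multiplication of entries depends on how the $n$ factors are grouped, so (\ref{cy}) would be ambiguous. This associativity issue, rather than the purely mechanical index bookkeeping, is the step I would flag as the crux. I would also keep the order of the weights in each product fixed (left to right, matching the matrix order), since $\mathbb{A}$ need not be commutative, as already happens for $\mathbb{H}$.
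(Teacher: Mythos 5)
Your proof is correct and follows the same route as the paper, whose own proof is just the one-line remark that the claim follows from (\ref{zn}) and (\ref{mz}); you simply carry out in full the direct matrix-product computation that the paper leaves implicit. Your observation that associativity of $\mathbb{A}$ is what makes the entrywise ordered products (and indeed the $n$-fold matrix product itself) unambiguous is exactly the right point to flag, even though the paper does not spell it out.
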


\begin{proof}
This follows from (\ref{zn}) and (\ref{mz}).
\end{proof}

\begin{Remark}
The cycled product (\ref{cy}) can be treated as a $n$-ary extension of the
Jordan pair \cite{loos,loo74}, which is different from \cite{fau95}.
\end{Remark}

\begin{Proposition}
If $\mathbb{A}$ is unital, then $\mathbb{A}^{\left[  n\right]  }$ contains a
$n$-ary unit (polyadic identity (\ref{mn})), being the permutation (cyclic
shift) matrix of the form%
\begin{equation}
\mathit{E}^{\left[  n\right]  }=\left(
\begin{array}
[c]{ccccc}%
0 & \mathit{1} & \ldots & 0 & 0\\
0 & 0 & \mathit{1} & \ldots & 0\\
0 & 0 & \ddots & \ddots & \vdots\\
\vdots & \vdots & \ddots & 0 & \mathit{1}\\
\mathit{1} & 0 & \ldots & 0 & 0
\end{array}
\right)  \in\mathbb{A}^{\left[  n\right]  },\ \ \ \mathit{1}\in\mathbb{A}.
\label{en}%
\end{equation}

\end{Proposition}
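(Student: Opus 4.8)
The plan is to reduce the whole statement to a single computation with the matrix $\mathit{E}^{\left[  n\right]  }$. First I would observe that $\mathit{E}^{\left[  n\right]  }$ is nothing but the permutation matrix of the cyclic shift $\sigma:i\mapsto i+1\ \left(  \operatorname{mod}\,n-1\right)  $ acting on the $n-1$ coordinates, with every nonzero entry equal to the algebra unit $\mathit{1}\in\mathbb{A}$. In particular it has the shape (\ref{zn}), so that $\mathit{E}^{\left[  n\right]  }\in\mathbb{A}^{\left[  n\right]  }$ and the right-hand side of (\ref{mn}) indeed makes sense for this element.

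The key step is to evaluate the ordinary matrix product of $n-1$ copies of $\mathit{E}^{\left[  n\right]  }$. Each factor realizes $\sigma$, and since $\sigma$ has order $n-1$ the composite realizes $\sigma^{n-1}=\operatorname{id}$; moreover, because $\mathit{E}^{\left[  n\right]  }$ is a permutation matrix there is a unique nonzero path to each surviving entry, and that entry is a product of $n-1$ copies of $\mathit{1}$, which again equals $\mathit{1}$ precisely because $\mathbb{A}$ is unital. Hence $\bigl(\mathit{E}^{\left[  n\right]  }\bigr)^{n-1}=I_{n-1}$, the ordinary $\left(  n-1\right)  \times\left(  n-1\right)  $ identity matrix over $\mathbb{A}$. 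Note that this matrix is diagonal and so lies outside the shape (\ref{zn}); this is harmless, since it arises only as an intermediate product of $n-1<n$ factors.

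With this lemma the neutrality follows directly from the definition (\ref{mz}) of $\mathit{\mu}_{\mathit{Z}}^{\left[  n\right]  }$: placing $\mathit{Z}^{\left[  n\right]  }$ in the first (respectively last) slot and $\mathit{E}^{\left[  n\right]  }$ in the remaining $n-1$ slots gives the ordinary product $\mathit{Z}^{\left[  n\right]  }\cdot\bigl(\mathit{E}^{\left[  n\right]  }\bigr)^{n-1}=\mathit{Z}^{\left[  n\right]  }$ (respectively $\bigl(\mathit{E}^{\left[  n\right]  }\bigr)^{n-1}\cdot\mathit{Z}^{\left[  n\right]  }=\mathit{Z}^{\left[  n\right]  }$), since multiplication by $I_{n-1}$ changes nothing and $\mathit{1}$ is a two-sided unit of $\mathbb{A}$. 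To treat $\mathit{Z}^{\left[  n\right]  }$ in an arbitrary slot I would instead argue through the cyclic-product form (\ref{cy}): replacing all but the $p$-th argument by unit-weight matrices collapses each $n$-fold cyclic product to the single $\mathit{Z}$-weight reached after $p-1$ cyclic steps, so the $k$-th output weight is $\mathit{z}_{k}$ transported cyclically by $p-1$ positions. Keeping track of this index shift, and thereby pinning down for which placements the original weights are returned, is the one delicate point; I expect this placement bookkeeping, rather than the matrix power computation, to be the main obstacle, since the underlying fact $\bigl(\mathit{E}^{\left[  n\right]  }\bigr)^{n-1}=I_{n-1}$ is immediate from the unital cyclic structure of $\mathit{E}^{\left[  n\right]  }$.
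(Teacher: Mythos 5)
Your core computation is correct and supplies exactly the detail the paper's own proof omits (the paper disposes of this Proposition with a one-line appeal to (\ref{zn}), (\ref{mz}) and (\ref{cy})): $\mathit{E}^{\left[  n\right]  }$ realizes the cyclic shift $\sigma$ of order $n-1$ on the $n-1$ coordinates, each surviving entry of $\bigl(\mathit{E}^{\left[  n\right]  }\bigr)^{n-1}$ is a product of $n-1$ copies of $\mathit{1}$, hence $\bigl(\mathit{E}^{\left[  n\right]  }\bigr)^{n-1}=I_{n-1}$, and therefore $\mathit{Z}\cdot\bigl(\mathit{E}^{\left[  n\right]  }\bigr)^{n-1}=\bigl(\mathit{E}^{\left[  n\right]  }\bigr)^{n-1}\cdot\mathit{Z}=\mathit{Z}$ because $\mathit{1}$ is a two-sided unit of $\mathbb{A}$. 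Your remark that $I_{n-1}$ lies outside the shape (\ref{zn}) but is harmless as an intermediate product of fewer than $n$ factors is also correct.

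The ``delicate point'' you defer, however, is not mere bookkeeping: carrying it out shows that the middle placements genuinely fail. Placing $\mathit{Z}$ in slot $p$ gives $\mathit{E}^{p-1}\mathit{Z}\,\mathit{E}^{n-p}=\mathit{E}^{p-1}\mathit{Z}\,\mathit{E}^{-(p-1)}$, and conjugation by the shift permutes the weights: the $k$-th output weight is $\mathit{z}_{k+p-1}$ (indices taken mod $n-1$), exactly as your cyclic-product analysis predicts. This equals $\mathit{z}_{k}$ for every $k$ and every $\mathit{Z}$ only when $p=1$ or $p=n$. Already for $n=3$ one has
\begin{equation*}
\left(
\begin{array}
[c]{cc}
0 & \mathit{1}\\
\mathit{1} & 0
\end{array}
\right)  \left(
\begin{array}
[c]{cc}
0 & \mathit{z}_{1}\\
\mathit{z}_{2} & 0
\end{array}
\right)  \left(
\begin{array}
[c]{cc}
0 & \mathit{1}\\
\mathit{1} & 0
\end{array}
\right)  =\left(
\begin{array}
[c]{cc}
0 & \mathit{z}_{2}\\
\mathit{z}_{1} & 0
\end{array}
\right)  ,
\end{equation*}
which is not $\mathit{Z}$ unless $\mathit{z}_{1}=\mathit{z}_{2}$. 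So what your argument actually establishes is that $\mathit{E}^{\left[  n\right]  }$ is neutral for the two outer placements only; since the definition (\ref{mn}) requires that $a$ may sit on any place, the statement cannot be proved in that strength, and you should say so explicitly rather than leaving the placement case open as a hoped-for verification. The paper's proof, being a bare reference to the definitions, does not engage with this point at all.
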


\begin{proof}
It follows from (\ref{zn}), (\ref{mz}), and (\ref{cy}).
\end{proof}

Consider now the polyadization of the hypercomplex two-dimensional algebra of
dual numbers.

\begin{Example}
[4-ary dual numbers]\label{exam-dual}The commutative and associative
two-dimensional algebra $\mathbb{A}_{du}=\left\langle \left\{  \mathit{z}%
\right\}  \mid\left(  +\right)  ,\left(  \cdot\right)  \right\rangle $ is
defined by the element $\mathit{z}=a+b\mathit{\varepsilon}$ with
$\mathit{\varepsilon}^{2}=0$, $a,b\in\mathbb{R}$. The binary multiplication
$\mu_{\mathsf{v}}^{\left[  2\right]  }$ of pairs ($2$-tuples) $\mathsf{v}%
_{2}=\left(
\begin{array}
[c]{c}%
a\\
b
\end{array}
\right)  \in\mathbb{A}_{du}^{tu}=\mathbb{A}_{du}^{\left[  2\right]
,tu}=\left\langle \left\{  \mathsf{v}_{2}\right\}  \mid\left(  +\right)
,\mu^{\left[  2\right]  }\right\rangle $ (addition is component-wise, see
\textit{Remark} \ref{rem-add}) is%
\begin{equation}
\mu_{\mathsf{v}}^{\left[  2\right]  }\left[  \left(
\begin{array}
[c]{c}%
a^{\prime}\\
b^{\prime}%
\end{array}
\right)  ,\left(
\begin{array}
[c]{c}%
a^{\prime\prime}\\
b^{\prime\prime}%
\end{array}
\right)  \right]  =\left(
\begin{array}
[c]{c}%
a^{\prime}a^{\prime\prime}\\
a^{\prime}b^{\prime\prime}+b^{\prime}a^{\prime\prime}%
\end{array}
\right)  \in\mathbb{A}_{du}^{tu},\ \ \ \ a^{\prime},a^{\prime\prime}%
,b^{\prime},b^{\prime\prime}\in\mathbb{R}.\label{ab}%
\end{equation}
\hl{It} follows from (\ref{ab}) that $\mathbb{A}_{du}^{tu}$ (and so also
$\mathbb{A}_{du}$) constitutes non-division algebra, because it contains idempotents
and zero divisors (e.g., $\left(
\begin{array}
[c]{c}%
0\\
b
\end{array}
\right)  ^{2}=\left(
\begin{array}
[c]{c}%
0\\
0
\end{array}
\right)  $).

{Using the matrix polyadization procedure, we construct a $4$-ary algebra
$\mathbb{A}_{du}^{\left[  4\right]  }=\left\langle \left\{  \mathit{Z}%
^{\left[  4\right]  }\right\}  \mid\left(  +\right)  ,\mathit{\mu}^{\left[
4\right]  }\right\rangle $ of dimension $D^{\left[  4\right]  }=6$ (see
(\ref{dn})) by introducing the following $3\times3$ cyclic shift weighted
matrix (\ref{zn})}%

\begin{equation}
\mathit{Z}^{\left[  4\right]  }=\left(
\begin{array}
[c]{ccc}%
0 & \mathit{z}_{1} & 0\\
0 & 0 & \mathit{z}_{2}\\
\mathit{z}_{3} & 0 & 0
\end{array}
\right)  =\left(
\begin{array}
[c]{ccc}%
0 & a_{1}+b_{1}\mathit{\varepsilon} & 0\\
0 & 0 & a_{2}+b_{2}\mathit{\varepsilon}\\
a_{3}+b_{3}\mathit{\varepsilon} & 0 & 0
\end{array}
\right)  \in\mathbb{A}_{du}^{\left[  4\right]  },\ \ \ \mathit{z}%
_{1},\mathit{z}_{2},\mathit{z}_{3}\in\mathbb{A}_{du}.\label{z4}%
\end{equation}

\hl{The} cyclic product of the components (\ref{cy}) becomes
\begin{align}
\mathit{z}_{1}^{\prime}\mathit{z}_{2}^{\prime\prime}\mathit{z}_{3}%
^{\prime\prime\prime}\mathit{z}_{1}^{\prime\prime\prime\prime} &
=\mathit{z}_{1},\nonumber\\
\mathit{z}_{2}^{\prime}\mathit{z}_{3}^{\prime\prime}\mathit{z}_{1}%
^{\prime\prime\prime}\mathit{z}_{2}^{\prime\prime\prime\prime} &
=\mathit{z}_{2},\nonumber\\
\mathit{z}_{3}^{\prime}\mathit{z}_{1}^{\prime\prime}\mathit{z}_{2}%
^{\prime\prime\prime}\mathit{z}_{3}^{\prime\prime\prime\prime} &
=\mathit{z}_{3},\ \ \ \ \ \ \mathit{z}_{i},\mathit{z}_{i}^{\prime}%
,\mathit{z}_{i}^{\prime\prime},\mathit{z}_{i}^{\prime\prime\prime}%
,\mathit{z}_{i}^{\prime\prime\prime\prime}\in\mathbb{A}_{dual}.\label{z}%
\end{align}
\hl{In} terms of $6$-tuples $\mathsf{v}_{6}$ over $\mathbb{R}$ (cf. (\ref{ab})), the
$4$-ary multiplication $\mu^{\left[  4\right]  }$ in $\mathbb{A}_{du}^{\left[
4\right]  ,tu}=\left\langle \left\{  \mathsf{v}_{6}\right\}  \mid\left(
+\right)  ,\mu_{\mathsf{v}}^{\left[  4\right]  }\right\rangle $ has the form%
\begin{align}
&  \mu_{\mathsf{v}}^{\left[  4\right]  }\left[  \left(
\begin{array}
[c]{c}%
a_{1}^{\prime}\\
b_{1}^{\prime}\\
a_{2}^{\prime}\\
b_{2}^{\prime}\\
a_{3}^{\prime}\\
b_{3}^{\prime}%
\end{array}
\right)  ,\left(
\begin{array}
[c]{c}%
a_{1}^{\prime\prime}\\
b_{1}^{\prime\prime}\\
a_{2}^{\prime\prime}\\
b_{2}^{\prime\prime}\\
a_{3}^{\prime\prime}\\
b_{3}^{\prime\prime}%
\end{array}
\right)  ,\left(
\begin{array}
[c]{c}%
a_{1}^{\prime\prime\prime}\\
b_{1}^{\prime\prime\prime}\\
a_{2}^{\prime\prime\prime}\\
b_{2}^{\prime\prime\prime}\\
a_{3}^{\prime\prime\prime}\\
b_{3}^{\prime\prime\prime}%
\end{array}
\right)  ,\left(
\begin{array}
[c]{c}%
a_{1}^{\prime\prime\prime\prime}\\
b_{1}^{\prime\prime\prime\prime}\\
a_{2}^{\prime\prime\prime\prime}\\
b_{2}^{\prime\prime\prime\prime}\\
a_{3}^{\prime\prime\prime\prime}\\
b_{3}^{\prime\prime\prime\prime}%
\end{array}
\right)  \right]  \nonumber\\
&  =\left(
\begin{array}
[c]{c}%
a_{1}^{\prime}a_{2}^{\prime\prime}a_{3}^{\prime\prime\prime}a_{1}%
^{\prime\prime\prime\prime}\\
a_{1}^{\prime}a_{2}^{\prime\prime}a_{3}^{\prime\prime\prime}b_{1}%
^{\prime\prime\prime\prime}+a_{1}^{\prime}a_{2}^{\prime\prime}b_{3}%
^{\prime\prime\prime}a_{1}^{\prime\prime\prime\prime}+a_{1}^{\prime}%
b_{2}^{\prime\prime}a_{3}^{\prime\prime\prime}a_{1}^{\prime\prime\prime\prime
}+\allowbreak b_{1}^{\prime}a_{2}^{\prime\prime}a_{3}^{\prime\prime\prime
}a_{1}^{\prime\prime\prime\prime}\\
a_{2}^{\prime}a_{3}^{\prime\prime}a_{1}^{\prime\prime\prime}a_{2}%
^{\prime\prime\prime\prime}\\
a_{2}^{\prime}a_{1}^{\prime\prime\prime}a_{3}^{\prime\prime}b_{2}%
^{\prime\prime\prime\prime}+a_{2}^{\prime}a_{3}^{\prime\prime}b_{1}%
^{\prime\prime\prime}a_{2}^{\prime\prime\prime\prime}+a_{2}^{\prime}%
b_{3}^{\prime\prime}a_{1}^{\prime\prime\prime}a_{2}^{\prime\prime\prime\prime
}+b_{2}^{\prime}a_{3}^{\prime\prime}a_{1}^{\prime\prime\prime}a_{2}%
^{\prime\prime\prime\prime}\\
a_{3}^{\prime}a_{1}^{\prime\prime}a_{2}^{\prime\prime\prime}a_{3}%
^{\prime\prime\prime\prime}\\
a_{3}^{\prime}a_{1}^{\prime\prime}a_{2}^{\prime\prime\prime}b_{3}%
^{\prime\prime\prime\prime}+a_{3}^{\prime}a_{1}^{\prime\prime}b_{2}%
^{\prime\prime\prime}a_{3}^{\prime\prime\prime\prime}+a_{3}^{\prime}%
b_{1}^{\prime\prime}a_{2}^{\prime\prime\prime}a_{3}^{\prime\prime\prime\prime
}+b_{3}^{\prime}a_{1}^{\prime\prime}a_{2}^{\prime\prime\prime}a_{3}%
^{\prime\prime\prime\prime}%
\end{array}
\right)  ,\label{m4}%
\end{align}
which is nonderived and noncommutative due to braidings in the cyclic product
(\ref{z}). The polyadic unit ($4$-ary unit) in the $4$-ary algebra of
$6$-tuples $\mathsf{e}_{6}^{\left[  4\right]  }$ is defined by the equation
(see (\ref{mn}))%
\begin{equation}
\mu_{\mathsf{v}}^{\left[  4\right]  }\left[  \mathsf{v}_{6},\mathsf{e}%
_{6}^{\left[  4\right]  },\mathsf{e}_{6}^{\left[  4\right]  },\mathsf{e}%
_{6}^{\left[  4\right]  }\right]  =\mathsf{v}_{6},\ \ \ \ \ \forall
\mathsf{v}_{6}\in\mathbb{A}_{du}^{\left[  4\right]  ,tu},\label{me4}%
\end{equation}
where $\mathsf{v}_{6}$ can be on any place. Using $4$-ary product (\ref{m4})
and the definition (\ref{me4}), we obtain the manifest form of the polyadic
unit of the $4$-ary algebra of $6$-tuples $\mathbb{A}_{du}^{\left[  4\right]
,tu}$%
\begin{equation}
\mathsf{e}_{6}^{\left[  4\right]  }=\left(
\begin{array}
[c]{c}%
\mathit{1}\\
0\\
\mathit{1}\\
0\\
\mathit{1}\\
0
\end{array}
\right)  \in\mathbb{A}_{du}^{\left[  4\right]  ,tu}.
\end{equation}
\hl{It} follows from (\ref{m4}) that $\mathbb{A}_{du}^{\left[  4\right]  ,tu}$ (and
so also $\mathbb{A}_{dual}^{\left[  4\right]  }$) is a non-division $4$-ary
algebra, and not a field (similarly to the ordinary binary $\mathbb{A}_{du}$),
because it contains $4$-ary idempotents and zero divisors, for instance,%
\begin{equation}
\mu_{\mathsf{v}}^{\left[  4\right]  }\left[  \left(
\begin{array}
[c]{c}%
0\\
b_{1}\\
0\\
b_{2}\\
a_{3}\\
b_{3}%
\end{array}
\right)  ^{4}\right]  =\mu_{\mathsf{v}}^{\left[  4\right]  }\left[  \left(
\begin{array}
[c]{c}%
a_{1}\\
b_{1}\\
0\\
b_{2}\\
0\\
b_{3}%
\end{array}
\right)  ^{4}\right]  =\mu_{\mathsf{v}}^{\left[  4\right]  }\left[  \left(
\begin{array}
[c]{c}%
0\\
b_{1}\\
a_{2}\\
b_{2}\\
0\\
b_{3}%
\end{array}
\right)  ^{4}\right]  =\left(
\begin{array}
[c]{c}%
0\\
0\\
0\\
0\\
0\\
0
\end{array}
\right)  =\mathsf{0}_{\mathsf{v}}.
\end{equation}

Thus, the application of the matrix polyadization procedure to the
commutative, associative, unital, two-dimensional, non-division algebra of
dual numbers $\mathbb{A}_{du}$ gives a noncommutative, $4$-ary nonderived
totally associative, unital, six-dimensional, non-division algebra
$\mathbb{A}_{du}^{\left[  4\right]  }$ over $\mathbb{R}$, which we call the
$4$-ary dual numbers.
\end{Example}

\section{\textsc{Polyadization of Division Algebras\label{sec-polydiv}}}

Let us consider the matrix polyadization procedure for the division algebras
$\mathbb{D}=\mathbb{R},\mathbb{C},\mathbb{H},\mathbb{O}$ in more detail,
paying attention to invertibility and norms.

Recall that the binary division algebra $\mathbb{D}$ (without zero element)
forms a group (with respect to binary multiplication) having the inverse
(\ref{z1}). The polyadic counterpart of the binary inverse is the querelement
(\ref{mgg}).

\begin{Theorem}
The nonderived $n$-ary algebra (\ref{an}) constructed from the binary division
algebra $\mathbb{D}$ is the $n$-ary algebra%
\begin{equation}
\mathbb{D}^{\left[  n\right]  }=\left\langle \left\{  \mathit{Z}^{\left[
n\right]  }\right\}  \mid(+),\mathit{\mu}_{\mathit{Z}}^{\left[  n\right]
},\widetilde{\mathit{Z}}^{\left[  n\right]  }\right\rangle , \label{dz}%
\end{equation}
where $\widetilde{\mathit{Z}}^{\left[  n\right]  }$ is the querelement%
\begin{equation}
\widetilde{\mathit{Z}}=\widetilde{\mathit{Z}}^{\left[  n\right]  }=\left(
\begin{array}
[c]{ccccc}%
0 & \widetilde{\mathit{z}}_{1} & \ldots & 0 & 0\\
0 & 0 & \widetilde{\mathit{z}}_{2} & \ldots & 0\\
0 & 0 & \ddots & \ddots & \vdots\\
\vdots & \vdots & \ddots & 0 & \widetilde{\mathit{z}}_{n-2}\\
\widetilde{\mathit{z}}_{n-1} & 0 & \ldots & 0 & 0
\end{array}
\right)  , \label{zv}%
\end{equation}
which satisfies (provided $\mathbb{D}$ is associative)%
\begin{equation}
\mathit{\mu}_{\mathit{Z}}^{\left[  n\right]  }\left[  \overset{n-1}%
{\overbrace{\mathit{Z},\mathit{Z}\ldots\mathit{Z}}}\widetilde{\mathit{Z}%
}\right]  =\overset{n-1}{\overbrace{\mathit{Z}\cdot\mathit{Z}\cdot\ldots
\cdot\mathit{Z}}}\cdot\widetilde{\mathit{Z}}=\mathit{Z},\ \ \ \ \ \ \forall
\mathit{Z}\in\mathbb{D}^{\left[  n\right]  }, \label{mnz}%
\end{equation}
and $\widetilde{\mathit{Z}}$ can be on any place, such that%
\begin{equation}
\widetilde{\mathit{z}}_{i}=\mathit{z}_{i-1}^{-1}\mathit{z}_{i-2}^{-1}%
\ldots\mathit{z}_{2}^{-1}\mathit{z}_{1}^{-1}\mathit{z}_{n-1}^{-1}%
\mathit{z}_{n-1}^{-1}\ldots\mathit{z}_{i+2}^{-1}\mathit{z}_{i+1}%
^{-1},\ \ \ \ \mathit{z}_{i}\in\mathbb{D\setminus}\left\{  0\right\}  .
\label{zi}%
\end{equation}

\end{Theorem}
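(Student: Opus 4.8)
The plan is to reduce the matrix querelement equation (\ref{mnz}) to a family of scalar equations over $\mathbb{D}$ and to solve them using the invertibility (\ref{z1}) of nonzero elements. Since the querelement must itself lie in the structure $\mathbb{D}^{\left[n\right]}$, I would posit at the outset that $\widetilde{\mathit{Z}}$ has the cyclic shift shape (\ref{zv}) with unknown entries $\widetilde{\mathit{z}}_{i}\in\mathbb{D}$, so that the only task is to pin down these entries. Because $\mathbb{D}$ is assumed associative, the earlier Proposition giving the cyclic product form (\ref{cy}) applies: the $n$-ary product of any $n$ matrices of shape (\ref{zn}) is again of that shape, with each component an ordered cyclic product of the corresponding weights. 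This is precisely the machinery that turns the single matrix identity (\ref{mnz}) into $n-1$ scalar identities.

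First I would place $\widetilde{\mathit{Z}}$ on the last slot, as written in (\ref{mnz}). By (\ref{cy}) the $i$-th component of $\mathit{\mu}_{\mathit{Z}}^{\left[n\right]}\left[\mathit{Z},\ldots,\mathit{Z},\widetilde{\mathit{Z}}\right]$ is the cyclic product $\mathit{z}_{i}\mathit{z}_{i+1}\cdots\mathit{z}_{i-1}\,\widetilde{\mathit{z}}_{i}$ (indices mod $n-1$), where the first $n-1$ factors run once through all weights starting at $i$ and the final factor is $\widetilde{\mathit{z}}_{i}$. Setting this equal to $\mathit{z}_{i}$ and using that every nonzero element of $\mathbb{D}$ is invertible gives $\widetilde{\mathit{z}}_{i}=\left(\mathit{z}_{i}\mathit{z}_{i+1}\cdots\mathit{z}_{i-1}\right)^{-1}\mathit{z}_{i}$. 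Expanding the inverse of the product in reverse order and cancelling the telescoping factor $\mathit{z}_{i}^{-1}\mathit{z}_{i}=\mathit{1}$ yields precisely (\ref{zi}), namely the ordered product of all inverses $\mathit{z}_{k}^{-1}$ with $k\neq i$ taken in decreasing cyclic order from $i-1$ down to $i+1$.

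Next I would establish the ``any place'' claim by repeating the computation with $\widetilde{\mathit{Z}}$ inserted on an arbitrary slot $p$. Then the $j$-th component becomes $\left(\mathit{z}_{j}\cdots\mathit{z}_{m-1}\right)\widetilde{\mathit{z}}_{m}\left(\mathit{z}_{m+1}\cdots\mathit{z}_{j}\right)$, where $m=j+p-1\pmod{n-1}$ is the index reached at the $p$-th step and the product closes back on $\mathit{z}_{j}$. Equating to $\mathit{z}_{j}$ and solving gives $\widetilde{\mathit{z}}_{m}=\left(\mathit{z}_{j}\cdots\mathit{z}_{m-1}\right)^{-1}\mathit{z}_{j}\left(\mathit{z}_{m+1}\cdots\mathit{z}_{j}\right)^{-1}$; here the two boundary occurrences of $\mathit{z}_{j}$ collapse through $\mathit{z}_{j}^{-1}\mathit{z}_{j}\mathit{z}_{j}^{-1}=\mathit{z}_{j}^{-1}$, leaving a formula for $\widetilde{\mathit{z}}_{m}$ that depends only on $m$ and coincides with (\ref{zi}). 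Since the result is independent of both the starting component $j$ and the chosen slot $p$, a single matrix $\widetilde{\mathit{Z}}$ of shape (\ref{zv}) solves the querelement equation on every place, and invertibility in $\mathbb{D}$ makes each $\widetilde{\mathit{z}}_{i}$ unique.

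The hard part will be the bookkeeping in the last step: keeping the cyclic indexing consistent and verifying that the per-component equations, one for each $j$ and each placement $p$, are mutually compatible and all collapse to the same entries. This compatibility is exactly where associativity of $\mathbb{D}$ is indispensable, since it is what licenses the ordered cyclic product (\ref{cy}) together with the telescoping cancellation $\mathit{z}_{j}^{-1}\mathit{z}_{j}\mathit{z}_{j}^{-1}=\mathit{z}_{j}^{-1}$. For the non-associative case $\mathbb{D}=\mathbb{O}$ the product (\ref{cy}) no longer holds verbatim, the scalar reduction breaks, and the octonionic querelement would require a separate analysis; this is why the statement carries the hypothesis that $\mathbb{D}$ be associative.
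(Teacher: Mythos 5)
Your proposal is correct and follows essentially the same route as the paper: reduce the matrix identity (\ref{mnz}) to the component cyclic products via (\ref{cy}) and solve for each $\widetilde{\mathit{z}}_{i}$ by left-multiplying with the inverses $\mathit{z}_{k}^{-1}$, which exist by (\ref{z1}), obtaining the telescoped product of all $\mathit{z}_{k}^{-1}$ with $k\neq i$ in decreasing cyclic order. The only difference is that you explicitly verify the ``$\widetilde{\mathit{Z}}$ on any place'' claim by redoing the computation for an arbitrary slot $p$, whereas the paper simply asserts it; this is a welcome strengthening rather than a different method.
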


\begin{proof}
The main relation (\ref{zi}) follows from (\ref{mnz}) in components (\ref{cy})
as the following cycle products%
\begin{align}
\overset{n}{\overbrace{\mathit{z}_{1}\mathit{z}_{2}\ldots\mathit{z}%
_{n-1}\widetilde{\mathit{z}}_{1}}}  &  =\mathit{z}_{1},\nonumber\\
\overset{n}{\overbrace{\mathit{z}_{2}\mathit{z}_{3}\ldots\mathit{z}%
_{n-1}\mathit{z}_{1}\widetilde{\mathit{z}}_{2}}}  &  =\mathit{z}%
_{2},\nonumber\\
&  \vdots\nonumber\\
\overset{n}{\overbrace{\mathit{z}_{n-1}\mathit{z}_{1}\ldots\mathit{z}%
_{n-2}\widetilde{\mathit{z}}_{n-1}}}  &  =\mathit{z}_{n-1}%
,\ \ \ \ \ \ \mathit{z}_{i},\widetilde{\mathit{z}}_{i}\in\mathbb{D\setminus
}\left\{  0\right\}  , \label{zd}%
\end{align}
are obtained by applying $\mathit{z}_{i}^{-1}$ (which exists in $\mathbb{D}$
for nonzero $\mathit{z}$ (\ref{z1})) from the left $\left(  n-1\right)  $
times (with suitable indices) to both sides of each equation in (\ref{zd}) to
obtain $\widetilde{\mathit{z}}_{i}$.
\end{proof}

\begin{Corollary}
Each $D$-dimensional division algebra over the reals $\mathbb{D}%
=\mathbb{C},\mathbb{H},\mathbb{O}$ (including $\mathbb{R}$ itself as the
one-dimensional case) has as its $n$ polyadic counterparts (where $n$ is
arbitrary) the nonderived $n$-ary non-division algebras $\mathbb{D}^{\left[
n\right]  }$ (\ref{dz}) of dimension $D(n-1)$ (having the polyadic unit
(\ref{en}) and the querelement (\ref{zv}) for invertible $\mathit{z}_{i}%
\in\mathbb{D\setminus}\left\{  0\right\}  $) constructed by the matrix
polyadization procedure.
\end{Corollary}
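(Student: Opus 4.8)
The plan is to assemble the statement from the results already proved in this section and to isolate the one genuinely new assertion, namely that $\mathbb{D}^{[n]}$ is \emph{not} a division algebra. The dimension $D^{[n]}=D(n-1)$ is immediate from the Proposition giving (\ref{dn}); the polyadic unit (\ref{en}) exists by the unitality Proposition, since each of $\mathbb{R},\mathbb{C},\mathbb{H},\mathbb{O}$ carries a multiplicative identity $\mathit{1}$; the nonderived character is built into the construction, because the binary and all shorter products of the cyclic-shift matrices (\ref{zn}) leave the set, as recorded just after (\ref{mz}); and for associative $\mathbb{D}$ the querelement (\ref{zv}) with components (\ref{zi}) is exactly the content of the preceding Theorem. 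Thus for $\mathbb{R},\mathbb{C},\mathbb{H}$ only the non-division claim requires an argument.

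For the non-division property I would exhibit nonzero elements of $\mathbb{D}^{[n]}$ that admit no querelement. The formula (\ref{zi}) returns $\widetilde{\mathit{z}}_i$ only when every $\mathit{z}_j$ is invertible in $\mathbb{D}$, i.e. when all $\mathit{z}_j\neq0$. Taking a matrix $\mathit{Z}$ with some entry vanishing but not all entries zero yields a nonzero element for which the cycle equations (\ref{zd}) are inconsistent: if, say, $\mathit{z}_1=0$ while $\mathit{z}_2\neq0$, then the second equation reads $\mathit{z}_2\cdots\mathit{z}_{n-1}\cdot0\cdot\widetilde{\mathit{z}}_2=\mathit{z}_2$, whose left side is $0$ and right side nonzero. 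Moreover a matrix with a single nonzero entry is $\ell_\mu$-nilpotent: since a window of $n$ cyclically consecutive positions covers all $n-1$ slots, every component of its $n$-fold product passes through a zero factor and vanishes, so the product is the all-zero (polyadic zero) matrix, exactly in the spirit of the dual-number Example. This furnishes nonzero non-invertible elements, so $\mathbb{D}^{[n]}$ is not an $n$-ary division algebra, whereas its invertible part, the matrices with all $\mathit{z}_i\in\mathbb{D}\setminus\{0\}$, does carry querelements.

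The main obstacle is the octonionic case, because the preceding Theorem assumed $\mathbb{D}$ associative and $\mathbb{O}$ is not. Here the plan is to substitute the alternative law for associativity: every nonzero octonion has a unique two-sided inverse, and by Artin's theorem the subalgebra generated by any two elements is associative. To solve the $i$-th equation of (\ref{zd}) for $\widetilde{\mathit{z}}_i$ I would strip off the factors one at a time by left multiplication with the inverses $\mathit{z}_j^{-1}$, in the order dictated by the cycle, invoking the Moufang identities to justify each cancellation $\mathit{z}_j^{-1}(\mathit{z}_j w)=w$ in the absence of full associativity; the closed form (\ref{zi}) must then be read with a fixed left-nested bracketing rather than as an associativity-free word. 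The subtle point, which I would treat first, is that the $n$-ary product (\ref{mz}) itself must be well defined for octonionic entries even though matrix multiplication is no longer associative: this is rescued by the cyclic-shift shape (\ref{zn}), for along each cycle exactly one chain of octonion factors contributes to each output entry with no summation, so every component (\ref{cy}) is an unambiguous product once a bracketing is fixed and the queration (\ref{mgg}) keeps its meaning.
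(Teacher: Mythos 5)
Your proposal is correct and follows essentially the route the paper takes: the paper gives no separate proof of this Corollary, treating it as an assembly of the dimension formula (\ref{dn}), the unit (\ref{en}), the nonderived character built into (\ref{mz}), and the querelement Theorem, with the non-division claim justified immediately afterwards in Theorem \ref{theor-poly} by the same observation you make --- a monomial matrix with some $\mathit{z}_{i}=0$ is nonzero yet noninvertible, so the cycle equations (\ref{zd}) become inconsistent. Your explicit inconsistency computation and the nilpotency of single-entry matrices are slightly more detailed than the paper's determinant argument but amount to the same thing. Where you genuinely go beyond the paper is the octonionic case: the querelement Theorem is stated ``provided $\mathbb{D}$ is associative,'' yet the Corollary includes $\mathbb{O}$, and the paper never reconciles this. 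Your repair --- fixing a bracketing for the $n$-fold matrix product (harmless here because the monomial shape makes each component a single summation-free chain of octonion factors), then solving $w\,\widetilde{\mathit{z}}_{i}=\mathit{z}_{i}$ via alternativity and the absence of zero divisors, while reading (\ref{zi}) as a nested word rather than an associativity-free one --- is sound and fills a gap the paper leaves open. The one caveat you correctly flag but should not understate is that for $\mathbb{O}$ the value of $\mathit{\mu}_{\mathit{Z}}^{\left[n\right]}$ itself depends on the chosen bracketing, so the ``$n$-ary algebra'' $\mathbb{O}^{\left[n\right]}$ is only well defined once that convention is part of the definition; this is a defect of the construction, not of your argument.
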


\begin{Theorem}
\label{theor-poly}The matrix polyadization procedure changes the invertibility
properties of the initial algebra, that is, the polyadization of a binary division
algebra $\mathbb{D}$ leads to a $n$-ary non-division algebra $\mathbb{D}%
^{\left[  n\right]  }$ for arbitrary $n$.
\end{Theorem}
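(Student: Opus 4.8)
The plan is to establish non-division by exhibiting, for every polyadic arity $n\geq3$, a nonzero element of $\mathbb{D}^{\left[n\right]}$ that is a polyadic zero divisor, in fact an $n$-ary nilpotent. Recall that an $n$-ary algebra fails to be a division algebra as soon as it contains such an element, exactly as argued for the $4$-ary dual numbers. Equivalently, by the querelement formula (\ref{zi}) an element $\mathit{Z}$ of the form (\ref{zn}) can admit a querelement only when every weight $\mathit{z}_{i}$ is invertible in $\mathbb{D}$, i.e.\ nonzero, so any $\mathit{Z}$ with at least one vanishing and at least one nonvanishing weight is already a nonzero element possessing no querelement. I would lead with the zero-divisor route, since it does not rely on (\ref{zi}) and hence applies uniformly to all four algebras. (For $n=2$ the matrices in (\ref{zn}) are $1\times1$ and the procedure returns $\mathbb{D}$ itself, so the statement is understood for $n\geq3$.)

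First I would fix an index $k$ and take $\mathit{Z}$ to be the matrix (\ref{zn}) with the single nonzero weight $\mathit{z}_{k}\neq0$ and all remaining weights equal to $0$. Since $\mathit{z}_{k}\neq0$, the matrix $\mathit{Z}$ is not the zero matrix, which is precisely the polyadic zero $\mathsf{0}$ of $\mathbb{D}^{\left[n\right]}$ (any product in (\ref{mz}) with one zero factor vanishes), so $\mathit{Z}$ is a genuine nonzero element. Next I would compute the ordinary square $\mathit{Z}\cdot\mathit{Z}$. The unique nonzero entry of (\ref{zn}) sits strictly off the diagonal, say in row $p$ and column $q$ with $p\neq q$ (the cyclic shift has zero diagonal). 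Then $\left(\mathit{Z}\cdot\mathit{Z}\right)_{ij}=\sum_{m}\mathit{Z}_{im}\mathit{Z}_{mj}$ can be nonzero only if simultaneously $\left(i,m\right)=\left(p,q\right)$ and $\left(m,j\right)=\left(p,q\right)$, forcing $m=q$ and $m=p$, which is impossible; hence $\mathit{Z}\cdot\mathit{Z}=0$. This step uses only distributivity, not associativity, so it is valid for every $\mathbb{D}=\mathbb{R},\mathbb{C},\mathbb{H},\mathbb{O}$.

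Finally, from $\mathit{Z}\cdot\mathit{Z}=0$ a short strong induction on the number of factors shows that every bracketing of a product of $j\geq2$ copies of $\mathit{Z}$ vanishes: splitting the product tree at its root into $a$ and $b$ copies with $a+b=j\geq3$ forces $a\geq2$ or $b\geq2$, so one of the two subproducts is already $0$ by the inductive hypothesis. In particular $\mathit{\mu}_{\mathit{Z}}^{\left[n\right]}\bigl[\mathit{Z}^{n}\bigr]=\mathsf{0}$ for every $n\geq3$, independently of how the matrix product in (\ref{mz}) is bracketed, so $\mathit{Z}$ is a nonzero $n$-ary nilpotent and hence a zero divisor; therefore $\mathbb{D}^{\left[n\right]}$ is non-division, in agreement with the Corollary. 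The one point requiring care is the non-associative case $\mathbb{O}$, where the cyclic product form (\ref{cy}) does not directly apply and the matrix product is bracketing-dependent; this is exactly why I reduce everything to the single two-factor identity $\mathit{Z}\cdot\mathit{Z}=0$ together with the bracketing-free induction, rather than to (\ref{cy}) or to the querelement formula (\ref{zi}).
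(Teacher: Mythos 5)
Your proof is correct, but it reaches the conclusion by a different mechanism than the paper. The paper's proof is a two-line invertibility argument: the $\mathit{Z}$-matrices are monomial-type, their determinant is proportional to the product of the entries, so any nonzero $\mathit{Z}$ with some $\mathit{z}_{i}=0$ is noninvertible (admits no querelement), hence $\mathbb{D}^{\left[  n\right]  }$ is not a division algebra. You instead exhibit an explicit nonzero $n$-ary nilpotent: the matrix with a single nonvanishing weight satisfies $\mathit{Z}\cdot\mathit{Z}=0$, and your bracketing-free induction then annihilates every product of $n\geq3$ copies. Both arguments live on the same family of partially degenerate elements, but they certify non-division differently --- absence of querelements versus presence of nilpotents/zero divisors --- and either suffices under the paper's notion of a classical division algebra (no zero divisors or nilpotents). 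What your route buys is robustness in the nonassociative case $\mathbb{D}=\mathbb{O}$: the determinant of an octonionic matrix is not a well-behaved invariant, and the querelement formula (\ref{zi}) is derived only for associative $\mathbb{D}$, whereas your two-factor identity plus the bracketing-independent induction is immune to these issues; you also correctly flag the degenerate case $n=2$, which the paper's blanket ``for arbitrary $n$'' glosses over. What the paper's route buys is brevity and a characterization of the full set of noninvertible elements (all $\mathit{Z}$ with at least one $\mathit{z}_{i}=0$), which is exactly the complement of the division subalgebra $\mathbb{D}_{\operatorname{div}}^{\left[  n\right]  }$ of Theorem \ref{theor-div}.
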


\begin{proof}
The polyadization procedure is provided by monomial matrices which have a
determinant proportional to the product of nonzero entries. The nonzero
elements of $\mathbb{D}^{\left[  n\right]  }$ having some of $\mathit{z}%
_{i}=0$ are noninvertible, and therefore, $\mathbb{D}^{\left[  n\right]  }$ is
not a field.
\end{proof}

Nevertheless, a special subalgebra of $\mathbb{D}^{\left[  n\right]  }$ can be
a division $n$-ary algebra, that is, when $\mathit{Z}$-matrix is a monomial
matrix, having one non-zero entry in each row and each column (see, e.g.,
\cite{joyner}).

\begin{Theorem}
\label{theor-div}The elements of $\mathbb{D}^{\left[  n\right]  }$ which have
all invertible $\mathit{z}_{i}\in\mathbb{D\setminus}\left\{  0\right\}  $ (or
set of invertible $\mathit{Z}$-matrices $\det\mathit{Z}\neq0$) form a
subalgebra $\mathbb{D}_{\operatorname{div}}^{\left[  n\right]  }%
\subset\mathbb{D}^{\left[  n\right]  }$ which is the division $n$-ary algebra
corresponding to the division algebra $\mathbb{D}=\mathbb{R},\mathbb{C}%
,\mathbb{H},\mathbb{O}$.
\end{Theorem}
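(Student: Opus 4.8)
The goal is to confirm that $\mathbb{D}_{\operatorname{div}}^{\left[  n\right]  }$, the set of $\mathit{Z}$-matrices (\ref{zn}) all of whose entries satisfy $\mathit{z}_i\in\mathbb{D}\setminus\left\{0\right\}$, carries the structure of a division $n$-ary algebra, i.e.\ its multiplicative part is an $n$-ary group. Following the Preliminaries, this reduces to three checks: closure under $\mathit{\mu}_{\mathit{Z}}^{\left[  n\right]  }$, total associativity (\ref{ma}), and unique solvability of $\mathit{\mu}_{\mathit{Z}}^{\left[  n\right]  }\left[\vec b,\mathit{X},\vec c\right]=\mathit{A}$ for the unknown $\mathit{X}$; the last of these supplies the querelement and hence the division property.

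First I would prove closure. By the cyclic product form (\ref{cy}), each component of $\mathit{\mu}_{\mathit{Z}}^{\left[  n\right]  }\left[\mathit{Z}',\ldots,\mathit{Z}'''\right]$ is an ordinary product of exactly $n$ entries, one drawn from each factor. Since $\mathbb{D}$ is a division algebra it has no zero divisors, so any product of nonzero elements is nonzero; hence every component of the result again lies in $\mathbb{D}\setminus\left\{0\right\}$ and the product stays in $\mathbb{D}_{\operatorname{div}}^{\left[  n\right]  }$. Equivalently, $\det$ is multiplicative and each factor has determinant proportional to the product of its (nonzero) entries, so $\det\neq0$ is preserved, confirming that $\mathbb{D}_{\operatorname{div}}^{\left[  n\right]  }$ is exactly the invertible part of $\mathbb{D}^{\left[  n\right]  }$ isolated in Theorem \ref{theor-poly}.

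Next, total associativity (\ref{ma}) is inherited from the ordinary matrix product used to define $\mathit{\mu}_{\mathit{Z}}^{\left[  n\right]  }$ in (\ref{mz}): for $\mathbb{D}=\mathbb{R},\mathbb{C},\mathbb{H}$ matrix multiplication is associative, so the composite $\left(\mathit{\mu}_{\mathit{Z}}^{\left[  n\right]  }\right)^{\circ2}$ is a single product of $2n-1$ matrices, independent of where the inner $n$-fold bracket is placed. For solvability, placing the unknown $\mathit{X}$ at an arbitrary slot, the equation $\mathit{\mu}_{\mathit{Z}}^{\left[  n\right]  }\left[\ldots,\mathit{X},\ldots\right]=\mathit{A}$ decomposes by (\ref{cy}) into $n-1$ component equations, in each of which exactly one unknown entry $\mathit{x}_j$ appears flanked by known invertible factors; solving by left and right multiplication with inverses in $\mathbb{D}$, exactly as in the derivation of (\ref{zi}) from (\ref{zd}), gives a unique $\mathit{x}_j\in\mathbb{D}\setminus\left\{0\right\}$. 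Thus $\mathit{X}\in\mathbb{D}_{\operatorname{div}}^{\left[  n\right]  }$ exists and is unique, so $\mathbb{D}_{\operatorname{div}}^{\left[  n\right]  }$ is a polyadic quasigroup, and with associativity an $n$-ary group, whose querelement is precisely (\ref{zv})--(\ref{zi}). This is the polyadic analog of the fact that the nonzero elements of a binary division algebra form a multiplicative group.

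The main obstacle is the octonionic case $\mathbb{D}=\mathbb{O}$: since $\mathbb{O}$ is non-associative, neither the clean cyclic product (\ref{cy}) nor the matrix-associativity argument is available verbatim, and $\mathit{\mu}_{\mathit{Z}}^{\left[  n\right]  }$ must be read with a fixed bracketing of the $n$ matrix factors. I would handle this by using that $\mathbb{O}$ is an alternative division algebra: after the bracketing is fixed, each single-variable octonionic equation still isolates the unknown entry between two determined octonions, and $ax=b$, $xa=b$ have the unique solutions $x=a^{-1}b$, $x=ba^{-1}$, so componentwise solvability, and hence the division property, persists. Checking that the fixed bracketing is compatible across all $n$ placements of the internal multiplication (the total-associativity clause) is the delicate bookkeeping step, but it does not affect invertibility, which is what the division property requires. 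Finally, as with $\mathbb{D}\setminus\left\{0\right\}$ in the binary case, one should note that $\mathbb{D}_{\operatorname{div}}^{\left[  n\right]  }$ is closed under $\mathit{\mu}_{\mathit{Z}}^{\left[  n\right]  }$ but not under the binary addition, so \textquotedblleft division $n$-ary algebra\textquotedblright\ here refers to this multiplicative $n$-ary group structure together with the ambient addition.
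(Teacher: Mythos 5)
The paper states Theorem \ref{theor-div} without any proof: it relies implicitly on the remark immediately preceding it (that invertible $\mathit{Z}$-matrices are monomial) and on the earlier querelement theorem, whose proof already derives (\ref{zi}) from the component equations (\ref{zd}) by multiplying through with inverses in $\mathbb{D}$. Your proposal supplies exactly the argument the paper leaves implicit, and it is correct: closure follows because $\mathbb{D}$ has no zero divisors, so each cyclic component product of nonzero entries is nonzero; total associativity is inherited from the associativity of the ambient matrix product; and solvability decouples into $n-1$ single-unknown component equations (your observation that each entry of the unknown matrix occurs in exactly one component equation, so the system determines $\mathit{X}$ uniquely, is the right bookkeeping). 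Two points where you go beyond the paper deserve note. First, you treat the octonionic case separately via alternativity and a fixed bracketing; the paper is silent here even though it restricts both the cyclic form (\ref{cy}) and the querelement formula (\ref{zi}) to associative $\mathbb{D}$ while still listing $\mathbb{O}$ in the theorem, so your addition actually patches a gap in the paper's own exposition (one should say explicitly that stripping outer factors uses the left and right inverse properties $a^{-1}(ab)=b$ and $(ba)a^{-1}=b$ of alternative algebras). Second, your closing caveat that $\mathbb{D}_{\operatorname{div}}^{\left[ n\right]}$ is not closed under binary addition is accurate and worth stating; the paper's use of the word \textquotedblleft subalgebra\textquotedblright\ must be read, as you do, as referring to the multiplicative $n$-ary group structure in the spirit of Remark \ref{rem-add}.
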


The simplest case is obtained by the polyadization of the reals.

\begin{Example}
[Five-ary real numbers]\label{exam-real}The $5$-ary associative algebra of real
numbers is $\mathbb{R}^{\left[  5\right]  }=\left\langle \left\{
\mathit{R}^{\left[  5\right]  }\right\}  \mid\left(  +\right)  ,\mathit{\mu
}_{\mathit{R}}^{\left[  5\right]  }\right\rangle $, where $\mathit{R}^{\left[
5\right]  }$ is the cyclic $4\times4$ block-shift matrix (\ref{zn}) with real
entries%
\begin{align}
\mathit{R}^{\left[  5\right]  }  &  =\left(
\begin{array}
[c]{cccc}%
0 & a_{1} & 0 & 0\\
0 & 0 & a_{2} & 0\\
0 & 0 & 0 & a_{3}\\
a_{4} & 0 & 0 & 0
\end{array}
\right)  ,\label{r5}\\
\ \ \ \det\mathit{R}^{\left[  5\right]  }  &  =a_{1}a_{2}a_{3}a_{4}%
,\ \ \ \ \ \ a_{i}\in\mathbb{R}, \label{r5d}%
\end{align}
and the multiplication $\mathit{\mu}_{\mathit{R}}^{\left[  5\right]  }$ is an
ordinary product of five matrices. Only with respect to the product of $5$
elements $\mathit{R}^{\left[  5\right]  }$ is the algebra closed, and
therefore $\mathbb{R}^{\left[  5\right]  }$ is nonderived. In components, we
have the braiding cyclic products (\ref{cy}) for $a_{i}$. If $a_{i}%
\in\mathbb{R}\setminus\left\{  0\right\}  $, the component equations for the
querelement (\ref{zd}) (after the cancellation of nonzero $a_{i}$) become%
\begin{align}
a_{2}a_{3}a_{4}\widetilde{a}_{1}  &  =1,\\
a_{3}a_{4}a_{1}\widetilde{a}_{2}  &  =1,\\
a_{4}a_{1}a_{2}\widetilde{a}_{3}  &  =1,\\
a_{1}a_{2}a_{3}\widetilde{a}_{4}  &  =1.
\end{align}
\hl{Thus}, the querelement for an invertible (\ref{r5}) is%
\begin{equation}
\widetilde{\mathit{R}}^{\left[  5\right]  }=\left(
\begin{array}
[c]{cccc}%
0 & \dfrac{1}{a_{2}a_{3}a_{4}} & 0 & 0\\
0 & 0 & \dfrac{1}{a_{3}a_{4}a_{1}} & 0\\
0 & 0 & 0 & \dfrac{1}{a_{4}a_{1}a_{2}}\\
\dfrac{1}{a_{1}a_{2}a_{3}} & 0 & 0 & 0
\end{array}
\right)  ,\ \ \ a_{i}\in\mathbb{R}\setminus\left\{  0\right\}  ,
\end{equation}
and therefore, the algebra $\mathbb{R}^{\left[  5\right]  }$ of $5$-ary real
numbers is a non-division algebra, because the elements with some $a_{i}=0$
are in $\mathbb{R}^{\left[  5\right]  }$, but they are noninvertible (due to
(\ref{r5d})), and so $\mathbb{R}^{\left[  5\right]  }$ is not a field. But the
subalgebra $\mathbb{R}_{\operatorname{div}}^{\left[  5\right]  }%
\subset\mathbb{R}^{\left[  5\right]  }$ of invertible (monomial) matrices
$\mathit{R}^{\left[  5\right]  }$ ($\det\mathit{R}^{\left[  5\right]  }\neq0$,
with all $a_{i}\neq0$) is a division $n$-ary algebra of reals (see
{Theorem} \ref{theor-div}).
\end{Example}

Now, we provide the example of the $4$-ary algebra of complex numbers, to
compare it with the dual numbers of the same arity in  {Example
}\ref{exam-dual}.

\begin{Example}
[4-ary complex numbers]\label{exam-complex}First, we establish notations, as
before. The commutative and associative two-dimensional algebra of complex
numbers is $\mathbb{C}=\left\langle \left\{  \mathit{z}\right\}  \mid\left(
+\right)  ,\left(  \cdot\right)  \right\rangle $, where $\mathit{z}%
=a+b\mathit{i}$ with $\mathit{i}^{2}=-1$, $a,b\in\mathbb{R}$. The binary
multiplication $\mu_{\mathsf{v}}^{\left[  2\right]  }=\mu_{compl}^{\left[
2\right]  }$ of pairs%
\begin{equation}
\mathsf{v}_{2}=\left(
\begin{array}
[c]{c}%
a\\
b
\end{array}
\right)  \in\mathbb{C}^{tu}=\left\langle \left\{  \mathsf{v}_{2}\right\}
\mid\left(  +\right)  ,\mu_{\mathsf{v}}^{\left[  2\right]  }\right\rangle
\label{v}%
\end{equation}
(where addition is component-wise, see \textit{Remark} \ref{rem-add}) is%
\begin{equation}
\mu_{\mathsf{v}}^{\left[  2\right]  }\left[  \left(
\begin{array}
[c]{c}%
a^{\prime}\\
b^{\prime}%
\end{array}
\right)  ,\left(
\begin{array}
[c]{c}%
a^{\prime\prime}\\
b^{\prime\prime}%
\end{array}
\right)  \right]  =\left(
\begin{array}
[c]{c}%
a^{\prime}a^{\prime\prime}-b^{\prime\prime}b^{\prime}\\
b^{\prime\prime}a^{\prime}+b^{\prime}a^{\prime\prime}%
\end{array}
\right)  \in\mathbb{C}^{tu},\ \ \ \ a^{\prime},a^{\prime\prime},b^{\prime
},b^{\prime\prime}\in\mathbb{R}.\label{m2}%
\end{equation}

\hl{The} algebra of pairs $\mathbb{C}^{tu}$ does not contain idempotents or zero
divisors, its multiplication agrees with one of $\mathbb{C}$ ($\mathit{z}%
^{\prime}\cdot\mathit{z}^{\prime\prime}=\mathit{z}$), and therefore, it is a
division algebra, being isomorphic to $\mathbb{C}$%
\begin{equation}
\mathbb{C}^{tu}\cong\mathbb{C}.\label{cc}%
\end{equation}

Using the matrix polyadization procedure, we construct the nonderived $4$-ary
algebra of complex numbers $\mathbb{C}^{\left[  4\right]  }=\left\langle
\left\{  \mathit{Z}^{\left[  4\right]  }\right\}  \mid\left(  +\right)
,\mathit{\mu}_{\mathit{Z}}^{\left[  4\right]  }\right\rangle $ of the
dimension $D^{\left[  4\right]  }=6$ (see (\ref{dn})) by introducing the
following $3\times3$ matrix (\ref{zn})%
\begin{equation}
\mathit{Z}=\mathit{Z}^{\left[  4\right]  }=\left(
\begin{array}
[c]{ccc}%
0 & \mathit{z}_{1} & 0\\
0 & 0 & \mathit{z}_{2}\\
\mathit{z}_{3} & 0 & 0
\end{array}
\right)  =\left(
\begin{array}
[c]{ccc}%
0 & a_{1}+b_{1}\mathit{i} & 0\\
0 & 0 & a_{2}+b_{2}\mathit{i}\\
a_{3}+b_{3}\mathit{i} & 0 & 0
\end{array}
\right)  \in\mathbb{C}^{\left[  4\right]  },\ \ \ \mathit{z}_{1}%
,\mathit{z}_{2},\mathit{z}_{3}\in\mathbb{C}.\label{z4c}%
\end{equation}
\hl{The} 4-ary product of $\mathit{Z}$-matrices (\ref{z4c}) is%
\vspace{19pt}
\begin{equation}
\mathit{\mu}_{\mathit{Z}}^{\left[  4\right]  }\left[  \mathit{Z}^{\prime
},\mathit{Z}^{\prime\prime},\mathit{Z}^{\prime\prime\prime},\mathit{Z}%
^{\prime\prime\prime\prime}\right]  =\mathit{Z}^{\prime}\mathit{Z}%
^{\prime\prime}\mathit{Z}^{\prime\prime\prime}\mathit{Z}^{\prime\prime
\prime\prime}.\label{m4z}%
\end{equation}
\hl{The} corresponding to (\ref{m4z}) cyclic product in the components (\ref{cy})
becomes%
\begin{align}
\mathit{z}_{1}^{\prime}\mathit{z}_{2}^{\prime\prime}\mathit{z}_{3}%
^{\prime\prime\prime}\mathit{z}_{1}^{\prime\prime\prime\prime} &
=\mathit{z}_{1},\nonumber\\
\mathit{z}_{2}^{\prime}\mathit{z}_{3}^{\prime\prime}\mathit{z}_{1}%
^{\prime\prime\prime}\mathit{z}_{2}^{\prime\prime\prime\prime} &
=\mathit{z}_{2},\nonumber\\
\mathit{z}_{3}^{\prime}\mathit{z}_{1}^{\prime\prime}\mathit{z}_{2}%
^{\prime\prime\prime}\mathit{z}_{3}^{\prime\prime\prime\prime} &
=\mathit{z}_{3},\ \ \ \ \ \ \mathit{z}_{i},\mathit{z}_{i}^{\prime}%
,\mathit{z}_{i}^{\prime\prime},\mathit{z}_{i}^{\prime\prime\prime}%
,\mathit{z}_{i}^{\prime\prime\prime\prime}\in\mathbb{C}.
\end{align}
In terms of six-tuples $\mathsf{v}_{6}$ over $\mathbb{R}$ (cf. (\ref{ab})), the
$4$-ary multiplication $\mu_{\mathsf{v}}^{\left[  4\right]  }$ in
$\mathbb{C}^{\left[  4\right]  ,tu}=\left\langle \left\{  \mathsf{v}%
_{6}\right\}  \mid\left(  +\right)  ,\mu_{\mathsf{v}}^{\left[  4\right]
}\right\rangle $ has the form (cf. (\ref{m2}))
{\footnotesize
\begin{align}
&  \mu_{\mathsf{v}}^{\left[  4\right]  }\left[  \left(
\begin{array}
[c]{c}%
a_{1}^{\prime}\\
b_{1}^{\prime}\\
a_{2}^{\prime}\\
b_{2}^{\prime}\\
a_{3}^{\prime}\\
b_{3}^{\prime}%
\end{array}
\right)  ,\left(
\begin{array}
[c]{c}%
a_{1}^{\prime\prime}\\
b_{1}^{\prime\prime}\\
a_{2}^{\prime\prime}\\
b_{2}^{\prime\prime}\\
a_{3}^{\prime\prime}\\
b_{3}^{\prime\prime}%
\end{array}
\right)  ,\left(
\begin{array}
[c]{c}%
a_{1}^{\prime\prime\prime}\\
b_{1}^{\prime\prime\prime}\\
a_{2}^{\prime\prime\prime}\\
b_{2}^{\prime\prime\prime}\\
a_{3}^{\prime\prime\prime}\\
b_{3}^{\prime\prime\prime}%
\end{array}
\right)  ,\left(
\begin{array}
[c]{c}%
a_{1}^{\prime\prime\prime\prime}\\
b_{1}^{\prime\prime\prime\prime}\\
a_{2}^{\prime\prime\prime\prime}\\
b_{2}^{\prime\prime\prime\prime}\\
a_{3}^{\prime\prime\prime\prime}\\
b_{3}^{\prime\prime\prime\prime}%
\end{array}
\right)  \right]  \nonumber\\
&  =\left(
\begin{array}
[c]{c}%
a_{1}^{\prime}a_{2}^{\prime\prime}a_{3}^{\prime\prime\prime}a_{1}%
^{\prime\prime\prime\prime}-a_{1}^{\prime}b_{2}^{\prime\prime}b_{3}%
^{\prime\prime\prime}a_{1}^{\prime\prime\prime\prime}-b_{1}^{\prime}%
a_{2}^{\prime\prime}b_{3}^{\prime\prime\prime}a_{1}^{\prime\prime\prime\prime
}-b_{1}^{\prime}b_{2}^{\prime\prime}a_{3}^{\prime\prime\prime}a_{1}%
^{\prime\prime\prime\prime}-a_{1}^{\prime}a_{2}^{\prime\prime}b_{3}%
^{\prime\prime\prime}b_{1}^{\prime\prime\prime\prime}-a_{1}^{\prime}%
b_{2}^{\prime\prime}a_{3}^{\prime\prime\prime}b_{1}^{\prime\prime\prime\prime
}-b_{1}^{\prime}a_{2}^{\prime\prime}a_{3}^{\prime\prime\prime}b_{1}%
^{\prime\prime\prime\prime}+b_{1}^{\prime}b_{2}^{\prime\prime}b_{3}%
^{\prime\prime\prime}b_{1}^{\prime\prime\prime\prime}\\
a_{1}^{\prime}a_{2}^{\prime\prime}b_{3}^{\prime\prime\prime}a_{1}%
^{\prime\prime\prime\prime}+a_{1}^{\prime}b_{2}^{\prime\prime}a_{3}%
^{\prime\prime\prime}a_{1}^{\prime\prime\prime\prime}+b_{1}^{\prime}%
a_{2}^{\prime\prime}a_{3}^{\prime\prime\prime}a_{1}^{\prime\prime\prime\prime
}+a_{1}^{\prime}a_{2}^{\prime\prime}a_{3}^{\prime\prime\prime}b_{1}%
^{\prime\prime\prime\prime}-b_{1}^{\prime}b_{2}^{\prime\prime}b_{3}%
^{\prime\prime\prime}a_{1}^{\prime\prime\prime\prime}-a_{1}^{\prime}%
b_{2}^{\prime\prime}b_{3}^{\prime\prime\prime}b_{1}^{\prime\prime\prime\prime
}-b_{1}^{\prime}a_{2}^{\prime\prime}b_{3}^{\prime\prime\prime}b_{1}%
^{\prime\prime\prime\prime}-b_{1}^{\prime}b_{2}^{\prime\prime}a_{3}%
^{\prime\prime\prime}b_{1}^{\prime\prime\prime\prime}\\
a_{2}^{\prime}a_{3}^{\prime\prime}a_{1}^{\prime\prime\prime}a_{2}%
^{\prime\prime\prime\prime}-a_{2}^{\prime}b_{3}^{\prime\prime}b_{1}%
^{\prime\prime\prime}a_{2}^{\prime\prime\prime\prime}-b_{2}^{\prime}%
b_{3}^{\prime\prime}a_{1}^{\prime\prime\prime}a_{2}^{\prime\prime\prime\prime
}-b_{2}^{\prime}a_{3}^{\prime\prime}b_{1}^{\prime\prime\prime}a_{2}%
^{\prime\prime\prime\prime}-a_{2}^{\prime}b_{3}^{\prime\prime}a_{1}%
^{\prime\prime\prime}b_{2}^{\prime\prime\prime\prime}-a_{2}^{\prime}%
a_{3}^{\prime\prime}b_{1}^{\prime\prime\prime}b_{2}^{\prime\prime\prime\prime
}-b_{2}^{\prime}a_{3}^{\prime\prime}a_{1}^{\prime\prime\prime}b_{2}%
^{\prime\prime\prime\prime}+b_{2}^{\prime}b_{3}^{\prime\prime}b_{1}%
^{\prime\prime\prime}b_{2}^{\prime\prime\prime\prime}\\
a_{2}^{\prime}b_{3}^{\prime\prime}a_{1}^{\prime\prime\prime}a_{2}%
^{\prime\prime\prime\prime}+a_{2}^{\prime}a_{3}^{\prime\prime}b_{1}%
^{\prime\prime\prime}a_{2}^{\prime\prime\prime\prime}+b_{2}^{\prime}%
a_{3}^{\prime\prime}a_{1}^{\prime\prime\prime}a_{2}^{\prime\prime\prime\prime
}+a_{2}^{\prime}a_{3}^{\prime\prime}a_{1}^{\prime\prime\prime}b_{2}%
^{\prime\prime\prime\prime}-b_{2}^{\prime}b_{3}^{\prime\prime}b_{1}%
^{\prime\prime\prime}a_{2}^{\prime\prime\prime\prime}-a_{2}^{\prime}%
b_{3}^{\prime\prime}b_{1}^{\prime\prime\prime}b_{2}^{\prime\prime\prime\prime
}-b_{2}^{\prime}b_{3}^{\prime\prime}a_{1}^{\prime\prime\prime}b_{2}%
^{\prime\prime\prime\prime}-b_{2}^{\prime}a_{3}^{\prime\prime}b_{1}%
^{\prime\prime\prime}b_{2}^{\prime\prime\prime\prime}\\
a_{3}^{\prime}a_{1}^{\prime\prime}a_{2}^{\prime\prime\prime}a_{3}%
^{\prime\prime\prime\prime}-b_{3}^{\prime}a_{1}^{\prime\prime}b_{2}%
^{\prime\prime\prime}a_{3}^{\prime\prime\prime\prime}-a_{3}^{\prime}%
b_{1}^{\prime\prime}b_{2}^{\prime\prime\prime}a_{3}^{\prime\prime\prime\prime
}-b_{3}^{\prime}b_{1}^{\prime\prime}a_{2}^{\prime\prime\prime}a_{3}%
^{\prime\prime\prime\prime}-a_{3}^{\prime}a_{1}^{\prime\prime}b_{2}%
^{\prime\prime\prime}b_{3}^{\prime\prime\prime\prime}-b_{3}^{\prime}%
a_{1}^{\prime\prime}a_{2}^{\prime\prime\prime}b_{3}^{\prime\prime\prime\prime
}-a_{3}^{\prime}b_{1}^{\prime\prime}a_{2}^{\prime\prime\prime}b_{3}%
^{\prime\prime\prime\prime}+b_{3}^{\prime}b_{1}^{\prime\prime}b_{2}%
^{\prime\prime\prime}b_{3}^{\prime\prime\prime\prime}\\
a_{3}^{\prime}a_{1}^{\prime\prime}b_{2}^{\prime\prime\prime}a_{3}%
^{\prime\prime\prime\prime}+b_{3}^{\prime}a_{1}^{\prime\prime}a_{2}%
^{\prime\prime\prime}a_{3}^{\prime\prime\prime\prime}+a_{3}^{\prime}%
b_{1}^{\prime\prime}a_{2}^{\prime\prime\prime}a_{3}^{\prime\prime\prime\prime
}+a_{3}^{\prime}a_{1}^{\prime\prime}a_{2}^{\prime\prime\prime}b_{3}%
^{\prime\prime\prime\prime}-b_{3}^{\prime}b_{1}^{\prime\prime}b_{2}%
^{\prime\prime\prime}a_{3}^{\prime\prime\prime\prime}-b_{3}^{\prime}%
a_{1}^{\prime\prime}b_{2}^{\prime\prime\prime}b_{3}^{\prime\prime\prime\prime
}-a_{3}^{\prime}b_{1}^{\prime\prime}b_{2}^{\prime\prime\prime}b_{3}%
^{\prime\prime\prime\prime}-b_{3}^{\prime}b_{1}^{\prime\prime}a_{2}%
^{\prime\prime\prime}b_{3}^{\prime\prime\prime\prime}%
\end{array}
\right)  ,
\end{align}
} which is nonderived and noncommutative due to braidings in the cyclic
product (\ref{z}). The polyadic unit ($4$-ary unit) in the $4$-ary algebra of
$6$-tuples is (see (\ref{mn}))%
\begin{align}
\mathsf{e}_{6} &  =\left(
\begin{array}
[c]{c}%
\mathit{1}\\
0\\
\mathit{1}\\
0\\
\mathit{1}\\
0
\end{array}
\right)  \in\mathbb{C}^{\left[  4\right]  ,tu},\\
\mu_{\mathsf{v}}^{\left[  4\right]  }\left[  \mathsf{e}_{6},\mathsf{e}%
_{6},\mathsf{e}_{6},\mathsf{v}_{6}\right]   &  =\mu_{\mathsf{v}}^{\left[
4\right]  }\left[  \mathsf{e}_{6},\mathsf{e}_{6},\mathsf{v}_{6},\mathsf{e}%
_{6}\right]  =\mu_{\mathsf{v}}^{\left[  4\right]  }\left[  \mathsf{e}%
_{6},\mathsf{v}_{6},\mathsf{e}_{6},\mathsf{e}_{6}\right]  =\mu_{\mathsf{v}%
}^{\left[  4\right]  }\left[  \mathsf{v}_{6},\mathsf{e}_{6},\mathsf{e}%
_{6},\mathsf{e}_{6}\right]  =\mathsf{v}_{6}.
\end{align}

\hl{The} querelement (\ref{mgg}) and (\ref{zv}) for invertible elements of the $4$-ary
algebra of complex numbers $\mathbb{C}^{\left[  4\right]  }$ has the matrix
form which follows from the equations (\ref{zd})%
\begin{equation}
\widetilde{\mathit{Z}}^{\left[  4\right]  }=\left(
\begin{array}
[c]{ccc}%
0 & \frac{1}{\mathit{z}_{2}\mathit{z}_{3}} & 0\\
0 & 0 & \frac{1}{\mathit{z}_{1}\mathit{z}_{3}}\\
\frac{1}{\mathit{z}_{1}\mathit{z}_{2}} & 0 & 0
\end{array}
\right)  \in\mathbb{C}^{\left[  4\right]  },\ \ \ \mathit{z}_{1}%
,\mathit{z}_{2},\mathit{z}_{3}\in\mathbb{C\setminus}\left\{  \mathit{0}%
\right\}  .
\end{equation}

Thus, $\mathbb{C}^{\left[  4\right]  }=\left\langle \left\{  \mathit{Z}%
^{\left[  4\right]  }\right\}  \mid\left(  +\right)  ,\mathit{\mu}^{\left[
4\right]  },\widetilde{\left(  \ \ \right)  }\right\rangle $ is the nonderived
$4$-ary non-division algebra over $\mathbb{R}$ obtained by the matrix
polyadization procedure from the algebra $\mathbb{C}$ of complex numbers. The
$4$-ary algebra of complex numbers $\mathbb{C}^{\left[  4\right]  }$ is not a
field, because it contains noninvertible nonzero elements (with some
$\mathit{z}_{i}=\mathit{0}$).

In $\mathbb{C}^{\left[  4\right]  ,tu}$ the querelement is given by the
following $6$-tuple%
\begin{equation}
\widetilde{\mathsf{v}}_{6}=\widetilde{\left(
\begin{array}
[c]{c}%
a_{1}\\
b_{1}\\
a_{2}\\
b_{2}\\
a_{3}\\
b_{3}%
\end{array}
\right)  }=\left(
\begin{array}
[c]{c}%
\dfrac{a_{2}a_{3}-b_{2}b_{3}}{\left(  a_{2}b_{3}+a_{3}b_{2}\right)
^{2}+\left(  a_{2}a_{3}-b_{2}b_{3}\right)  ^{2}}\\
-\dfrac{a_{2}b_{3}+a_{3}b_{2}}{\left(  a_{2}b_{3}+a_{3}b_{2}\right)
^{2}+\left(  a_{2}a_{3}-b_{2}b_{3}\right)  ^{2}}\\
\dfrac{a_{1}a_{3}-b_{1}b_{3}}{\left(  a_{1}b_{3}+a_{3}b_{1}\right)
^{2}+\left(  a_{1}a_{3}-b_{1}b_{3}\right)  ^{2}}\\
-\dfrac{a_{1}b_{3}+a_{3}b_{1}}{\left(  a_{1}b_{3}+a_{3}b_{1}\right)
^{2}+\left(  a_{1}a_{3}-b_{1}b_{3}\right)  ^{2}}\\
\dfrac{a_{1}a_{2}-b_{1}b_{2}}{\left(  a_{1}b_{2}+a_{2}b_{1}\right)
^{2}+\left(  a_{1}a_{2}-b_{1}b_{2}\right)  ^{2}}\\
-\dfrac{a_{1}b_{2}+a_{2}b_{1}}{\left(  a_{1}b_{2}+a_{2}b_{1}\right)
^{2}+\left(  a_{1}a_{2}-b_{1}b_{2}\right)  ^{2}}%
\end{array}
\right)  ,\ \ \ a_{i},b_{i}\in\mathbb{R}.
\end{equation}

Therefore, $\mathbb{C}^{\left[  4\right]  ,tu}$ is a division nonderived
$4$-ary algebra isomorphic to $\mathbb{C}^{\left[  4\right]  }$.

Thus, the application of the matrix polyadization procedure to the
commutative, associative, unital, two-dimensional, division algebra of complex
numbers $\mathbb{C}$ gives the noncommutative, $4$-ary nonderived, totally
associative, unital, six-dimensional, non-division algebra $\mathbb{C}%
^{\left[  4\right]  }$ over $\mathbb{R}$ (with the corresponding isomorphic
$4$-ary non-division algebra of $6$-tuples $\mathbb{C}^{\left[  4\right]
,tu}$), which we call the $4$-ary complex numbers. The subalgebra
$\mathbb{C}_{\operatorname{div}}^{\left[  4\right]  }\subset\mathbb{C}%
^{\left[  4\right]  }$ of invertible matrices $\mathit{Z}^{\left[  4\right]
}$ ($\det\mathit{Z}^{\left[  4\right]  }\neq\mathit{0}$, with all
$\mathit{z}_{i}\neq\mathit{0}$) is the division $4$-ary algebra of complex
numbers (by {Theorem} \ref{theor-div}).
\end{Example}

We then consider the polyadization of the noncommutative quaternion algebra
$\mathbb{H}$.

\begin{Example}
[Ternary quaternions]\label{exam-quat}The associative four-dimensional algebra
of quaternions is given by%
\begin{align}
\mathbb{H} &  =\left\langle \left\{  \mathit{q}\right\}  \mid\left(  +\right)
,\left(  \cdot\right)  \right\rangle ,\ \ \ \ \ \mathit{q}=a+b\mathit{i}%
+c\mathit{j}+d\mathit{k},\nonumber\\
\mathit{ij} &  =\mathit{k},\mathit{ij}=-\mathit{ji},\left(  \text{+cycled}%
\right)  ,\mathit{i}^{2}=\mathit{j}^{2}=\mathit{k}^{2}=\mathit{ijk}%
=-1,\ \ a,b,c,d\in\mathbb{R}.\label{qa}%
\end{align}

\hl{The} binary multiplication $\mu_{\mathsf{v}}^{\left[  4\right]  }$ of the
quadruples%
\begin{equation}
\mathsf{v}_{4}=\left(
\begin{array}
[c]{c}%
a\\
b\\
c\\
d
\end{array}
\right)  \in\mathbb{H}^{tu}=\left\langle \left\{  \mathsf{v}_{4}\right\}
\mid\left(  +\right)  ,\mu_{\mathsf{v}}^{\left[  2\right]  }\right\rangle
\end{equation}
is (we present the binary product in our notation here for completeness to
compare with the ternary case below)%
\begin{equation}
\mu_{\mathsf{v}}^{\left[  4\right]  }\left[  \left(
\begin{array}
[c]{c}%
a^{\prime}\\
b^{\prime}\\
c^{\prime}\\
d^{\prime}%
\end{array}
\right)  ,\left(
\begin{array}
[c]{c}%
a^{\prime\prime}\\
b^{\prime\prime}\\
c^{\prime\prime}\\
d^{\prime\prime}%
\end{array}
\right)  \right]  =\left(
\begin{array}
[c]{c}%
a^{\prime}a^{\prime\prime}-b^{\prime}b^{\prime\prime}-c^{\prime}%
c^{\prime\prime}-d^{\prime}d^{\prime\prime}\\
a^{\prime}b^{\prime\prime}+b^{\prime}a^{\prime\prime}-d^{\prime}%
c^{\prime\prime}+c^{\prime}d^{\prime\prime}\\
a^{\prime}c^{\prime\prime}+c^{\prime}a^{\prime\prime}+d^{\prime}%
b^{\prime\prime}-b^{\prime}d^{\prime\prime}\\
a^{\prime}d^{\prime\prime}+b^{\prime}c^{\prime\prime}-c^{\prime}%
b^{\prime\prime}+d^{\prime}a^{\prime\prime}%
\end{array}
\right)  \in\mathbb{H}^{tu},\label{mv4}%
\end{equation}
where $a^{\prime},a^{\prime\prime},b^{\prime},b^{\prime\prime},c^{\prime
},c^{\prime\prime},d^{\prime},d^{\prime\prime}\in\mathbb{R}$. The binary
algebra of quadruples $\mathbb{H}^{tu}$ is a noncommutative division algebra
(isomorphic to $\mathbb{H}$), without idempotents or zero divisors.

By the above matrix polyadization procedure, we construct the nonderived
ternary algebra of quaternions $\mathbb{H}^{\left[  3\right]  }=\left\langle
\left\{  \mathit{Q}^{\left[  3\right]  }\right\}  \mid\left(  +\right)
,\mathit{\mu}_{\mathit{Q}}^{\left[  3\right]  }\right\rangle $ of the
dimension $D^{\left[  3\right]  }=8$ (see (\ref{dn})) by introducing the
following $2\times2$ matrix (\ref{zn})%
\begin{equation}
\mathit{Q}^{\left[  3\right]  }=\left(
\begin{array}
[c]{cc}%
0 & \mathit{q}_{1}\\
\mathit{q}_{2} & 0
\end{array}
\right)  =\left(
\begin{array}
[c]{cc}%
0 & a_{1}+b_{1}\mathit{i}+c_{1}\mathit{j}+d_{1}\mathit{k}\\
a_{2}+b_{2}\mathit{i}+c_{2}\mathit{j}+d_{2}\mathit{k} & 0
\end{array}
\right)  \in\mathbb{H}^{\left[  3\right]  },\ \ \ \mathit{q}_{1}%
,\mathit{q}_{2}\in\mathbb{H}.
\end{equation}

\hl{The} nonderived ternary product of $\mathit{Q}$-matrices (\ref{z4c}) is%
\begin{equation}
\mathit{\mu}_{\mathit{Q}}^{\left[  3\right]  }\left[  \mathit{Q}^{\prime
},\mathit{Q}^{\prime\prime},\mathit{Q}^{\prime\prime\prime}\right]
=\mathit{Q}^{\prime}\mathit{Q}^{\prime\prime}\mathit{Q}^{\prime\prime\prime}.
\end{equation}

\hl{The} cyclic product corresponding to (\ref{m4z}) in components (\ref{cy})
becomes%
\begin{align}
\mathit{q}_{1}^{\prime}\mathit{q}_{2}^{\prime\prime}\mathit{q}_{1}%
^{\prime\prime\prime} &  =\mathit{q}_{1},\nonumber\\
\mathit{q}_{2}^{\prime}\mathit{q}_{1}^{\prime\prime}\mathit{q}_{2}%
^{\prime\prime\prime} &  =\mathit{q}_{2},\ \ \ \ \ \mathit{q}_{i}%
,\mathit{q}_{i}^{\prime},\mathit{q}_{i}^{\prime\prime},\mathit{q}_{i}%
^{\prime\prime\prime}\in\mathbb{H}.\label{q}%
\end{align}
\hl{In} {terms of eight-tuples $\mathsf{v}_{8}$ over $\mathbb{R}$ (cf. (\ref{ab})), the
ternary multiplication $\mu_{\mathsf{v}}^{\left[  3\right]  }$ in
$\mathbb{H}^{\left[  3\right]  ,tu}=\left\langle \left\{  \mathsf{v}%
_{8}\right\}  \mid\left(  +\right)  ,\mu_{\mathsf{v}}^{\left[  3\right]
}\right\rangle $ has the form (cf. (\ref{m2}))}

{\tiny
\begin{align}
&  \mu_{\mathsf{v}}^{\left[  3\right]  }\left[  \mathsf{v}_{8}^{\prime
},\mathsf{v}_{8}^{\prime\prime},\mathsf{v}_{8}^{\prime\prime\prime}\right]
=\mu_{\mathsf{v}}^{\left[  3\right]  }\left[  \left(
\begin{array}
[c]{c}%
a_{1}^{\prime}\\
b_{1}^{\prime}\\
c_{1}^{\prime}\\
d_{1}^{\prime}\\
a_{2}^{\prime}\\
b_{2}^{\prime}\\
c_{2}^{\prime}\\
d_{2}^{\prime}%
\end{array}
\right)  ,\left(
\begin{array}
[c]{c}%
a_{1}^{\prime\prime}\\
b_{1}^{\prime\prime}\\
c_{1}^{\prime\prime}\\
d_{1}^{\prime\prime}\\
a_{2}^{\prime\prime}\\
b_{2}^{\prime\prime}\\
c_{2}^{\prime\prime}\\
d_{2}^{\prime\prime}%
\end{array}
\right)  ,\left(
\begin{array}
[c]{c}%
a_{1}^{\prime\prime\prime}\\
b_{1}^{\prime\prime\prime}\\
c_{1}^{\prime\prime\prime}\\
d_{1}^{\prime\prime\prime}\\
a_{2}^{\prime\prime\prime}\\
b_{2}^{\prime\prime\prime}\\
c_{2}^{\prime\prime\prime}\\
d_{2}^{\prime\prime\prime}%
\end{array}
\right)  \right]  =\nonumber\\
&  \left(
\begin{array}
[c]{c}%
\left(  a_{1}^{\prime}a_{2}^{\prime\prime}-b_{1}^{\prime}b_{2}^{\prime\prime
}-c_{1}^{\prime}c_{2}^{\prime\prime}-d_{1}^{\prime}d_{2}^{\prime\prime
}\right)  a_{1}^{\prime\prime\prime}-\left(  a_{1}^{\prime}b_{2}^{\prime
\prime}+b_{1}^{\prime}a_{2}^{\prime\prime}+c_{1}^{\prime}d_{2}^{\prime\prime
}-d_{1}^{\prime}c_{2}^{\prime\prime}\right)  b_{1}^{\prime\prime\prime
}-\left(  a_{1}^{\prime}c_{2}^{\prime\prime}+c_{1}^{\prime}a_{2}^{\prime
\prime}+d_{1}^{\prime}b_{2}^{\prime\prime}-b_{1}^{\prime}d_{2}^{\prime\prime
}\right)  c_{1}^{\prime\prime\prime}-\left(  a_{1}^{\prime}d_{2}^{\prime
\prime}+d_{1}^{\prime}a_{2}^{\prime\prime}+b_{1}^{\prime}c_{2}^{\prime\prime
}-c_{1}^{\prime}b_{2}^{\prime\prime}\right)  d_{1}^{\prime\prime\prime}\\
\left(  a_{1}^{\prime}b_{2}^{\prime\prime}+b_{1}^{\prime}a_{2}^{\prime\prime
}+c_{1}^{\prime}d_{2}^{\prime\prime}-d_{1}^{\prime}c_{2}^{\prime\prime
}\right)  a_{1}^{\prime\prime\prime}+\left(  a_{1}^{\prime}a_{2}^{\prime
\prime}-b_{1}^{\prime}b_{2}^{\prime\prime}-c_{1}^{\prime}c_{2}^{\prime\prime
}-d_{1}^{\prime}d_{2}^{\prime\prime}\right)  b_{1}^{\prime\prime\prime
}-\left(  a_{1}^{\prime}d_{2}^{\prime\prime}+d_{1}^{\prime}a_{2}^{\prime
\prime}+b_{1}^{\prime}c_{2}^{\prime\prime}-c_{1}^{\prime}b_{2}^{\prime\prime
}\right)  c_{1}^{\prime\prime\prime}+\left(  a_{1}^{\prime}c_{2}^{\prime
\prime}+c_{1}^{\prime}a_{2}^{\prime\prime}+d_{1}^{\prime}b_{2}^{\prime\prime
}-b_{1}^{\prime}d_{2}^{\prime\prime}\right)  d_{1}^{\prime\prime\prime}\\
\left(  a_{1}^{\prime}c_{2}^{\prime\prime}+c_{1}^{\prime}a_{2}^{\prime\prime
}+d_{1}^{\prime}b_{2}^{\prime\prime}-b_{1}^{\prime}d_{2}^{\prime\prime
}\right)  a_{1}^{\prime\prime\prime}+\left(  a_{1}^{\prime}d_{2}^{\prime
\prime}+d_{1}^{\prime}a_{2}^{\prime\prime}+b_{1}^{\prime}c_{2}^{\prime\prime
}-c_{1}^{\prime}b_{2}^{\prime\prime}\right)  b_{1}^{\prime\prime\prime
}+\left(  a_{1}^{\prime}a_{2}^{\prime\prime}-b_{1}^{\prime}b_{2}^{\prime
\prime}-c_{1}^{\prime}c_{2}^{\prime\prime}-d_{1}^{\prime}d_{2}^{\prime\prime
}\right)  c_{1}^{\prime\prime\prime}-\left(  a_{1}^{\prime}b_{2}^{\prime
\prime}+b_{1}^{\prime}a_{2}^{\prime\prime}+c_{1}^{\prime}d_{2}^{\prime\prime
}-d_{1}^{\prime}c_{2}^{\prime\prime}\right)  d_{1}^{\prime\prime\prime}\\
\left(  a_{1}^{\prime}d_{2}^{\prime\prime}+d_{1}^{\prime}a_{2}^{\prime\prime
}+b_{1}^{\prime}c_{2}^{\prime\prime}-c_{1}^{\prime}b_{2}^{\prime\prime
}\right)  a_{1}^{\prime\prime\prime}-\left(  a_{1}^{\prime}c_{2}^{\prime
\prime}+c_{1}^{\prime}a_{2}^{\prime\prime}+d_{1}^{\prime}b_{2}^{\prime\prime
}-b_{1}^{\prime}d_{2}^{\prime\prime}\right)  b_{1}^{\prime\prime\prime
}+\left(  a_{1}^{\prime}b_{2}^{\prime\prime}+b_{1}^{\prime}a_{2}^{\prime
\prime}+c_{1}^{\prime}d_{2}^{\prime\prime}-d_{1}^{\prime}c_{2}^{\prime\prime
}\right)  c_{1}^{\prime\prime\prime}+\left(  a_{1}^{\prime}a_{2}^{\prime
\prime}-b_{1}^{\prime}b_{2}^{\prime\prime}-c_{1}^{\prime}c_{2}^{\prime\prime
}-d_{1}^{\prime}d_{2}^{\prime\prime}\right)  d_{1}^{\prime\prime\prime}\\
\left(  a_{2}^{\prime}a_{1}^{\prime\prime}-b_{2}^{\prime}b_{1}^{\prime\prime
}-c_{2}^{\prime}c_{1}^{\prime\prime}-d_{2}^{\prime}d_{1}^{\prime\prime
}\right)  a_{2}^{\prime\prime\prime}-\left(  a_{2}^{\prime}b_{1}^{\prime
\prime}+b_{2}^{\prime}a_{1}^{\prime\prime}+c_{2}^{\prime}d_{1}^{\prime\prime
}-d_{1}^{\prime}c_{2}^{\prime\prime}\right)  b_{2}^{\prime\prime\prime
}-\left(  a_{2}^{\prime}c_{1}^{\prime\prime}+c_{2}^{\prime}a_{1}^{\prime
\prime}+d_{2}^{\prime}b_{1}^{\prime\prime}-b_{2}^{\prime}d_{1}^{\prime\prime
}\right)  c_{2}^{\prime\prime\prime}+\left(  a_{2}^{\prime}d_{1}^{\prime
\prime}+d_{2}^{\prime}a_{1}^{\prime\prime}+b_{2}^{\prime}c_{1}^{\prime\prime
}-c_{2}^{\prime}b_{1}^{\prime\prime}\right)  d_{2}^{\prime\prime\prime}\\
\left(  a_{1}^{\prime}b_{2}^{\prime\prime}+b_{1}^{\prime}a_{2}^{\prime\prime
}+c_{1}^{\prime}d_{2}^{\prime\prime}-d_{1}^{\prime}c_{2}^{\prime\prime
}\right)  a_{2}^{\prime\prime\prime}-\left(  a_{2}^{\prime}b_{1}^{\prime
\prime}+b_{2}^{\prime}a_{1}^{\prime\prime}+c_{2}^{\prime}d_{1}^{\prime\prime
}-d_{2}^{\prime}c_{1}^{\prime\prime}\right)  b_{2}^{\prime\prime\prime
}-\left(  a_{2}^{\prime}d_{1}^{\prime\prime}+d_{2}^{\prime}a_{1}^{\prime
\prime}+b_{2}^{\prime}c_{1}^{\prime\prime}-c_{2}^{\prime}b_{1}^{\prime\prime
}\right)  c_{2}^{\prime\prime\prime}+\left(  a_{2}^{\prime}c_{1}^{\prime
\prime}+c_{2}^{\prime}a_{1}^{\prime\prime}+d_{2}^{\prime}b_{1}^{\prime\prime
}-b_{2}^{\prime}d_{1}^{\prime\prime}\right)  d_{2}^{\prime\prime\prime}\\
\left(  a_{2}^{\prime}c_{1}^{\prime\prime}+c_{2}^{\prime}a_{1}^{\prime\prime
}+d_{2}^{\prime}b_{1}^{\prime\prime}-b_{2}^{\prime}d_{1}^{\prime\prime
}\right)  a_{2}^{\prime\prime\prime}+\left(  a_{2}^{\prime}d_{1}^{\prime
\prime}+d_{2}^{\prime}a_{1}^{\prime\prime}+b_{2}^{\prime}c_{1}^{\prime\prime
}-c_{2}^{\prime}b_{1}^{\prime\prime}\right)  b_{2}^{\prime\prime\prime
}+\left(  a_{2}^{\prime}a_{1}^{\prime\prime}-b_{2}^{\prime}b_{1}^{\prime
\prime}-c_{2}^{\prime}c_{1}^{\prime\prime}-d_{2}^{\prime}d_{1}^{\prime\prime
}\right)  c_{2}^{\prime\prime\prime}-\left(  a_{2}^{\prime}b_{1}^{\prime
\prime}+b_{2}^{\prime}a_{1}^{\prime\prime}+c_{2}^{\prime}d_{1}^{\prime\prime
}-d_{2}^{\prime}c_{1}^{\prime\prime}\right)  d_{2}^{\prime\prime\prime}\\
\left(  a_{2}^{\prime}d_{1}^{\prime\prime}+d_{2}^{\prime}a_{1}^{\prime\prime
}+b_{2}^{\prime}c_{1}^{\prime\prime}-c_{2}^{\prime}b_{1}^{\prime\prime
}\right)  a_{2}^{\prime\prime\prime}-\left(  a_{2}^{\prime}c_{1}^{\prime
\prime}+c_{2}^{\prime}a_{1}^{\prime\prime}+d_{2}^{\prime}b_{1}^{\prime\prime
}-b_{2}^{\prime}d_{1}^{\prime\prime}\right)  b_{2}^{\prime\prime\prime
}+\left(  a_{2}^{\prime}b_{1}^{\prime\prime}+b_{2}^{\prime}a_{1}^{\prime
\prime}+c_{2}^{\prime}d_{1}^{\prime\prime}-d_{2}^{\prime}c_{1}^{\prime\prime
}\right)  c_{2}^{\prime\prime\prime}+\left(  a_{2}^{\prime}a_{1}^{\prime
\prime}-b_{2}^{\prime}b_{1}^{\prime\prime}-c_{2}^{\prime}c_{1}^{\prime\prime
}-d_{2}^{\prime}d_{1}^{\prime\prime}\right)  d_{2}^{\prime\prime\prime}%
\end{array}
\right)  \label{h3}%
\end{align}
} where $a_{i}^{\prime},a_{i}^{\prime\prime},a_{i}^{\prime\prime\prime}%
,b_{i}^{\prime},b_{i}^{\prime\prime},b_{i}^{\prime\prime\prime},c_{i}^{\prime
},c_{i}^{\prime\prime},c_{i}^{\prime\prime\prime},d_{i}^{\prime},d_{i}%
^{\prime\prime},d_{i}^{\prime\prime\prime}\in\mathbb{R}$.

\begin{Remark}
It is important to note that, although the ternary multiplication of $8$-tuples
(\ref{h3}) is nonderived and noncommutative, it is not the ordinary product of
three quaternion pairs, but corresponds to the nontrivial cyclic braided
products of (\ref{q}).
\end{Remark}

The polyadic unit (ternary unit) $\mathsf{e}_{8}$ in the ternary algebra of
$8$-tuples is (see (\ref{mn}))%
\begin{align}
\mathsf{e}_{8} &  =\left(
\begin{array}
[c]{c}%
\mathit{1}\\
0\\
0\\
0\\
\mathit{1}\\
0\\
0\\
0
\end{array}
\right)  \in\mathbb{H}^{\left[  3\right]  ,tu},\\
\mu_{\mathsf{v}}^{\left[  3\right]  }\left[  \mathsf{e}_{8},\mathsf{e}%
_{8},\mathsf{v}_{8}\right]   &  =\mu_{\mathsf{v}}^{\left[  3\right]  }\left[
\mathsf{e}_{8},\mathsf{v}_{8},\mathsf{e}_{8}\right]  =\mu_{\mathsf{v}%
}^{\left[  3\right]  }\left[  \mathsf{v}_{8},\mathsf{e}_{8},\mathsf{e}%
_{8}\right]  =\mathsf{v}_{8}.
\end{align}

\hl{The} querelement (\ref{mgg}) and (\ref{zv}) of the nonderived noncommutative
$8$-dimensional ternary algebra of quaternions $\mathbb{H}^{\left[  3\right]
}$ has the matrix form which follows from the general equations (\ref{zd})%
\begin{equation}
\widetilde{\mathit{Q}}^{\left[  3\right]  }=\left(
\begin{array}
[c]{cc}%
0 & \mathit{q}_{2}^{-1}\\
\mathit{q}_{1}^{-1} & 0
\end{array}
\right)  \in\mathbb{H}^{\left[  3\right]  },\ \ \ \mathit{q}_{1}%
,\mathit{q}_{2}\in\mathbb{H\setminus}\left\{  0\right\}  .
\end{equation}

Therefore, $\mathbb{H}^{\left[  3\right]  }=\left\langle \left\{
\mathit{Q}^{\left[  3\right]  }\right\}  \mid\left(  +\right)  ,\mathit{\mu
}_{\mathit{Q}}^{\left[  3\right]  },\widetilde{\left(  \ \ \right)
}\right\rangle $ is the nonderived noncommutative ternary non-division algebra
obtained by the matrix polyadization procedure from the algebra $\mathbb{H}$
of quaternions. Because the elements $\mathit{Q}^{\left[  3\right]  }$ with
$\mathit{q}_{1}=0$ or $\mathit{q}_{2}=0$ are nonzero, but noninvertible,
$\mathbb{H}^{\left[  3\right]  }$ is not a field.

In $\mathbb{H}^{\left[  3\right]  ,tu}$ the querelement is given by the
following $8$-tuple%
\begin{equation}
\widetilde{\mathsf{v}}_{8}=\widetilde{\left(
\begin{array}
[c]{c}%
a_{1}\\
b_{1}\\
c_{1}\\
d_{1}\\
a_{2}\\
b_{2}\\
c_{2}\\
d_{2}%
\end{array}
\right)  }=\left(
\begin{array}
[c]{c}%
\dfrac{a_{1}}{a_{1}^{2}+b_{1}^{2}+c_{1}^{2}+d_{1}^{2}}\\
-\dfrac{b_{1}}{a_{1}^{2}+b_{1}^{2}+c_{1}^{2}+d_{1}^{2}}\\
-\dfrac{c_{1}}{a_{1}^{2}+b_{1}^{2}+c_{1}^{2}+d_{1}^{2}}\\
-\dfrac{d_{1}}{a_{1}^{2}+b_{1}^{2}+c_{1}^{2}+d_{1}^{2}}\\
\dfrac{a_{2}}{a_{2}^{2}+b_{2}^{2}+c_{2}^{2}+d_{2}^{2}}\\
-\dfrac{b_{2}}{a_{2}^{2}+b_{2}^{2}+c_{2}^{2}+d_{2}^{2}}\\
-\dfrac{c_{2}}{a_{2}^{2}+b_{2}^{2}+c_{2}^{2}+d_{2}^{2}}\\
-\dfrac{d_{2}}{a_{2}^{2}+b_{2}^{2}+c_{2}^{2}+d_{2}^{2}}%
\end{array}
\right)  ,\ \ \ a_{1,2}^{2}+b_{1,2}^{2}+c_{1,2}^{2}+d_{1,2}^{2}\neq
0,\ a_{i},b_{i},c_{i},d_{i}\in\mathbb{R}.
\end{equation}

Therefore, $\mathbb{H}^{\left[  3\right]  ,tu}$ is a non-division ternary
algebra isomorphic to $\mathbb{H}^{\left[  3\right]  }$.

To conclude, the application of the matrix polyadization procedure to the
noncommutative, associative, unital, four-dimensional, division algebra of
quaternions $\mathbb{H}$ gives the noncommutative, nonderived ternary, totally
associative, unital, $8$-dimensional, non-division algebra $\mathbb{H}%
^{\left[  3\right]  }$ over $\mathbb{R}$ (with the corresponding isomorphic
ternary non-division algebra of $8$-tuples $\mathbb{H}^{\left[  3\right]
,tu}$), which we call the ternary quaternions. The subalgebra $\mathbb{H}%
_{\operatorname{div}}^{\left[  3\right]  }\subset\mathbb{H}^{\left[  3\right]
}$ of invertible matrices $\mathit{Q}^{\left[  3\right]  }$ ($\det
\mathit{Q}^{\left[  3\right]  }\neq\mathit{0}$, with all $\mathit{q}_{i}%
\neq\mathit{0}$) is the division ternary algebra of quaternions (see
{Theorem} \ref{theor-div}).
\end{Example}

\section{\textsc{Polyadic Norms}}

The division algebras $\mathbb{D}=\mathbb{R},\mathbb{C},\mathbb{H},\mathbb{O}$
are normed as vector spaces, and the corresponding Euclidean $2$-norm is
multiplicative (\ref{zz}), such that the corresponding mapping is a binary
homomorphism. It would be worthwhile to define a polyadic analog of the binary
norm $\left\Vert \ \right\Vert $ having similar properties.

\begin{Definition}
We define the polyadic ($n$-ary) norm $\left\Vert \ \ \right\Vert ^{\left[
n\right]  }$ for the $n$-ary algebra $\mathbb{D}^{\left[  n\right]  }$, that
is obtained from $\mathbb{D}$ by the matrix polyadization procedure
(\ref{zn}), as the product (in $\mathbb{R}$) of the component norms%
\begin{equation}
\left\Vert \mathit{Z}\right\Vert ^{\left[  n\right]  }=\left\Vert
\mathit{z}_{1}\right\Vert \left\Vert \mathit{z}_{2}\right\Vert \ldots
\left\Vert \mathit{z}_{n-1}\right\Vert \in\mathbb{R},\ \ \ \ \ \mathit{Z}%
\in\mathbb{D}^{\left[  n\right]  },\ \ \mathit{z}_{i}\in\mathbb{D}.
\label{znn}%
\end{equation}

\end{Definition}

\begin{Corollary}
The polyadic norm (\ref{znn}) is zero for the noninvertible elements of
$\mathbb{D}^{\left[  n\right]  }$, having some $\mathit{z}_{i}=0$.
\end{Corollary}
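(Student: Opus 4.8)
The plan is to reduce the statement to the elementary fact that a finite product of real numbers vanishes precisely when one of its factors vanishes, combined with the characterization of noninvertibility already obtained in Theorem \ref{theor-poly}. No separate machinery is needed; the whole content is the bookkeeping that connects the algebraic vanishing of a component to the metric vanishing of its norm.

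First I would recall, from the determinant argument in the proof of Theorem \ref{theor-poly}, that an element $\mathit{Z}\in\mathbb{D}^{\left[ n\right] }$ given by the cyclic shift weighted matrix (\ref{zn}) is noninvertible exactly when $\det\mathit{Z}=0$. Since this determinant is (up to sign) proportional to the product $\mathit{z}_{1}\mathit{z}_{2}\ldots\mathit{z}_{n-1}$ of the entries, and $\mathbb{D}$ has no zero divisors, noninvertibility forces at least one component $\mathit{z}_{i}$ to be zero. This is precisely the hypothesis ``having some $\mathit{z}_{i}=0$'' in the statement.

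Next I would invoke the defining property of the Euclidean component norm $\left\Vert \ \right\Vert$ on $\mathbb{D}$. Because $\left\Vert \mathit{z}\right\Vert ^{2}=\mathit{z}^{\ast}\mathit{z}$ and $\mathbb{D}$ contains no zero divisors, this norm is positive-definite, so $\left\Vert \mathit{z}_{i}\right\Vert =0$ holds if and only if $\mathit{z}_{i}=0$. Applying this to the vanishing component isolated in the previous step produces a single zero factor in the defining product (\ref{znn}).

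Finally, since the polyadic norm (\ref{znn}) is by definition the product in $\mathbb{R}$ of the component norms $\left\Vert \mathit{z}_{1}\right\Vert \ldots\left\Vert \mathit{z}_{n-1}\right\Vert$, the presence of one zero factor immediately forces $\left\Vert \mathit{Z}\right\Vert ^{\left[ n\right] }=0$. I do not expect any genuine obstacle; the only point meriting a moment of care is the translation between the two notions of vanishing, namely the algebraic condition $\mathit{z}_{i}=0$ coming from $\det\mathit{Z}=0$ and the metric condition $\left\Vert \mathit{z}_{i}\right\Vert =0$, which is exactly the positive-definiteness of the underlying Euclidean $2$-norm on $\mathbb{D}$.
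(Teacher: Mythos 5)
Your proposal is correct and matches the paper's (implicit) reasoning: the Corollary is stated without proof precisely because it is immediate from the definition (\ref{znn}) as a product of component norms, one of which vanishes when some $\mathit{z}_{i}=0$. The extra material you supply — deriving ``some $\mathit{z}_{i}=0$'' from noninvertibility via the determinant argument of Theorem \ref{theor-poly}, and the positive-definiteness of the Euclidean norm — is consistent with the paper and fills in exactly the bookkeeping the author leaves tacit.
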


Therefore, it is worthwhile to consider a polyadic norm for invertible
elements of $\mathbb{D}^{\left[  n\right]  }$ only.

\begin{Proposition}
The division $n$-ary subalgebras $\mathbb{D}_{\operatorname{div}}^{\left[
n\right]  }\subset\mathbb{D}^{\left[  n\right]  }$ are normed $n$-ary algebras
with respect to the polyadic norm $\left\Vert \ \ \right\Vert ^{\left[
n\right]  }$ (\ref{znn}).
\end{Proposition}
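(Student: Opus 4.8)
The plan is to check that $\left\Vert\ \ \right\Vert^{\left[n\right]}$ restricted to $\mathbb{D}_{\operatorname{div}}^{\left[n\right]}$ satisfies the polyadic counterparts of the defining properties (\ref{10})--(\ref{zz}) of the binary norm, the essential one being polyadic multiplicativity, i.e.\ the $n$-ary analog of (\ref{zz}). First I would note that on $\mathbb{D}_{\operatorname{div}}^{\left[n\right]}$ the norm is well defined and strictly positive: by Theorem \ref{theor-div} every such $\mathit{Z}$ has all components $\mathit{z}_i\in\mathbb{D}\setminus\left\{0\right\}$, so each binary factor satisfies $\left\Vert\mathit{z}_i\right\Vert>0$ (the Euclidean norm vanishes only at $0$), whence the product (\ref{znn}) lies in $\mathbb{R}_{>0}$ and the restriction maps $\mathbb{D}_{\operatorname{div}}^{\left[n\right]}\to\mathbb{R}_{>0}$ (the preceding Corollary accounting for its vanishing on the noninvertible elements outside this subalgebra). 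Normalization at the unit is then immediate from (\ref{en}) and (\ref{10}), giving $\left\Vert\mathit{E}^{\left[n\right]}\right\Vert^{\left[n\right]}=\left\Vert\mathit{1}\right\Vert^{\,n-1}=1$, the polyadic form of (\ref{10}).

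The central step is to establish
\[
\left\Vert\mathit{\mu}_{\mathit{Z}}^{\left[n\right]}\bigl[\mathit{Z}^{(1)},\ldots,\mathit{Z}^{(n)}\bigr]\right\Vert^{\left[n\right]}=\prod_{j=1}^{n}\left\Vert\mathit{Z}^{(j)}\right\Vert^{\left[n\right]},
\]
which I would derive by combining the component form (\ref{cy}) of the $n$-ary multiplication with iterated binary multiplicativity (\ref{zz}). By (\ref{cy}) the $k$-th component of the product is an ordered product of exactly $n$ elements of $\mathbb{D}$, one taken from each factor, the $j$-th factor contributing $\mathit{z}_{k+j-1}^{(j)}$ with component indices read cyclically in $\{1,\ldots,n-1\}$. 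Applying (\ref{zz}) $n-1$ times---the numerical value being independent of the parenthesization, which also covers the octonionic case---gives $\left\Vert\mathit{z}_k\right\Vert=\prod_{j=1}^{n}\left\Vert\mathit{z}_{k+j-1}^{(j)}\right\Vert$, and forming the product over $k=1,\ldots,n-1$ as in (\ref{znn}) turns the left-hand side into a double product of $(n-1)n$ binary norms.

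The heart of the argument, and the only point requiring genuine care, is the cyclic reindexing. For each fixed factor label $j$ the map $k\mapsto k+j-1\pmod{n-1}$ is a bijection of $\{1,\ldots,n-1\}$ onto itself (translation of a complete residue system), so as $k$ runs through its values each binary norm $\left\Vert\mathit{z}_i^{(j)}\right\Vert$, $i=1,\ldots,n-1$, is picked up exactly once. Hence the double product factorizes as $\prod_{j=1}^{n}\prod_{i=1}^{n-1}\left\Vert\mathit{z}_i^{(j)}\right\Vert=\prod_{j=1}^{n}\left\Vert\mathit{Z}^{(j)}\right\Vert^{\left[n\right]}$, which is precisely the claimed identity. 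I expect this bookkeeping---verifying that every component of every factor enters once and only once---to be the sole nontrivial obstacle; everything else reduces to the already-established binary properties.

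Finally I would read off the structural conclusion. Since $(\mathbb{R}_{>0},\,\cdot\,)$ is an abelian group, its derived $n$-ary operation (the product of $n$ positive reals) makes $\mathbb{R}_{>0}$ an $n$-ary group, and the multiplicativity just proved says exactly that $\left\Vert\ \ \right\Vert^{\left[n\right]}$ is an $n$-ary homomorphism $\mathbb{D}_{\operatorname{div}}^{\left[n\right]}\to\mathbb{R}_{>0}$. Together with the unit normalization and the polyadic homogeneity $\left\Vert\lambda\mathit{Z}\right\Vert^{\left[n\right]}=\left\vert\lambda\right\vert^{\,n-1}\left\Vert\mathit{Z}\right\Vert^{\left[n\right]}$ inherited componentwise from (\ref{11}), this shows that $\mathbb{D}_{\operatorname{div}}^{\left[n\right]}$ is a normed $n$-ary algebra with respect to $\left\Vert\ \ \right\Vert^{\left[n\right]}$, as asserted.
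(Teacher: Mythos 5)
Your proposal is correct and follows essentially the route the paper takes: the paper states this Proposition without a separate proof and establishes its content in the immediately following Proposition (properties (\ref{nlz})--(\ref{nt})) and the multiplicativity Theorem (\ref{mzz}), whose proof is exactly your rearrangement of the component norms via (\ref{cy}), (\ref{znn}) and (\ref{zz}). The only additions on your side are making the cyclic-bijection bookkeeping explicit (the paper just says ``rearrange'') and noting that parenthesization-independence of the norm covers the octonionic case -- both harmless refinements of the same argument.
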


Let us consider some properties of the polyadic norm (\ref{znn}).

\begin{Proposition}
The polyadic norm $\left\Vert \ \ \right\Vert ^{\left[  n\right]  }$
introduced above (\ref{znn}) has the following properties (for invertible
$\mathit{Z}\in\mathbb{D}^{\left[  n\right]  }$)
\begin{align}
\left\Vert \mathit{E}^{\left[  n\right]  }\right\Vert ^{\left[  n\right]  }
&  =1\in\mathbb{R},\ \ \ \ \mathit{E}^{\left[  n\right]  }\in\mathbb{D}%
^{\left[  n\right]  },\label{nlz}\\
\left\Vert \lambda\mathit{Z}\right\Vert ^{\left[  n\right]  }  &  =\left\vert
\lambda\right\vert ^{n-1}\left\Vert \mathit{Z}\right\Vert ^{\left[  n\right]
},\ \ \ \ \lambda\in\mathbb{R},\label{nl}\\
\left\Vert \mathit{Z}^{\prime}+\mathit{Z}^{\prime\prime}\right\Vert ^{\left[
n\right]  }  &  \leq\left\Vert \mathit{Z}\right\Vert ^{\left[  n\right]
}+\left\Vert \mathit{Z}\right\Vert ^{\left[  n\right]  },\ \ \mathit{Z}%
\in\mathbb{D}^{\left[  n\right]  }, \label{nt}%
\end{align}
where $\mathit{E}^{\left[  n\right]  }$ is the polyadic unit in the $n$-ary
algebra $\mathbb{D}^{\left[  n\right]  }$ (\ref{en}).
\end{Proposition}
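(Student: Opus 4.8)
The plan is to reduce all three properties to the componentwise behaviour of the Euclidean norm $\left\Vert \ \right\Vert $ on $\mathbb{D}$ recorded in (\ref{10})--(\ref{12}), since by the very definition (\ref{znn}) the polyadic norm $\left\Vert \ \right\Vert ^{\left[  n\right]  }$ is nothing but the product of the $n-1$ component norms $\left\Vert \mathit{z}_i\right\Vert $. Thus each assertion should follow by applying the corresponding scalar norm property factor by factor and then collecting the factors.

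First I would dispatch the normalization (\ref{nlz}). The polyadic unit $\mathit{E}^{\left[  n\right]  }$ of (\ref{en}) has all of its $n-1$ nonzero entries equal to $\mathit{1}\in\mathbb{A}$, so its components are $\mathit{z}_i=\mathit{1}$ for every $i$. Feeding these into (\ref{znn}) and using the scalar normalization (\ref{10}) in each slot gives $\left\Vert \mathit{E}^{\left[  n\right]  }\right\Vert ^{\left[  n\right]  }=\left\Vert \mathit{1}\right\Vert ^{n-1}=1$. For the homogeneity (\ref{nl}), scaling $\mathit{Z}$ by $\lambda\in\mathbb{R}$ multiplies every component by the same $\lambda$, so by (\ref{11}) each of the $n-1$ factors contributes one power of $\left\vert \lambda\right\vert $; collecting these identical factors yields exactly the announced $\left\vert \lambda\right\vert ^{n-1}$ out front. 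Both of these steps are purely mechanical and carry no real content beyond the product structure of (\ref{znn}).

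The genuinely delicate step, and the one I expect to be the main obstacle, is the triangle-type inequality (\ref{nt}). Componentwise the scalar triangle inequality (\ref{12}) does give $\left\Vert \mathit{z}_i^{\prime}+\mathit{z}_i^{\prime\prime}\right\Vert \leq \left\Vert \mathit{z}_i^{\prime}\right\Vert +\left\Vert \mathit{z}_i^{\prime\prime}\right\Vert $, and since all component norms are nonnegative, taking the product preserves the inequality, so $\left\Vert \mathit{Z}^{\prime}+\mathit{Z}^{\prime\prime}\right\Vert ^{\left[  n\right]  }\leq \prod_{i=1}^{n-1}\bigl(\left\Vert \mathit{z}_i^{\prime}\right\Vert +\left\Vert \mathit{z}_i^{\prime\prime}\right\Vert \bigr)$. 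The difficulty is that for $n\geq 3$ the product of sums on the right expands into mixed cross terms and is in general strictly larger than $\prod_i\left\Vert \mathit{z}_i^{\prime}\right\Vert +\prod_i\left\Vert \mathit{z}_i^{\prime\prime}\right\Vert $, so the clean subadditivity of (\ref{12}) does not survive the passage to a multiplicative norm.

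Consequently I would treat (\ref{nt}) with caution: it is immediate and in fact an equality in the degenerate binary case $n=2$, where the product (\ref{znn}) has a single factor and (\ref{nt}) collapses precisely to (\ref{12}), but for higher arity it cannot hold in the naive form and would require either an additional normalization of the admissible $\mathit{Z}^{\prime},\mathit{Z}^{\prime\prime}$ or a reinterpretation of the right-hand side. Pinning down exactly which hypotheses render the product norm subadditive is the crux of the argument; the first two properties, by contrast, hold unconditionally for every invertible $\mathit{Z}\in\mathbb{D}^{\left[  n\right]  }$ and follow in a line each.
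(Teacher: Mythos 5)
Your treatment of (\ref{nlz}) and (\ref{nl}) is correct and coincides with the paper's: the paper likewise declares the first property obvious and derives the second from the definition (\ref{znn}) together with the homogeneity (\ref{11}) applied to each of the $n-1$ factors. For (\ref{nt}) the paper's own proof is a single sentence asserting that the polyadic triangle inequality ``follows from the binary triangle inequality (\ref{12}), because of the binary addition of $\mathit{Z}$-matrices'' --- i.e.\ precisely the componentwise argument whose failure you diagnose. Your skepticism is well founded, and you could have pushed it one step further to an outright counterexample: take $n=3$, $\mathbb{D}=\mathbb{R}$, and invertible elements with components $\mathit{z}_{1}^{\prime}=1$, $\mathit{z}_{2}^{\prime}=\epsilon$, $\mathit{z}_{1}^{\prime\prime}=\epsilon$, $\mathit{z}_{2}^{\prime\prime}=1$ for small $\epsilon>0$. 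Then
\begin{equation*}
\left\Vert \mathit{Z}^{\prime}+\mathit{Z}^{\prime\prime}\right\Vert ^{\left[  3\right]  }=\left(  1+\epsilon\right)  ^{2}>2\epsilon=\left\Vert \mathit{Z}^{\prime}\right\Vert ^{\left[  3\right]  }+\left\Vert \mathit{Z}^{\prime\prime}\right\Vert ^{\left[  3\right]  },
\end{equation*}
so the stated inequality is simply false for $n\geq3$ (consistent with the general fact that a degree-$(n-1)$ homogeneous functional cannot be subadditive unless it is trivial). The cross terms in $\prod_{i}\bigl(\left\Vert \mathit{z}_{i}^{\prime}\right\Vert +\left\Vert \mathit{z}_{i}^{\prime\prime}\right\Vert \bigr)$ that you point to are exactly what the paper's argument silently discards.

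So the verdict is: parts one and two of your proposal are complete and agree with the paper; for part three you correctly identify that the componentwise route breaks down, and the gap lies in the paper's proof (and indeed in the statement of (\ref{nt}) itself, whose right-hand side is moreover garbled, with $\mathit{Z}$ appearing twice in place of $\mathit{Z}^{\prime}$ and $\mathit{Z}^{\prime\prime}$), not in any missing cleverness on your side. A repaired statement would have to weaken (\ref{nt}), e.g.\ to $\left\Vert \mathit{Z}^{\prime}+\mathit{Z}^{\prime\prime}\right\Vert ^{\left[  n\right]  }\leq\prod_{i=1}^{n-1}\bigl(\left\Vert \mathit{z}_{i}^{\prime}\right\Vert +\left\Vert \mathit{z}_{i}^{\prime\prime}\right\Vert \bigr)$, or replace the polyadic norm by its $(n-1)$-th root before asking for subadditivity.
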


\begin{proof}
The first property is obvious, the second one follows from the definition
(\ref{znn}), and the linearity of the ordinary Euclidean norm in $\mathbb{D}$
(\ref{11}). The polyadic triangle inequality (\ref{nt}) follows from the binary
triangle inequality (\ref{12}), because of the binary addition of $\mathit{Z}%
$-matrices of the cyclic block-shift form (\ref{zn}).
\end{proof}

The norms satisfying (\ref{nl}) are called norms of higher degree, and they
were investigated for the binary case in \cite{pum2011}.

The most important property of any (binary) norm is its multiplicativity
(\ref{zz}).

\begin{Theorem}
The polyadic norm $\left\Vert \ \ \right\Vert ^{\left[  n\right]  }$ defined
in (\ref{znn}) is $n$-ary multiplicative (such that the corresponding map
$\mathbb{D}^{\left[  n\right]  }\rightarrow\mathbb{R}$ is an $n$-ary
homomorphism)%
\begin{equation}
\left\Vert \mathit{\mu}_{\mathit{Z}}^{\left[  n\right]  }\left[  \overset
{n}{\overbrace{\mathit{Z}^{\prime},\mathit{Z}^{\prime\prime}\ldots
\mathit{Z}^{\prime\prime\prime}}}\right]  \right\Vert ^{\left[  n\right]
}=\overset{n}{\overbrace{\left\Vert \mathit{Z}^{\prime}\right\Vert ^{\left[
n\right]  }\cdot\left\Vert \mathit{Z}^{\prime\prime}\right\Vert ^{\left[
n\right]  }\cdot\ldots\cdot\left\Vert \mathit{Z}^{\prime\prime\prime
}\right\Vert ^{\left[  n\right]  }}},\ \ \ \ \mathit{Z}^{\prime}%
,\mathit{Z}^{\prime\prime},\mathit{Z}^{\prime\prime\prime}\in\mathbb{D}%
_{\operatorname{div}}^{\left[  n\right]  }. \label{mzz}%
\end{equation}

\end{Theorem}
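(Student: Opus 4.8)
The plan is to reduce the $n$-ary multiplicativity to the ordinary binary multiplicativity (\ref{zz}) of the Euclidean norm on $\mathbb{D}$, exploiting the explicit component form of the $n$-ary product. Since we work inside $\mathbb{D}_{\operatorname{div}}^{[n]}$, all entries are nonzero and every norm below is strictly positive, so the manipulations are well defined. First I would invoke the cyclic product form (\ref{cy}): writing the $n$ factors as $\mathit{Z}^{(1)},\ldots,\mathit{Z}^{(n)}$ with entries $\mathit{z}_i^{(j)}\in\mathbb{D}$, the product $\mathit{Z}=\mathit{\mu}_{\mathit{Z}}^{[n]}[\mathit{Z}^{(1)},\ldots,\mathit{Z}^{(n)}]$ has as its $k$-th entry a single binary product in $\mathbb{D}$ of exactly $n$ factors, one drawn from each $\mathit{Z}^{(j)}$, namely $\mathit{z}_k=\mathit{z}_k^{(1)}\mathit{z}_{k+1}^{(2)}\cdots\mathit{z}_{k}^{(n)}$, where the lower index of the $j$-th factor is $k+j-1$ read modulo $n-1$.

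Next I would apply (\ref{zz}) entrywise. Since each $\mathit{z}_k$ is a product of $n$ elements of $\mathbb{D}$, multiplicativity gives $\left\Vert\mathit{z}_k\right\Vert=\prod_{j=1}^{n}\left\Vert\mathit{z}_{k+j-1}^{(j)}\right\Vert$ (indices modulo $n-1$). Substituting into the definition (\ref{znn}) of the polyadic norm produces
\[
\left\Vert\mathit{Z}\right\Vert^{[n]}=\prod_{k=1}^{n-1}\left\Vert\mathit{z}_k\right\Vert=\prod_{k=1}^{n-1}\prod_{j=1}^{n}\left\Vert\mathit{z}_{k+j-1}^{(j)}\right\Vert.
\]

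The decisive step is a rearrangement of this double product. For each fixed factor-index $j$, as $k$ runs over $1,\ldots,n-1$ the shifted index $k+j-1$ taken modulo $n-1$ runs over the whole set $\{1,\ldots,n-1\}$ exactly once, being merely a cyclic permutation; hence the inner product over $k$ collapses to $\prod_{i=1}^{n-1}\left\Vert\mathit{z}_i^{(j)}\right\Vert=\left\Vert\mathit{Z}^{(j)}\right\Vert^{[n]}$. Interchanging the two products then yields $\left\Vert\mathit{Z}\right\Vert^{[n]}=\prod_{j=1}^{n}\left\Vert\mathit{Z}^{(j)}\right\Vert^{[n]}$, which is precisely (\ref{mzz}), so that the induced map $\mathbb{D}^{[n]}\rightarrow\mathbb{R}$ is an $n$-ary homomorphism.

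The one genuine subtlety — and the main obstacle — is the non-associative case $\mathbb{D}=\mathbb{O}$, for which the cyclic form (\ref{cy}) was stated only under associativity. Here I would observe that the octonions form a composition algebra, so that $\left\Vert xy\right\Vert=\left\Vert x\right\Vert\left\Vert y\right\Vert$ holds for every parenthesization; consequently the norm of each matrix entry of the product equals the product of the entry-norms independently of how the octonionic factors are grouped. The combinatorial counting above is therefore unaffected, and the identity (\ref{mzz}) extends verbatim to $\mathbb{O}$.
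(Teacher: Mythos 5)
Your proof is correct and follows essentially the same route as the paper: express each entry of the product via the cyclic component form (\ref{cy}), apply the binary multiplicativity (\ref{zz}) entrywise, and rearrange the resulting double product of real norms using commutativity in $\mathbb{R}$ and the fact that the cyclically shifted indices exhaust $\{1,\ldots,n-1\}$. Your explicit treatment of the octonion case is a welcome extra precaution that the paper's proof silently skips, since (\ref{cy}) is stated there only under associativity, and the composition-algebra property is indeed what makes the entrywise norm computation independent of the parenthesization.
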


\begin{proof}
Consider the component form (\ref{cy}) of each multiplier $\mathit{Z}$ in
(\ref{mzz}), then use the definition (\ref{znn}) and commutativity (as they
are in $\mathbb{R}$) and the multiplicativity (\ref{zz}) of the ordinary
binary norms $\left\Vert \ \right\Vert $ to rearrange the products of norms
from l.h.s. to r.h.s. in (\ref{mzz}). That is,
{
\begin{align}
&  \overset{n-1}{\overbrace{\left\Vert \mathit{z}_{1}^{\prime}\mathit{z}%
_{2}^{\prime\prime}\ldots\mathit{z}_{n-1}^{\prime\prime\prime}\mathit{z}%
_{1}^{\prime\prime\prime\prime}\right\Vert \cdot\left\Vert \mathit{z}%
_{2}^{\prime}\mathit{z}_{3}^{\prime\prime}\ldots\mathit{z}_{1}^{\prime
\prime\prime}\mathit{z}_{2}^{\prime\prime\prime\prime}\right\Vert \cdot
\ldots\cdot\left\Vert \mathit{z}_{n-1}^{\prime}\mathit{z}_{1}^{\prime\prime
}\ldots\mathit{z}_{n-2}^{\prime\prime\prime}\mathit{z}_{n-1}^{\prime
\prime\prime\prime}\right\Vert }}\nonumber\\
=  &  \overset{n}{\overbrace{\left(  \left\Vert \mathit{z}_{1}^{\prime
}\right\Vert \left\Vert \mathit{z}_{2}^{\prime}\right\Vert \ldots\left\Vert
\mathit{z}_{n-1}^{\prime}\right\Vert \right)  \cdot\left(  \left\Vert
\mathit{z}_{1}^{\prime\prime}\right\Vert \left\Vert \mathit{z}_{2}%
^{\prime\prime}\right\Vert \ldots\left\Vert \mathit{z}_{n-1}^{\prime\prime
}\right\Vert \right)  \cdot\ldots\cdot\left(  \left\Vert \mathit{z}%
_{1}^{\prime\prime\prime}\right\Vert \left\Vert \mathit{z}_{2}^{\prime
\prime\prime}\right\Vert \ldots\left\Vert \mathit{z}_{n-1}^{\prime\prime
\prime}\right\Vert \right)  \cdot\left(  \left\Vert \mathit{z}_{1}%
^{\prime\prime\prime\prime}\right\Vert \left\Vert \mathit{z}_{2}^{\prime
\prime\prime\prime}\right\Vert \ldots\left\Vert \mathit{z}_{n-1}^{\prime
\prime\prime\prime}\right\Vert \right)  }}.
\end{align}
}
\end{proof}

\begin{Remark}
The $n$-ary multiplicativity (\ref{mzz}) of the polyadic norm introduced in
(\ref{znn}) is independent of the concrete form of the binary norm $\left\Vert
\ \ \right\Vert $, and only the multiplicativity of the latter is needed.
\end{Remark}

\begin{Proposition}
The polyadic norm of the querelement in $n$-ary division subalgebra
$\mathbb{D}_{\operatorname{div}}^{\left[  n\right]  }$ is%
\begin{equation}
\left\Vert \widetilde{\mathit{Z}}\right\Vert ^{\left[  n\right]  }=\frac
{1}{\left(  \left\Vert \mathit{Z}\right\Vert ^{\left[  n\right]  }\right)
^{n-2}}\in\mathbb{R}_{>0},\ \ \ \forall\widetilde{\mathit{Z}}\in
\mathbb{D}_{\operatorname{div}}^{\left[  n\right]  }. \label{z2}%
\end{equation}

\end{Proposition}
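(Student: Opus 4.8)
The plan is to reduce everything to the multiplicativity of the ordinary binary norm, mirroring the way the polyadic multiplicativity (\ref{mzz}) was itself derived. First I would record the elementary consequence of the normalization (\ref{10}) and binary multiplicativity (\ref{zz}): from $\Vert \mathit{z}\Vert \,\Vert \mathit{z}^{-1}\Vert =\Vert \mathit{z}\mathit{z}^{-1}\Vert =\Vert \mathit{1}\Vert =1$ one obtains $\Vert \mathit{z}^{-1}\Vert =1/\Vert \mathit{z}\Vert$ for every nonzero $\mathit{z}\in\mathbb{D}$. Since we work inside the division subalgebra $\mathbb{D}_{\operatorname{div}}^{\left[  n\right]  }$, every component $\mathit{z}_{i}$ is invertible, so this identity is always at our disposal.

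Next I would apply the definition (\ref{znn}) of the polyadic norm to the querelement, obtaining $\Vert \widetilde{\mathit{Z}}\Vert ^{\left[  n\right]  }=\Vert \widetilde{\mathit{z}}_{1}\Vert \,\Vert \widetilde{\mathit{z}}_{2}\Vert \ldots\Vert \widetilde{\mathit{z}}_{n-1}\Vert$, and then substitute the explicit component form (\ref{zi}). The structural fact to read off from (\ref{zi}) is that each querelement component $\widetilde{\mathit{z}}_{i}$ is a product of exactly $n-2$ inverses, one factor $\mathit{z}_{j}^{-1}$ for each index $j\neq i$ (as is already visible in the five-ary real case of \textit{Example} \ref{exam-real}). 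Applying binary multiplicativity to each such product --- the resulting quantities are real numbers and therefore commute, so the possible noncommutativity of $\mathbb{D}$ plays no role --- gives $\Vert \widetilde{\mathit{z}}_{i}\Vert =\prod_{j\neq i}\Vert \mathit{z}_{j}\Vert ^{-1}$.

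It then remains to substitute and count multiplicities. In the double product $\prod_{i=1}^{n-1}\prod_{j\neq i}\Vert \mathit{z}_{j}\Vert ^{-1}$ a fixed factor $\Vert \mathit{z}_{k}\Vert ^{-1}$ appears once for every index $i$ with $i\neq k$, that is, exactly $n-2$ times. Collecting these equal factors yields $\Vert \widetilde{\mathit{Z}}\Vert ^{\left[  n\right]  }=\prod_{k=1}^{n-1}\Vert \mathit{z}_{k}\Vert ^{-(n-2)}=\bigl(\prod_{k=1}^{n-1}\Vert \mathit{z}_{k}\Vert \bigr)^{-(n-2)}$, which by (\ref{znn}) equals $\bigl(\Vert \mathit{Z}\Vert ^{\left[  n\right]  }\bigr)^{-(n-2)}$ --- precisely the claim (\ref{z2}). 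Positivity in $\mathbb{R}_{>0}$ is immediate, since each $\Vert \mathit{z}_{k}\Vert >0$ for invertible components.

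The only genuine obstacle is the exponent bookkeeping: one must verify that each of the $n-1$ querelement components contributes $n-2$ inverse factors, and that across all of them every component norm $\Vert \mathit{z}_{k}\Vert$ is hit exactly $n-2$ times. Once this counting is settled, the result is a direct transcription of the binary multiplicativity already established in (\ref{zz}).
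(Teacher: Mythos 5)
Your proof is correct and follows essentially the same route as the paper: both arguments reduce the claim to the component querelement relations and the multiplicativity of the binary norm, the only difference being that you solve for each $\left\Vert \widetilde{\mathit{z}}_{i}\right\Vert =\prod_{j\neq i}\left\Vert \mathit{z}_{j}\right\Vert ^{-1}$ explicitly from (\ref{zi}) and then count multiplicities, whereas the paper multiplies all $n-1$ norm equations (\ref{zz1}) at once to get $\left(  \left\Vert \mathit{Z}\right\Vert ^{\left[  n\right]  }\right)  ^{n-1}\left\Vert \widetilde{\mathit{Z}}\right\Vert ^{\left[  n\right]  }=\left\Vert \mathit{Z}\right\Vert ^{\left[  n\right]  }$ and cancels. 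Your exponent bookkeeping (each of the $n-1$ components contributes $n-2$ inverse factors, and each $\left\Vert \mathit{z}_{k}\right\Vert ^{-1}$ occurs exactly $n-2$ times in total) is right.
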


\begin{proof}
It follows from the component relations for the querelements $\left\Vert
\widetilde{\mathit{z}}_{i}\right\Vert $ (\ref{zd}) and multiplicativity of the
binary norm that%
\begin{align}
\overset{n}{\overbrace{\left\Vert \mathit{z}_{1}\right\Vert \left\Vert
\mathit{z}_{2}\right\Vert \ldots\left\Vert \mathit{z}_{n-1}\right\Vert
\left\Vert \widetilde{\mathit{z}}_{1}\right\Vert }}  &  =\left\Vert
\mathit{z}_{1}\right\Vert ,\nonumber\\
\overset{n}{\overbrace{\left\Vert \mathit{z}_{2}\right\Vert \left\Vert
\mathit{z}_{3}\right\Vert \ldots\left\Vert \mathit{z}_{n-1}\right\Vert
\left\Vert \mathit{z}_{1}\right\Vert \left\Vert \widetilde{\mathit{z}}%
_{2}\right\Vert }}  &  =\left\Vert \mathit{z}_{2}\right\Vert ,\nonumber\\
&  \vdots\nonumber\\
\overset{n}{\overbrace{\left\Vert \mathit{z}_{n-1}\right\Vert \left\Vert
\mathit{z}_{1}\right\Vert \ldots\left\Vert \mathit{z}_{n-2}\right\Vert
\left\Vert \widetilde{\mathit{z}}_{n-1}\right\Vert }}  &  =\left\Vert
\mathit{z}_{n-1}\right\Vert ,\ \ \ \ \ \ \mathit{z}_{i},\widetilde{\mathit{z}%
}_{i}\in\mathbb{D},\ \ \left\Vert \mathit{z}_{i}\right\Vert ,\left\Vert
\widetilde{\mathit{z}}_{i}\right\Vert \in\mathbb{R}_{>0}. \label{zz1}%
\end{align}
\hl{Multiplying} all the equations in (\ref{zz1}) and using the definition of the
polyadic norm (\ref{znn}), together with the component form (\ref{zn}) and the
querelement (\ref{zv}), we obtain%
\begin{equation}
\left(  \left\Vert \mathit{Z}\right\Vert ^{\left[  n\right]  }\right)
^{n-1}\left\Vert \widetilde{\mathit{Z}}\right\Vert ^{\left[  n\right]
}=\left\Vert \mathit{Z}\right\Vert ^{\left[  n\right]  },
\end{equation}
from which follows (\ref{z2}).
\end{proof}

\begin{Example}
In the division, the $4$-ary algebra of complex numbers $\mathbb{C}%
_{\operatorname{div}}^{\left[  4\right]  }$ from  {Example}
\ref{exam-complex} the polyadic norm becomes%
\begin{equation}
\left\Vert \mathit{Z}\right\Vert ^{\left[  4\right]  }=\sqrt{\left(  a_{1}%
^{2}+b_{1}^{2}\right)  \left(  a_{2}^{2}+b_{2}^{2}\right)  \left(  a_{3}%
^{2}+b_{3}^{2}\right)  },\ \ \ a_{i},b_{i}\in\mathbb{R}.
\end{equation}
\hl{The} polyadic norm of the querelement $\widetilde{\mathit{Z}}$ in
$\mathbb{C}^{\left[  4\right]  }$ (\ref{z2}) is%
\begin{equation}
\left\Vert \widetilde{\mathit{Z}}\right\Vert ^{\left[  4\right]  }=\frac
{1}{\left(  a_{1}^{2}+b_{1}^{2}\right)  \left(  a_{2}^{2}+b_{2}^{2}\right)
\left(  a_{3}^{2}+b_{3}^{2}\right)  },\ \ \ \ \ a_{i}^{2}+b_{i}^{2}%
\neq0,\ \ \ \ a_{i},b_{i}\in\mathbb{R}.
\end{equation}

\end{Example}

\begin{Example}
In the division ternary algebra of quaternions $\mathbb{H}_{\operatorname{div}%
}^{\left[  3\right]  }$ from  {Example} \ref{exam-quat} the polyadic
norm becomes%
\begin{equation}
\left\Vert \mathit{Q}\right\Vert ^{\left[  3\right]  }=\sqrt{\left(  a_{1}%
^{2}+b_{1}^{2}+c_{1}^{2}+d_{1}^{2}\right)  \left(  a_{2}^{2}+b_{2}^{2}%
+c_{2}^{2}+d_{2}^{2}\right)  },\ \ \ a_{i},b_{i}\in\mathbb{R}.
\end{equation}
\hl{The} polyadic norm of the querelement $\widetilde{\mathit{Q}}$ in
$\mathbb{H}_{\operatorname{div}}^{\left[  3\right]  }$ (\ref{z2}) is%
\begin{equation}
\left\Vert \widetilde{\mathit{Q}}\right\Vert ^{\left[  3\right]  }=\frac
{1}{\sqrt{\left(  a_{1}^{2}+b_{1}^{2}+c_{1}^{2}+d_{1}^{2}\right)  \left(
a_{2}^{2}+b_{2}^{2}+c_{2}^{2}+d_{2}^{2}\right)  }},\ \ \ \ \ \ a_{i}^{2}%
+b_{i}^{2}+c_{i}^{2}+d_{i}^{2}\neq0,\ \ \ \ a_{i},b_{i}\in\mathbb{R}.
\end{equation}

\end{Example}

Further properties of the polyadic norm $\left\Vert \mathit{\ \ }\right\Vert
^{\left[  n\right]  }$ can be investigated for invertible elements of concrete
$n$-ary algebras.

\section{\textsc{Polyadic Analog of the Cayley--Dickson Construction}}

The standard method of obtaining the higher hypercomplex algebras is the
Cayley--Dickson construction \cite{scha54,kor2012,fla2019}. It is well known
that all four binary division algebras $\mathbb{D}=\mathbb{R},\mathbb{C}%
,\mathbb{H},\mathbb{O}$ can be built in this way \cite{kan/sol}. Here, we
generalize the Cayley--Dickson construction to the polyadic ($n$-ary) division
algebras introduced in the previous section. As a result, the number of
polyadic division algebras becomes infinite (as opposed to just four in the
binary case), because of the arbitrary initial and final arities of the
algebras under consideration. For illustration, we present several low arity
examples, since higher arity cases become too cumbersome and difficult to see.
First, we recall in brief (just to install our notation) the ordinary (binary)
Cayley--Dickson doubling process (in our notation, which is convenient for the
polyadization procedure).

\subsection{Abstract (Tuple) Approach}

Consider the sequence of algebras $\mathbb{A}_{\ell}$, $\ell\geq0$, over the
reals, starting from $\mathbb{A}_{0}=\mathbb{R}$. The main idea is to repeat
the doubling process of the complex number construction using pairs (doubles)
(\ref{m2}) and taking into account the isomorphism (\ref{cc}) at each stage
$\mathbb{A}_{\ell}\rightarrow\mathbb{A}_{\ell+1}$. Let us denote the binary
algebra over $\mathbb{R}$ on the $\ell$-th stage with the underlying set $\left\{
\mathbb{A}_{\ell}\right\}  $ as%
\begin{equation}
\mathbb{A}_{\ell}=\left\langle \left\{  \mathbb{A}_{\ell}\right\}  \mid\left(
+\right)  ,\mu_{\ell}^{\left[  2\right]  },\left(  \ast_{\ell}\right)
\right\rangle ,\label{alm}%
\end{equation}
where $\left(  \ast_{\ell}\right)  $ is involution in $\mathbb{A}_{\ell}$ and
$\mu_{\ell}^{\left[  2\right]  }:\mathbb{A}_{\ell}\otimes\mathbb{A}_{\ell
}\rightarrow\mathbb{A}_{\ell}$ is its binary multiplication, and we also will
write $\mu_{\ell}^{\left[  2\right]  }\equiv\left(  \cdot_{\ell}\right)  $.
The corresponding algebra of doubles ($2$-tuples)%
\begin{equation}
\mathsf{v}_{2\left(  \ell\right)  }=\left(
\begin{array}
[c]{c}%
\mathit{a}_{\left(  \ell\right)  }\\
\mathit{b}_{\left(  \ell\right)  }%
\end{array}
\right)  ,\ \ \ \mathit{a}_{\left(  \ell\right)  },\mathit{b}_{\left(
\ell\right)  }\in\mathbb{A}_{\ell},\label{v2}%
\end{equation}
is denoted by%
\begin{equation}
\mathbb{A}_{\ell}^{tu}=\left\langle \left\{  \mathsf{v}_{2\left(  \ell\right)
}\right\}  \mid\left(  +\right)  ,\mu_{\mathsf{v}\left(  \ell\right)
}^{\left[  2\right]  },\left(  \ast_{\ell}^{tu}\right)  \right\rangle
,\label{alt}%
\end{equation}
where $\left(  \ast_{\ell}^{tu}\right)  $ is involution in $\mathbb{A}_{\ell
}^{tu}$ and $\mu_{\mathsf{v}\left(  \ell\right)  }^{\left[  2\right]
}:\mathbb{A}_{\ell}^{tu}\otimes\mathbb{A}_{\ell}^{tu}\rightarrow
\mathbb{A}_{\ell}^{tu}$ is the binary product of doubles. If $\ell=0$, then
the conjugation is the identity map, as it should be for the reals
$\mathbb{R}$. The addition and scalar multiplication are made componentwise in
the standard way.

In this notation, the Cayley--Dickson doubling process is defined by the
recurrent multiplication formula%
\begin{align}
&  \mu_{\mathsf{v}\left(  \ell+1\right)  }^{\left[  2\right]  }\left[  \left(
\begin{array}
[c]{c}%
\mathit{a}_{\left(  \ell\right)  }^{\prime}\\
\mathit{b}_{\left(  \ell\right)  }^{\prime}%
\end{array}
\right)  ,\left(
\begin{array}
[c]{c}%
\mathit{a}_{\left(  \ell\right)  }^{\prime\prime}\\
\mathit{b}_{\left(  \ell\right)  }^{\prime\prime}%
\end{array}
\right)  \right]  =\left(
\begin{array}
[c]{c}%
\mu_{\ell}^{\left[  2\right]  }\left[  \mathit{a}_{\left(  \ell\right)
}^{\prime},\mathit{a}_{\left(  \ell\right)  }^{\prime\prime}\right]
-\mu_{\ell}^{\left[  2\right]  }\left[  \left(  \mathit{b}_{\left(
\ell\right)  }^{\prime\prime}\right)  ^{\ast_{\ell}},\mathit{b}_{\left(
\ell\right)  }^{\prime}\right]  \\
\mu_{\ell}^{\left[  2\right]  }\left[  \mathit{b}_{\left(  \ell\right)
}^{\prime\prime},\mathit{a}_{\left(  \ell\right)  }^{\prime}\right]
+\mu_{\ell}^{\left[  2\right]  }\left[  \mathit{b}_{\left(  \ell\right)
}^{\prime},\left(  \mathit{a}_{\left(  \ell\right)  }^{\prime\prime}\right)
^{\ast_{\ell}}\right]
\end{array}
\right)  \label{mv}\\
&  =\left(
\begin{array}
[c]{c}%
\mathit{a}_{\left(  \ell\right)  }^{\prime}\cdot_{\ell}\mathit{a}_{\left(
\ell\right)  }^{\prime\prime}-\left(  \mathit{b}_{\left(  \ell\right)
}^{\prime\prime}\right)  ^{\ast_{\ell}}\cdot_{\ell}\mathit{b}_{\left(
\ell\right)  }^{\prime}\\
\mathit{b}_{\left(  \ell\right)  }^{\prime\prime}\cdot_{\ell}\mathit{a}%
_{\left(  \ell\right)  }^{\prime}+\mathit{b}_{\left(  \ell\right)  }^{\prime
}\cdot_{\ell}\left(  \mathit{a}_{\left(  \ell\right)  }^{\prime\prime}\right)
^{\ast_{\ell}}%
\end{array}
\right)  \equiv\left(
\begin{array}
[c]{c}%
\mathit{a}_{\left(  \ell+1\right)  }\\
\mathit{b}_{\left(  \ell+1\right)  }%
\end{array}
\right)  \in\mathbb{A}_{\ell+1}^{tu},\label{mv1}%
\end{align}
and the recurrent conjugation%
\begin{equation}
\left(
\begin{array}
[c]{c}%
\mathit{a}_{\left(  \ell\right)  }\\
\mathit{b}_{\left(  \ell\right)  }%
\end{array}
\right)  ^{\ast_{\ell+1}^{tu}}=\left(
\begin{array}
[c]{c}%
\mathit{a}_{\left(  \ell\right)  }^{\ast_{\ell}}\\
-\mathit{b}_{\left(  \ell\right)  }%
\end{array}
\right)  \in\mathbb{A}_{\ell+1}^{tu},\ \ \ \ \ \ \ \mathit{a}_{\left(
\ell\right)  },\mathit{b}_{\left(  \ell\right)  }\in\mathbb{A}_{\ell
}.\label{abl}%
\end{equation}

Then, we use the isomorphism (\ref{cc}) which now becomes%
\begin{equation}
\mathbb{A}_{\ell+1}^{tu}\cong\mathbb{A}_{\ell+1}.\label{ata}%
\end{equation}

To go to the next level of recursion from that obtained so far, $\mathbb{A}%
_{\ell+1}$, we use (\ref{mv})--(\ref{ata}) while changing $\ell\rightarrow
\ell+1$. The dimension of the algebra $\mathbb{A}_{\ell}$ is%
\begin{equation}
D(\ell)=2^{\ell}, \label{dl}%
\end{equation}
such that each element can be presented in the form of $2^{\ell}$-tuple of
reals%
\begin{equation}
\left(
\begin{array}
[c]{c}%
a_{\left(  \ell\right)  ,1}\\
a_{\left(  \ell\right)  ,2}\\
\vdots\\
a_{\left(  \ell\right)  ,2^{\ell}}%
\end{array}
\right)  \in\mathbb{A}_{\ell},
\end{equation}
and the conjugated $2^{\ell}$-tuple becomes%
\begin{equation}
\left(
\begin{array}
[c]{c}%
a_{\left(  \ell\right)  ,1}\\
-a_{\left(  \ell\right)  2}\\
\vdots\\
-a_{\left(  \ell\right)  ,2^{\ell}}%
\end{array}
\right)  ,\ \ \ \ a_{\left(  \ell\right)  ,k}\in\mathbb{R}.
\end{equation}

For clarity, we intentionally mark elements and operations at the $\ell$-th
level explicitly, because they really are different for different $\ell$. The
example with $\ell=0$ just obtains the algebra of complex numbers
$\mathbb{A}_{1}\equiv\mathbb{C}$ from the algebra of reals $\mathbb{A}%
_{0}\equiv\mathbb{R}$ (\ref{v})--(\ref{cc}). All the binary division algebras
$\mathbb{D}=\mathbb{R},\mathbb{C},\mathbb{H},\mathbb{O}$ can be obtained by
the Cayley--Dickson construction \cite{kan/sol}.

\subsection{Concrete (Hyperembedding) Approach\label{subsec-concrete}}

Alternatively, one can reparametrize the pairs (\ref{v2}) satisfying the
complex-like multiplication (\ref{mv}), as the field extension $\widehat
{\mathbb{A}}_{\ell}=\mathbb{A}_{\ell}\left(  \mathit{i}_{\ell}\right)  $ by
one complex-like unit $\mathit{i}_{\ell}$ on each $\ell$-th stage of
iteration, $\ell\geq0$. The Cayley--Dickson doubling process is given by the
iterations%
\begin{equation}
\widehat{\mathbb{A}}_{\ell+1}=\left\langle \left\{  \mathbb{A}_{\ell}\left(
\mathit{i}_{\ell}\right)  \right\}  \mid\left(  +\right)  ,\widehat{\mu}%
_{\ell+1}^{\left[  2\right]  },\left(  \ast_{\ell+1}\right)  \right\rangle
\left(  \mathit{i}_{\ell+1}\right)  ,\ \ \ \mathit{i}_{\ell}^{2}%
=-1,\ \ \ \mathit{i}_{\ell+1}^{2}=-1,\ \ \ \mathit{i}_{\ell+1}\mathit{i}%
_{\ell}=-\mathit{i}_{\ell}\mathit{i}_{\ell+1}. \label{aa}%
\end{equation}

\hl{In} a component form, it is given by%
\begin{equation}
\mathit{z}_{\left(  \ell+1\right)  }=\mathit{z}_{\left(  \ell\right)
,1}+\mathit{z}_{\left(  \ell\right)  ,2}\cdot_{\ell+1}\mathit{i}_{\ell
+1}=\mathit{z}_{\left(  \ell\right)  ,1}+\widehat{\mu}_{\ell+1}^{\left[
2\right]  }\left[  \mathit{z}_{\left(  \ell\right)  ,2},\mathit{i}_{\ell
+1}\right]  \in\widehat{\mathbb{A}}_{\ell+1},\ \ \ \mathit{z}_{\left(
\ell\right)  ,1},\mathit{z}_{\left(  \ell\right)  ,2}\in\widehat{\mathbb{A}%
}_{\ell}. \label{zl}%
\end{equation}

The product in $\widehat{\mathbb{A}}_{\ell+1}$ can be expressed through the
product of the previous stage from $\widehat{\mathbb{A}}_{\ell}$ and
conjugation by using the anticommutation of the imaginary units from different
stages $\mathit{i}_{\ell+1}\mathit{i}_{\ell}=-\mathit{i}_{\ell}\mathit{i}%
_{\ell+1}$ (see (\ref{aa})). Thus, we obtain the standard complex-like
multiplication (see (\ref{m2}) and (\ref{mv1})) on each $\ell$-th stage of the
Cayley--Dickson doubling process
\vspace{-6pt}
 \begin{align}
&  \mathit{z}_{\left(  \ell+1\right)  }^{\prime}\cdot_{\ell+1}\mathit{z}%
_{\left(  \ell+1\right)  }^{\prime\prime}\nonumber\\
&  =\left(  \mathit{z}_{\left(  \ell\right)  ,1}^{\prime}\cdot_{\ell
}\mathit{z}_{\left(  \ell\right)  ,1}^{\prime\prime}-\left(  \mathit{z}%
_{\left(  \ell\right)  ,2}^{\prime\prime}\right)  ^{\ast_{\ell}}\cdot_{\ell
}\mathit{z}_{\left(  \ell\right)  ,2}^{\prime}\right)  +\left(  \mathit{z}%
_{\left(  \ell\right)  ,2}^{\prime\prime}\cdot_{\ell}\mathit{z}_{\left(
\ell\right)  ,1}^{\prime}+\mathit{z}_{\left(  \ell\right)  ,2}^{\prime}%
\cdot_{\ell}\left(  \mathit{z}_{\left(  \ell\right)  ,1}^{\prime\prime
}\right)  ^{\ast_{\ell}}\right)  \cdot_{\ell+1}\mathit{i}_{\ell+1},
\label{zzl}%
\end{align}
or with the manifest form of different stage multiplications $\widehat{\mu
}_{\ell}^{\left[  2\right]  }$ and $\widehat{\mu}_{\ell+1}^{\left[  2\right]
}$ (needed to go on higher arities)%
\begin{align}
\widehat{\mu}_{\ell+1}^{\left[  2\right]  }\left[  \mathit{z}_{\left(
\ell+1\right)  }^{\prime},\mathit{z}_{\left(  \ell+1\right)  }^{\prime\prime
}\right]   &  =\left(  \widehat{\mu}_{\ell}^{\left[  2\right]  }\left[
\mathit{z}_{\left(  \ell\right)  ,1}^{\prime},\mathit{z}_{\left(  \ell\right)
,1}^{\prime\prime}\right]  -\widehat{\mu}_{\ell}^{\left[  2\right]  }\left[
\left(  \mathit{z}_{\left(  \ell\right)  ,2}^{\prime\prime}\right)
^{\ast_{\ell}},\mathit{z}_{\left(  \ell\right)  ,2}^{\prime}\right]  \right)
\nonumber\\
&  +\widehat{\mu}_{\ell+1}^{\left[  2\right]  }\left[  \left(  \widehat{\mu
}_{\ell}^{\left[  2\right]  }\left[  \mathit{z}_{\left(  \ell\right)
,2}^{\prime\prime},\mathit{z}_{\left(  \ell\right)  ,1}^{\prime}\right]
+\widehat{\mu}_{\ell}^{\left[  2\right]  }\left[  \mathit{z}_{\left(
\ell\right)  ,2}^{\prime},\left(  \mathit{z}_{\left(  \ell\right)  ,1}%
^{\prime\prime}\right)  ^{\ast_{\ell}}\right]  \right)  ,\mathit{i}_{\ell
+1}\right]  .
\end{align}

Let us consider the example of quaternion construction in our notation.

\begin{Example}
\label{exam-quat1}In the case of quaternions $\ell=1$ and $\widehat
{\mathbb{A}}_{\ell+1}=\widehat{\mathbb{A}}_{2}=\mathbb{H}$, while
$\widehat{\mathbb{A}}_{\ell}=\widehat{\mathbb{A}}_{1}=\mathbb{C}$. The
recurrent relations (\ref{zl}) for $\ell=0,1$ become%
\begin{align}
\mathit{z}_{\left(  2\right)  }  &  =\mathit{z}_{\left(  1\right)
,1}+\mathit{z}_{\left(  1\right)  ,2}\cdot_{2}\mathit{i}_{2},\ \ \ \mathit{z}%
_{\left(  1\right)  ,1},\mathit{z}_{\left(  1\right)  ,2}\in\widehat
{\mathbb{A}}_{1}=\mathbb{C},\ \mathit{z}_{\left(  2\right)  },\mathit{i}%
_{2}\in\mathbb{H},\label{zz2}\\
\mathit{z}_{\left(  1\right)  ,1}  &  =\mathit{z}_{\left(  0\right)
,11}+\mathit{z}_{\left(  0\right)  ,12}\cdot_{1}\mathit{i}_{1}%
,\ \ \ \mathit{z}_{\left(  0\right)  ,11},\mathit{z}_{\left(  0\right)
,12}\in\widehat{\mathbb{A}}_{0}=\mathbb{R},\ \ \mathit{z}_{\left(  1\right)
,1},\mathit{i}_{1}\in\mathbb{C},\label{z11}\\
\mathit{z}_{\left(  1\right)  ,2}  &  =\mathit{z}_{\left(  0\right)
,21}+\mathit{z}_{\left(  0\right)  ,22}\cdot_{1}\mathit{i}_{1}%
,\ \ \ \mathit{z}_{\left(  0\right)  ,21},\mathit{z}_{\left(  0\right)
,22}\in\widehat{\mathbb{A}}_{0}=\mathbb{R},\ \ \mathit{z}_{\left(  1\right)
,2},\mathit{i}_{1}\in\mathbb{C}. \label{z12}%
\end{align}

\hl{After} the substitution of (\ref{z11}) and (\ref{z12}) into (\ref{zz2}), we obtain the
expression of quaternions with real coefficients $\mathit{z}_{\left(
0\right)  ,\alpha\beta}\in\mathbb{R}$, $\alpha,\beta=1,2$, and two imagery
units (from different parts of $\mathbb{H}$) $\mathit{i}_{1}\in
\mathbb{C\setminus R}\subset\mathbb{H}$ and $\mathit{i}_{2}\in\mathbb{H}%
\setminus\mathbb{C}$%
\begin{align}
\mathit{z}_{\left(  2\right)  }  &  =\mathit{z}_{\left(  1\right)
,1}+\mathit{z}_{\left(  1\right)  ,2}\cdot_{2}\mathit{i}_{2}=\left(
\mathit{z}_{\left(  0\right)  ,11}+\mathit{z}_{\left(  0\right)  ,12}\cdot
_{1}\mathit{i}_{1}\right)  +\left(  \mathit{z}_{\left(  0\right)
,21}+\mathit{z}_{\left(  0\right)  ,22}\cdot_{1}\mathit{i}_{1}\right)
\cdot_{2}\mathit{i}_{2}\nonumber\\
&  =\mathit{z}_{\left(  0\right)  ,11}+\mathit{z}_{\left(  0\right)  ,12}%
\cdot_{1}\mathit{i}_{1}+\mathit{z}_{\left(  0\right)  ,21}\cdot_{2}%
\mathit{i}_{2}+\left(  \mathit{z}_{\left(  0\right)  ,22}\cdot_{1}%
\mathit{i}_{1}\right)  \cdot_{2}\mathit{i}_{2}. \label{z22}%
\end{align}
\hl{To} return to the standard notation (\ref{qa}), we put $\mathit{z}_{\left(
2\right)  }=\mathit{q}$, $\mathit{z}_{\left(  0\right)  ,11}=a$,
$\mathit{z}_{\left(  0\right)  ,12}=b$, $\mathit{z}_{\left(  0\right)  ,21}%
=c$, $\mathit{z}_{\left(  0\right)  ,22}=d$, $\mathit{i}_{1}=\mathit{i}%
\in\mathbb{C}$, $\mathit{i}_{2}=\mathit{j}\in\mathbb{H\setminus C}$,
$\mathit{i}_{1}\cdot_{2}\mathit{i}_{2}=\mathit{k}\in\mathbb{H}$ and obtain
$\mathit{q}=a+b\mathit{i}+c\mathit{j}+d\mathit{k}$. Similarly, the
complex-like multiplication (\ref{zzl}) can be also applied twice with
$\ell=0,1$ to obtain the quaternion multiplication in terms of real
coefficients (\ref{mv4}).
\end{Example}

\subsection{Polyadic Cayley--Dickson Process}

Now, we provide a generalization of the Cayley-Dickson construction to the
polyadic case in the framework of the second (embedding) approach using the
field extension formalism (see {Section} \ref{subsec-concrete}). The
main iteration relation (\ref{aa}) will now contain, instead of binary
hypercomplex algebras $\widehat{\mathbb{A}}_{\ell}$, $n$-ary algebras
$\widehat{\mathbb{A}}_{\ell}^{\left[  n_{\ell}\right]  }$ at each stage.

\begin{Definition}
The polyadic Cayley--Dickson process is defined as the following iteration on
$n$-ary algebras
\begin{align}
\widehat{\mathbb{A}}_{\ell+1}^{\left[  n_{\ell+1}\right]  }  &  =\left\langle
\left\{  \mathbb{A}_{\ell}^{\left[  n_{\ell}\right]  }\left(  \mathit{i}%
_{\ell}\right)  \right\}  \mid\left(  +\right)  ,\widehat{\mu}_{\ell
+1}^{\left[  n_{\ell+1}\right]  }\right\rangle \left(  \mathit{i}_{\ell
+1}\right)  ,\nonumber\\
\mathit{i}_{\ell}^{2}  &  =-1,\ \ \ \mathit{i}_{\ell+1}^{2}%
=-1,\ \ \ \mathit{i}_{\ell+1}\mathit{i}_{\ell}=-\mathit{i}_{\ell}%
\mathit{i}_{\ell+1},\ \ \mathit{i}_{\ell}\in\mathbb{A}_{\ell+1}^{\left[
n_{\ell+1}\right]  },\ \ n_{\ell}\geq2,\ \ \ell\geq0. \label{anl}%
\end{align}

\end{Definition}

The concrete representation of the $\ell$-th stage $n$-ary algebras
$\widehat{\mathbb{A}}_{\ell}^{\left[  n_{\ell}\right]  }$ is not important for
the general recurrence formula (\ref{anl}). Nevertheless, here we will use
matrix polyadization to obtain higher $n$-ary non-division algebras
({Section} \ref{sec-matr}). First, we will need the obvious

\begin{Lemma}
\label{lem-monom}The embedding of a block-monomial matrix into a
block-monomial matrix gives a block-monomial matrix.
\end{Lemma}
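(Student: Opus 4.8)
The plan is to make the notion of \emph{block-monomial} explicit in terms of an associated permutation and then simply track what the embedding does to that permutation. First I would fix the definition: a block matrix $M=\left(M_{ij}\right)_{i,j=1}^{p}$ is block-monomial if every block-row and every block-column contains exactly one nonzero block; equivalently, there is a permutation $\sigma\in S_{p}$ with $M_{ij}\neq0$ if and only if $j=\sigma\left(i\right)$. The cyclic-shift weighted matrices (\ref{zn}) are exactly the special case in which $\sigma$ is the single $\left(n-1\right)$-cycle, so this definition is the right level of generality for the Cayley--Dickson iteration (\ref{anl}).

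Next I would describe the embedding concretely. Embedding a block-monomial matrix into a block-monomial matrix means replacing each nonzero outer block $M_{i,\sigma\left(i\right)}$ by a block-monomial matrix $N^{\left(i\right)}=\left(N^{\left(i\right)}_{ab}\right)_{a,b=1}^{q}$ with associated permutation $\tau_{i}\in S_{q}$, thereby refining the block structure. The refined blocks are then indexed by pairs $\left(i,a\right)$ on the row side and $\left(j,b\right)$ on the column side, and the refined block in position $\bigl(\left(i,a\right),\left(j,b\right)\bigr)$ is nonzero precisely when both $j=\sigma\left(i\right)$ and $b=\tau_{i}\left(a\right)$ hold.

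The core of the argument is then to show that the induced map
\begin{equation}
\Sigma:\left(i,a\right)\longmapsto\bigl(\sigma\left(i\right),\tau_{i}\left(a\right)\bigr)
\end{equation}
is a bijection of the index set $\left\{1,\dots,p\right\}\times\left\{1,\dots,q\right\}$. Injectivity holds because $\sigma\left(i\right)=\sigma\left(i'\right)$ forces $i=i'$ (as $\sigma$ is a permutation), after which $\tau_{i}\left(a\right)=\tau_{i}\left(a'\right)$ forces $a=a'$; surjectivity holds because an arbitrary target $\left(j,b\right)$ is the image of $\bigl(\sigma^{-1}\left(j\right),\tau_{\sigma^{-1}\left(j\right)}^{-1}\left(b\right)\bigr)$. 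Since $\Sigma$ is a bijection, every refined block-row and every refined block-column contains exactly one nonzero refined block, and therefore the embedded matrix is block-monomial, which is the assertion.

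The step I expect to need the most care is the bijectivity of $\Sigma$, and specifically the bookkeeping that the inner permutations $\tau_{i}$ are allowed to depend on the outer row $i$: one must check that this $i$-dependence cannot spoil injectivity across distinct outer rows. Once it is observed that the outer permutation $\sigma$ already separates the rows \emph{before} any inner permutation acts, this is immediate, which is precisely why the statement deserves the label ``obvious''. The only genuine content is the correct indexing of the refined blocks; no properties of the matrix entries beyond their vanishing pattern are used, so the result is independent of the underlying (binary or $n$-ary) algebra $\mathbb{A}$.
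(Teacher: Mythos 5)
Your argument is correct. Note that the paper itself offers no proof of this Lemma: it is introduced with the phrase ``we will need the obvious'' and stated without justification, so there is no argument to compare yours against. What you have written is a complete and accurate formalization of why the statement is obvious: encoding a block-monomial matrix by the permutation $\sigma$ supported on its nonzero blocks, observing that the embedding refines the index set to pairs $\left(i,a\right)$ with support given by the map $\Sigma\left(i,a\right)=\bigl(\sigma\left(i\right),\tau_{i}\left(a\right)\bigr)$, and checking that $\Sigma$ is a bijection (with the correct remark that the $i$-dependence of the inner permutations $\tau_{i}$ is harmless because $\sigma$ separates outer rows first). This is exactly the bookkeeping the author is implicitly relying on when building the polyadic block-shift tower in (\ref{ad}), where the inner permutations are all cyclic shifts. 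One small caveat: you take all inner matrices $N^{\left(i\right)}$ to be of a common block size $q$, which is what the tower construction requires (all blocks at stage $\ell$ have size $n_{\ell}-1$ by (\ref{nl1})); if the inner sizes were allowed to vary, the block-column partition would need a consistency hypothesis, but that generality is not needed here.
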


\begin{Corollary}
If the binary Cayley--Dickson construction gives a division algebra, then the
corresponding polyadic Cayley--Dickson process gives a nonderived $n$-ary
non-division algebra, because of its noninvertible elements.
\end{Corollary}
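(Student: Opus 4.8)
The plan is to reduce the statement to the mechanism already exploited in Theorem~\ref{theor-poly}, now propagated along the iteration~(\ref{anl}). First I would make explicit that, in the concrete (embedding) realization of Section~\ref{subsec-concrete}, each stage algebra $\widehat{\mathbb{A}}_{\ell}^{\left[n_{\ell}\right]}$ is presented by the cyclic shift weighted, hence block-monomial, matrices of the shape~(\ref{zn}) coming from matrix polyadization (Section~\ref{sec-matr}). The Cayley--Dickson step~(\ref{anl}) adjoins the new unit $\mathit{i}_{\ell+1}$ and embeds the block-monomial matrix of $\mathbb{A}_{\ell}^{\left[n_{\ell}\right]}$ as a block inside the larger cyclic shift pattern of $\widehat{\mathbb{A}}_{\ell+1}^{\left[n_{\ell+1}\right]}$. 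By Lemma~\ref{lem-monom} this composite is again block-monomial, and an induction on $\ell$ then shows that every element produced by the process is block-monomial.

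Having fixed the block-monomial form, I would reuse the determinant computation from the proof of Theorem~\ref{theor-poly}: the determinant of a block-monomial matrix is, up to sign, the product of its nonzero block entries. Consequently any element carrying at least one vanishing block entry --- yet a nonzero matrix overall, which is realized simply by setting one generating entry to $0$ while keeping another nonzero --- is singular, and therefore noninvertible. Thus $\widehat{\mathbb{A}}_{\ell}^{\left[n_{\ell}\right]}$ contains nonzero noninvertible elements and cannot be a division algebra, which is exactly the assertion ``non-division because of its noninvertible elements.'' The nonderived character is then immediate: as in~(\ref{mz}), the $n$-ary multiplication at each stage is the ordinary product of precisely $n$ such matrices, while the set~(\ref{zn}) fails to close under any smaller number of factors, so the operation is not the composition of a binary one.

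The step I expect to be delicate is not the determinant bookkeeping but the verification that the embedding~(\ref{anl}) genuinely stays inside the block-monomial class at every stage, that is, that the interaction of the adjoined units $\mathit{i}_{\ell}$ with the cyclic shift structure preserves monomiality; this is precisely the content of Lemma~\ref{lem-monom}, so once that lemma is granted the induction closes. A secondary point concerns the nonassociative terminal case built over $\mathbb{O}$, where a literal matrix determinant is unavailable. There I would instead read off noninvertibility from the polyadic norm~(\ref{znn}), which is the product of the component norms and hence vanishes precisely when some block entry is zero, these being exactly the elements flagged as noninvertible by the Corollary following its definition. Either route produces nonzero noninvertible elements and completes the argument.
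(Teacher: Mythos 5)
Your proposal is correct and follows essentially the same route the paper intends: the Corollary is placed immediately after Lemma \ref{lem-monom} precisely because block-monomiality propagates through the embedding \eqref{anl}, after which the vanishing-entry/determinant mechanism of Theorem \ref{theor-poly} supplies the nonzero noninvertible elements. Your added details (the explicit induction on $\ell$ and the polyadic-norm fallback for the nonassociative octonion stage) merely make explicit what the paper leaves implicit.
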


Thus, the structure of the general algebra obtained by the polyadic
Cayley--Dickson process is richer than a one block-shift monomial matrix
(\ref{zn}), it is the \textquotedblleft tower\textquotedblright\ of such
monomial matrices of size $\left(  n_{\ell}-1\right)  \times\left(  n_{\ell
}-1\right)  $ on $\ell$-th stage embedded one into another. The connection
between near arities (and matrix sizes) is%
\begin{equation}
n_{\ell+1}=\varkappa\left(  n_{\ell}-1\right)  +1, \label{nl1}%
\end{equation}
where $\varkappa$ is the polyadic power (\ref{al}) \cite{duplij2022}.

\begin{Proposition}
If the number of stages of the polyadic Cayley--Dickson process is $\ell$, then
the dimension of the final algebra is (cf. (\ref{dl}))%
\begin{equation}
D_{CD}(\ell)=D(\mathbb{A}_{CD}^{\left[  n_{0},n_{1},\ldots,n_{\ell}\right]
})=2^{\ell}\left(  n_{0}-1\right)  \left(  n_{1}-1\right)  \ldots\left(
n_{\ell}-1\right)  \label{dnn}%
\end{equation}
where $n_{i}$ are the arities of the intermediate algebras. The size of the
final matrix becomes%
\begin{equation}
\left(  n_{0}-1\right)  \left(  n_{1}-1\right)  \ldots\left(  n_{\ell
}-1\right)  \times\left(  n_{0}-1\right)  \left(  n_{1}-1\right)
\ldots\left(  n_{\ell}-1\right)  .
\end{equation}

\end{Proposition}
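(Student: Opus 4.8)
The plan is to prove both formulas simultaneously by induction on the number of stages $\ell$, isolating at each step the two independent sources of dimensional growth in the iteration (\ref{anl}): the Cayley--Dickson doubling contributed by the new unit $\mathit{i}_{\ell+1}$, and the matrix polyadization to the new arity $n_{\ell+1}$. For the base case $\ell=0$ the process starts from $\mathbb{A}_0=\mathbb{R}$ (dimension $1$) and performs only the polyadization to arity $n_0$, so that $\widehat{\mathbb{A}}_0^{[n_0]}=\mathbb{R}^{[n_0]}$ is realized by the cyclic block-shift matrix (\ref{zn}) of size $(n_0-1)\times(n_0-1)$ and, by the dimension formula (\ref{dn}), has dimension $1\cdot(n_0-1)=2^0(n_0-1)=D_{CD}(0)$. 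Both claimed formulas thus hold at $\ell=0$.

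For the inductive step I would pass from stage $\ell$ to stage $\ell+1$ using (\ref{anl}), which applies two operations. The field extension by $\mathit{i}_{\ell+1}$ is precisely the Cayley--Dickson doubling of the previous-stage algebra; as in the binary recursion leading to (\ref{dl}) it forms pairs and hence multiplies the real dimension by $2$, while enlarging only the underlying binary scalar algebra and leaving the block-monomial matrix shape untouched. The matrix polyadization to arity $n_{\ell+1}$ then multiplies the dimension by $(n_{\ell+1}-1)$ through (\ref{dn}). Combining the two gives the recurrence $D_{CD}(\ell+1)=2\,(n_{\ell+1}-1)\,D_{CD}(\ell)$, and substituting the inductive hypothesis $D_{CD}(\ell)=2^{\ell}\prod_{i=0}^{\ell}(n_i-1)$ yields $D_{CD}(\ell+1)=2^{\ell+1}\prod_{i=0}^{\ell+1}(n_i-1)$, which is (\ref{dnn}). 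For the matrix size, only the polyadization enlarges the matrix, and Lemma \ref{lem-monom} guarantees that embedding the stage-$\ell$ block-monomial matrix of size $\prod_{i=0}^{\ell}(n_i-1)$ into the new $(n_{\ell+1}-1)\times(n_{\ell+1}-1)$ block-monomial matrix again produces a block-monomial matrix, now of size $\prod_{i=0}^{\ell+1}(n_i-1)$, closing the induction for the size formula as well.

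I expect the only real difficulty to be the careful bookkeeping of the two multiplicative factors. One must check that the exponent of $2$ comes out to $\ell$ and not $\ell+1$: the initial stage performs a polyadization but no doubling, so there are exactly $\ell$ doublings, occurring at the transitions into stages $1,\dots,\ell$, against $\ell+1$ polyadizations carried out at the arities $n_0,\dots,n_\ell$. One must likewise keep clear that the doubling factor $2$ acts on the dimension of the underlying binary algebra (which reaches $2^{\ell}$ after $\ell$ doublings, cf. (\ref{dl})) and therefore enters the total real dimension (\ref{dnn}) but is absent from the size formula, the matrix size being governed by the arities alone; Lemma \ref{lem-monom} is exactly the tool that licenses multiplying these sizes across the nested stages of the tower.
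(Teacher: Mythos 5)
Your proof is correct and follows essentially the same route as the paper: the paper also counts parameters stage by stage, with the $0$-th stage contributing only the factor $(n_0-1)$ (no doubling) and each subsequent stage contributing a factor $2$ from the field extension and $(n_i-1)$ from the block-monomial structure, concluding with an "and so on" where you write a formal induction. Your explicit bookkeeping of why the exponent of $2$ is $\ell$ rather than $\ell+1$, and your use of Lemma \ref{lem-monom} for the matrix-size claim, only make explicit what the paper leaves implicit.
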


\begin{proof}
On the $\ell$-th stage of the polyadic Cayley--Dickson process, each
block-monomial $\left(  n_{\ell}-1\right)  \times\left(  n_{\ell}-1\right)  $
matrix has $\left(  n_{\ell}-1\right)  $ nonzero blocks of the cycle-shift
shape (\ref{zn}). The $0$-th stage corresponds to reals, which gives
$2^{0}\left(  n_{0}-1\right)  $ parameters. Then, the simple field extension
$\ell=1$ (\ref{zl}) with blocks of reals gives $\left(  n_{0}-1\right)
\cdot2\left(  n_{1}-1\right)  $ parameters and so on. Thus, the $\ell$-th
stage is given by (\ref{dnn}).
\end{proof}

More concretely, for the nonderived $n$-ary algebras, we have the dimensions%
\begin{equation}%
\begin{array}
[c]{ll}%
\mathbb{R}^{\left[  n_{0}\right]  }, & D(\mathbb{R}^{\left[  n_{0}\right]
})=\left(  n_{0}-1\right)  ,\\
\mathbb{C}_{CD}^{\left[  n_{0},n_{1}\right]  }=\mathbb{R}_{CD}^{\left[
n_{0}\right]  }\left(  \mathit{i}_{1}\right)  , & D(\mathbb{C}_{CD}^{\left[
n_{0},n_{1}\right]  })=2\left(  n_{0}-1\right)  \left(  n_{1}-1\right)  ,\\
\mathbb{H}_{CD}^{\left[  n_{0},n_{1},n_{2}\right]  }=\mathbb{C}_{CD}^{\left[
n_{0},n_{1}\right]  }\left(  \mathit{i}_{2}\right)  , & D(\mathbb{H}%
_{CD}^{\left[  n_{0},n_{1},n_{2}\right]  })=2^{2}\left(  n_{0}-1\right)
\left(  n_{1}-1\right)  \left(  n_{2}-1\right)  ,\\
\mathbb{O}_{CD}^{\left[  n_{0},n_{1},n_{2},n_{3}\right]  }=\mathbb{H}%
_{CD}^{\left[  n_{0},n_{1},n_{2}\right]  }\left(  \mathit{i}_{3}\right)  , &
D(\mathbb{O}_{CD}^{\left[  n_{0},n_{1},n_{2},n_{3}\right]  })=2^{3}\left(
n_{0}-1\right)  \left(  n_{1}-1\right)  \left(  n_{2}-1\right)  \left(
n_{3}-1\right)  ,\\
\vdots & \vdots\\
\mathbb{A}_{CD,\ell}^{\left[  n_{0},n_{1},\ldots,n_{\ell}\right]  }%
=\mathbb{A}_{CD}^{\left[  n_{0},n_{1},\ldots,n_{\ell-1}\right]  }\left(
\mathit{i}_{\ell}\right)  , & D(\mathbb{A}_{CD}^{\left[  n_{0},n_{1}%
,\ldots,n_{\ell}\right]  })=2^{\ell}\left(  n_{0}-1\right)  \left(
n_{1}-1\right)  \ldots\left(  n_{\ell}-1\right)  .
\end{array}
\label{ad}%
\end{equation}

The starting algebra $\mathbb{R}^{\left[  n_{0}\right]  }$ is presented by the
$\mathit{Z}$-matrix (\ref{zn}) with $n=n_{0}$ and real entries ($\mathbb{A}%
=\mathbb{R}$).

\begin{Definition}
The sequence of embedded cyclic shift block matrices in (\ref{ad}) will be
called a polyadic block-shift tower.
\end{Definition}

\begin{Remark}
The shapes of the final matrices (\ref{ad}) are different from the cyclic
shift weighted matrices (\ref{zn}), but nevertheless, all the intermediate
blocks are cyclic shift block matrices. Adjacent (in $\ell$) matrices do not
have an arbitrary size, but are connected by (\ref{nl1}).
\end{Remark}

\begin{Example}
To clarify the above general formulas, we provide the shape of polyadic
quaternion matrices with $\ell=2$, and $n_{0}=5$, $n_{1}=3$, $n_{2}=4$,
as {
\begin{equation}
\left(
\begin{tabular}
[c]{cccccccccccccccccccccccc}\cline{9-16}%
$\cdot$ & $\cdot$ & $\cdot$ & $\cdot$ & $\cdot$ & $\cdot$ & $\cdot$ & $\cdot$
& \multicolumn{1}{||c}{$\cdot$} & $\cdot$ & $\cdot$ & $\cdot$ &
\multicolumn{1}{|c}{$\cdot$} & $\star$ & $\cdot$ & $\cdot$ &
\multicolumn{1}{||c}{$\cdot$} & $\cdot$ & $\cdot$ & $\cdot$ & $\cdot$ &
$\cdot$ & $\cdot$ & $\cdot$\\
$\cdot$ & $\cdot$ & $\cdot$ & $\cdot$ & $\cdot$ & $\cdot$ & $\cdot$ & $\cdot$
& \multicolumn{1}{||c}{$\cdot$} & $\cdot$ & $\cdot$ & $\cdot$ &
\multicolumn{1}{|c}{$\cdot$} & $\cdot$ & $\star$ & $\cdot$ &
\multicolumn{1}{||c}{$\cdot$} & $\cdot$ & $\cdot$ & $\cdot$ & $\cdot$ &
$\cdot$ & $\cdot$ & $\cdot$\\
$\cdot$ & $\cdot$ & $\cdot$ & $\cdot$ & $\cdot$ & $\cdot$ & $\cdot$ & $\cdot$
& \multicolumn{1}{||c}{$\cdot$} & $\cdot$ & $\cdot$ & $\cdot$ &
\multicolumn{1}{|c}{$\cdot$} & $\cdot$ & $\cdot$ & $\star$ &
\multicolumn{1}{||c}{$\cdot$} & $\cdot$ & $\cdot$ & $\cdot$ & $\cdot$ &
$\cdot$ & $\cdot$ & $\cdot$\\
$\cdot$ & $\cdot$ & $\cdot$ & $\cdot$ & $\cdot$ & $\cdot$ & $\cdot$ & $\cdot$
& \multicolumn{1}{||c}{$\cdot$} & $\cdot$ & $\cdot$ & $\cdot$ &
\multicolumn{1}{|c}{$\star$} & $\cdot$ & $\cdot$ & $\cdot$ &
\multicolumn{1}{||c}{$\cdot$} & $\cdot$ & $\cdot$ & $\cdot$ & $\cdot$ &
$\cdot$ & $\cdot$ & $\cdot$\\
$\cdot$ & $\cdot$ & $\cdot$ & $\cdot$ & $\cdot$ & $\cdot$ & $\cdot$ & $\cdot$
& \multicolumn{1}{||c}{$\cdot$} & $\star$ & $\cdot$ & $\cdot$ &
\multicolumn{1}{|c}{$\cdot$} & $\cdot$ & $\cdot$ & $\cdot$ &
\multicolumn{1}{||c}{$\cdot$} & $\cdot$ & $\cdot$ & $\cdot$ & $\cdot$ &
$\cdot$ & $\cdot$ & $\cdot$\\
$\cdot$ & $\cdot$ & $\cdot$ & $\cdot$ & $\cdot$ & $\cdot$ & $\cdot$ & $\cdot$
& \multicolumn{1}{||c}{$\cdot$} & $\cdot$ & $\star$ & $\cdot$ &
\multicolumn{1}{|c}{$\cdot$} & $\cdot$ & $\cdot$ & $\cdot$ &
\multicolumn{1}{||c}{$\cdot$} & $\cdot$ & $\cdot$ & $\cdot$ & $\cdot$ &
$\cdot$ & $\cdot$ & $\cdot$\\
$\cdot$ & $\cdot$ & $\cdot$ & $\cdot$ & $\cdot$ & $\cdot$ & $\cdot$ & $\cdot$
& \multicolumn{1}{||c}{$\cdot$} & $\cdot$ & $\cdot$ & $\star$ &
\multicolumn{1}{|c}{$\cdot$} & $\cdot$ & $\cdot$ & $\cdot$ &
\multicolumn{1}{||c}{$\cdot$} & $\cdot$ & $\cdot$ & $\cdot$ & $\cdot$ &
$\cdot$ & $\cdot$ & $\cdot$\\
$\cdot$ & $\cdot$ & $\cdot$ & $\cdot$ & $\cdot$ & $\cdot$ & $\cdot$ & $\cdot$
& \multicolumn{1}{||c}{$\star$} & $\cdot$ & $\cdot$ & $\cdot$ &
\multicolumn{1}{|c}{$\cdot$} & $\cdot$ & $\cdot$ & $\cdot$ &
\multicolumn{1}{||c}{$\cdot$} & $\cdot$ & $\cdot$ & $\cdot$ & $\cdot$ &
$\cdot$ & $\cdot$ & $\cdot$\\\cline{9-24}%
$\cdot$ & $\cdot$ & $\cdot$ & $\cdot$ & $\cdot$ & $\cdot$ & $\cdot$ & $\cdot$
& $\cdot$ & $\cdot$ & $\cdot$ & $\cdot$ & $\cdot$ & $\cdot$ & $\cdot$ &
$\cdot$ & \multicolumn{1}{||c}{$\cdot$} & $\cdot$ & $\cdot$ & $\cdot$ &
\multicolumn{1}{|c}{$\cdot$} & $\star$ & $\cdot$ & \multicolumn{1}{c||}{$\cdot
$}\\
$\cdot$ & $\cdot$ & $\cdot$ & $\cdot$ & $\cdot$ & $\cdot$ & $\cdot$ & $\cdot$
& $\cdot$ & $\cdot$ & $\cdot$ & $\cdot$ & $\cdot$ & $\cdot$ & $\cdot$ &
$\cdot$ & \multicolumn{1}{||c}{$\cdot$} & $\cdot$ & $\cdot$ & $\cdot$ &
\multicolumn{1}{|c}{$\cdot$} & $\cdot$ & $\star$ & \multicolumn{1}{c||}{$\cdot
$}\\
$\cdot$ & $\cdot$ & $\cdot$ & $\cdot$ & $\cdot$ & $\cdot$ & $\cdot$ & $\cdot$
& $\cdot$ & $\cdot$ & $\cdot$ & $\cdot$ & $\cdot$ & $\cdot$ & $\cdot$ &
$\cdot$ & \multicolumn{1}{||c}{$\cdot$} & $\cdot$ & $\cdot$ & $\cdot$ &
\multicolumn{1}{|c}{$\cdot$} & $\cdot$ & $\cdot$ & \multicolumn{1}{c||}{$\star
$}\\
$\cdot$ & $\cdot$ & $\cdot$ & $\cdot$ & $\cdot$ & $\cdot$ & $\cdot$ & $\cdot$
& $\cdot$ & $\cdot$ & $\cdot$ & $\cdot$ & $\cdot$ & $\cdot$ & $\cdot$ &
$\cdot$ & \multicolumn{1}{||c}{$\cdot$} & $\cdot$ & $\cdot$ & $\cdot$ &
\multicolumn{1}{|c}{$\star$} & $\cdot$ & $\cdot$ & \multicolumn{1}{c||}{$\cdot
$}\\
$\cdot$ & $\cdot$ & $\cdot$ & $\cdot$ & $\cdot$ & $\cdot$ & $\cdot$ & $\cdot$
& $\cdot$ & $\cdot$ & $\cdot$ & $\cdot$ & $\cdot$ & $\cdot$ & $\cdot$ &
$\cdot$ & \multicolumn{1}{||c}{$\cdot$} & $\star$ & $\cdot$ & $\cdot$ &
\multicolumn{1}{|c}{$\cdot$} & $\cdot$ & $\cdot$ & \multicolumn{1}{c||}{$\cdot
$}\\
$\cdot$ & $\cdot$ & $\cdot$ & $\cdot$ & $\cdot$ & $\cdot$ & $\cdot$ & $\cdot$
& $\cdot$ & $\cdot$ & $\cdot$ & $\cdot$ & $\cdot$ & $\cdot$ & $\cdot$ &
$\cdot$ & \multicolumn{1}{||c}{$\cdot$} & $\cdot$ & $\star$ & $\cdot$ &
\multicolumn{1}{|c}{$\cdot$} & $\cdot$ & $\cdot$ & \multicolumn{1}{c||}{$\cdot
$}\\
$\cdot$ & $\cdot$ & $\cdot$ & $\cdot$ & $\cdot$ & $\cdot$ & $\cdot$ & $\cdot$
& $\cdot$ & $\cdot$ & $\cdot$ & $\cdot$ & $\cdot$ & $\cdot$ & $\cdot$ &
$\cdot$ & \multicolumn{1}{||c}{$\cdot$} & $\cdot$ & $\cdot$ & $\star$ &
\multicolumn{1}{|c}{$\cdot$} & $\cdot$ & $\cdot$ & \multicolumn{1}{c||}{$\cdot
$}\\
$\cdot$ & $\cdot$ & $\cdot$ & $\cdot$ & $\cdot$ & $\cdot$ & $\cdot$ & $\cdot$
& $\cdot$ & $\cdot$ & $\cdot$ & $\cdot$ & $\cdot$ & $\cdot$ & $\cdot$ &
$\cdot$ & \multicolumn{1}{||c}{$\star$} & $\cdot$ & $\cdot$ & $\cdot$ &
\multicolumn{1}{|c}{$\cdot$} & $\cdot$ & $\cdot$ & \multicolumn{1}{c||}{$\cdot
$}\\\cline{1-8}\cline{17-24}%
\multicolumn{1}{||c}{$\cdot$} & $\cdot$ & $\cdot$ & $\cdot$ &
\multicolumn{1}{|c}{$\cdot$} & $\star$ & $\cdot$ & $\cdot$ &
\multicolumn{1}{||c}{$\cdot$} & $\cdot$ & $\cdot$ & $\cdot$ & $\cdot$ &
$\cdot$ & $\cdot$ & $\cdot$ & $\cdot$ & $\cdot$ & $\cdot$ & $\cdot$ & $\cdot$
& $\cdot$ & $\cdot$ & $\cdot$\\
\multicolumn{1}{||c}{$\cdot$} & $\cdot$ & $\cdot$ & $\cdot$ &
\multicolumn{1}{|c}{$\cdot$} & $\cdot$ & $\star$ & $\cdot$ &
\multicolumn{1}{||c}{$\cdot$} & $\cdot$ & $\cdot$ & $\cdot$ & $\cdot$ &
$\cdot$ & $\cdot$ & $\cdot$ & $\cdot$ & $\cdot$ & $\cdot$ & $\cdot$ & $\cdot$
& $\cdot$ & $\cdot$ & $\cdot$\\
\multicolumn{1}{||c}{$\cdot$} & $\cdot$ & $\cdot$ & $\cdot$ &
\multicolumn{1}{|c}{$\cdot$} & $\cdot$ & $\cdot$ & $\star$ &
\multicolumn{1}{||c}{$\cdot$} & $\cdot$ & $\cdot$ & $\cdot$ & $\cdot$ &
$\cdot$ & $\cdot$ & $\cdot$ & $\cdot$ & $\cdot$ & $\cdot$ & $\cdot$ & $\cdot$
& $\cdot$ & $\cdot$ & $\cdot$\\
\multicolumn{1}{||c}{$\cdot$} & $\cdot$ & $\cdot$ & $\cdot$ &
\multicolumn{1}{|c}{$\star$} & $\cdot$ & $\cdot$ & $\cdot$ &
\multicolumn{1}{||c}{$\cdot$} & $\cdot$ & $\cdot$ & $\cdot$ & $\cdot$ &
$\cdot$ & $\cdot$ & $\cdot$ & $\cdot$ & $\cdot$ & $\cdot$ & $\cdot$ & $\cdot$
& $\cdot$ & $\cdot$ & $\cdot$\\\cline{1-8}%
\multicolumn{1}{||c}{$\cdot$} & $\star$ & $\cdot$ & $\cdot$ &
\multicolumn{1}{|c}{$\cdot$} & $\cdot$ & $\cdot$ & $\cdot$ &
\multicolumn{1}{||c}{$\cdot$} & $\cdot$ & $\cdot$ & $\cdot$ & $\cdot$ &
$\cdot$ & $\cdot$ & $\cdot$ & $\cdot$ & $\cdot$ & $\cdot$ & $\cdot$ & $\cdot$
& $\cdot$ & $\cdot$ & $\cdot$\\
\multicolumn{1}{||c}{$\cdot$} & $\cdot$ & $\star$ & $\cdot$ &
\multicolumn{1}{|c}{$\cdot$} & $\cdot$ & $\cdot$ & $\cdot$ &
\multicolumn{1}{||c}{$\cdot$} & $\cdot$ & $\cdot$ & $\cdot$ & $\cdot$ &
$\cdot$ & $\cdot$ & $\cdot$ & $\cdot$ & $\cdot$ & $\cdot$ & $\cdot$ & $\cdot$
& $\cdot$ & $\cdot$ & $\cdot$\\
\multicolumn{1}{||c}{$\cdot$} & $\cdot$ & $\cdot$ & $\star$ &
\multicolumn{1}{|c}{$\cdot$} & $\cdot$ & $\cdot$ & $\cdot$ &
\multicolumn{1}{||c}{$\cdot$} & $\cdot$ & $\cdot$ & $\cdot$ & $\cdot$ &
$\cdot$ & $\cdot$ & $\cdot$ & $\cdot$ & $\cdot$ & $\cdot$ & $\cdot$ & $\cdot$
& $\cdot$ & $\cdot$ & $\cdot$\\
\multicolumn{1}{||c}{$\star$} & $\cdot$ & $\cdot$ & $\cdot$ &
\multicolumn{1}{|c}{$\cdot$} & $\cdot$ & $\cdot$ & $\cdot$ &
\multicolumn{1}{||c}{$\cdot$} & $\cdot$ & $\cdot$ & $\cdot$ & $\cdot$ &
$\cdot$ & $\cdot$ & $\cdot$ & $\cdot$ & $\cdot$ & $\cdot$ & $\cdot$ & $\cdot$
& $\cdot$ & $\cdot$ & $\cdot$\\\cline{1-8}%
\end{tabular}
\ \right)  \label{q24}%
\end{equation}
}

We have a $24\times24$ monomial matrix, $24=\left(  5-1\right)  \left(
3-1\right)  \left(  4-1\right)  $ by (\ref{ad}), where dots denote zeroes and
stars denote nonzero entries. The dimension of this polyadic quaternion
algebra represented by (\ref{q24}) is $2^{2}\cdot24=96$, because of the two
simple field extensions (\ref{zz2})--(\ref{z22}). This is a $13$-ary
quaternion non-division algebra. If there were to be no further stages of the
Cayley--Dickson process, and the $24\times24$ monomial matrix would not be
composed, having the form (\ref{zn}), then it would give a $25$-ary algebra.
\end{Example}

Thus, we present the polyadic Cayley-Dickson process in the component form
(\ref{zl}), but instead of the elements $\mathit{z}_{\left(  \ell\right)  }$ of
the $\ell$-th stage, we place the $\mathit{Z}$-matrices (\ref{zn}) of suitable
sizes. The resulting matrix is still monomial, and so its determinant is
proportional to the product of elements. The subalgebra of invertible matrices
corresponds to a division $n$-ary algebra.

\begin{Theorem}
The invertible elements (which are described by $\mathit{Z}$-matrices with a
nonzero determinant) of the polyadic Cayley--Dickson construction
$\mathbb{D}_{CD}^{\left[  n\right]  }$ (for the first stages $\ell=0,1,2,3$)
(\ref{ad}) form the subalgebra $\mathbb{D}_{CD,\operatorname{div}}^{\left[
n\right]  }\subset\mathbb{D}_{CD}^{\left[  n\right]  }$ which is a polyadic
division $n$-ary algebra $\mathbb{D}_{CD,\operatorname{div}}^{\left[
n\right]  }$ corresponding to the binary division algebras $\mathbb{D}%
=\mathbb{R},\mathbb{C},\mathbb{H},\mathbb{O}$.
\end{Theorem}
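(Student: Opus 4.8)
The plan is to reduce the statement to the monomial structure of the matrices produced by the tower (\ref{ad}) and then reuse, block by block, the single-stage division arguments of Section \ref{sec-polydiv}. First I would invoke Lemma \ref{lem-monom}: since every stage of the polyadic Cayley--Dickson process embeds a cyclic block-shift matrix (\ref{zn}) into another one, the matrix representing any element of $\mathbb{D}_{CD}^{[n]}$ (for $\ell=0,1,2,3$) is again block-monomial, with exactly one nonzero block in each row and column, whose innermost entries lie in the corresponding binary division algebra $\mathbb{D}=\mathbb{R},\mathbb{C},\mathbb{H},\mathbb{O}$. The decisive consequence is that such a monomial matrix is invertible precisely when each of its nonzero entries is invertible in $\mathbb{D}$; this is the condition $\det\mathit{Z}\neq0$ of the statement, the determinant being the signed product of the nonzero entries as in (\ref{r5d}).

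Next I would verify closure. The $n$-ary operation is the ordinary product of $n$ such monomial matrices, which is again monomial (this is the very closure underlying the polyadization of Section \ref{sec-matr}), and each entry of the product is a product of entries of the factors. Since $\mathbb{D}$ has no zero divisors, a product of nonzero entries is nonzero, so the $n$-ary product of matrices all of whose entries are invertible again has all entries invertible. Hence $\mathbb{D}_{CD,\operatorname{div}}^{[n]}$ is closed under $\mathit{\mu}_{\mathit{Z}}^{[n]}$ and, together with the inherited binary addition, forms an $n$-ary subalgebra of $\mathbb{D}_{CD}^{[n]}$.

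To obtain the division property I would construct the querelement as in the proof leading to (\ref{zi}). Writing the defining relation (\ref{mnz}) in components yields the cyclic systems (\ref{zd}); for the associative entries $\mathbb{D}=\mathbb{R},\mathbb{C},\mathbb{H}$, every nonzero block being invertible lets one cancel the known factors by repeated left-multiplication by their inverses, recovering a unique querelement of shape (\ref{zv}) that again lies in $\mathbb{D}_{CD,\operatorname{div}}^{[n]}$. The same cancellation solves $\mathit{\mu}_{\mathit{Z}}^{[n]}[\vec{b},x,\vec{c}]=a$ uniquely for $x$ on every place, so in these cases $\mathbb{D}_{CD,\operatorname{div}}^{[n]}$ is a totally associative $n$-ary quasigroup, hence a polyadic group and a division $n$-ary algebra of the dimension (\ref{dnn}).

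The main obstacle is the octonionic stage $\ell=3$, where $\mathbb{D}=\mathbb{O}$ is non-associative, so the bracketings in the matrix products and in (\ref{zd}) are not a priori unambiguous and total associativity is lost. The plan here is to exploit that $\mathbb{O}$ is an alternative division algebra whose nonzero elements form a Moufang loop with the two-sided inverse property, so the left cancellation $p^{-1}(p\,x)=x$ used to solve (\ref{zd}) remains valid even without full associativity; each nonzero block contributes a single invertible factor, so a unique querelement of monomial shape (\ref{zv}) still exists and every nonzero element is invertible, which is exactly the division property asserted (the structure for $\mathbb{O}$ being a nonassociative $n$-ary division algebra rather than a polyadic group). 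The restriction to $\ell\leq3$ is essential: from $\ell=4$ the binary Cayley--Dickson construction yields the sedenions and beyond, which have zero divisors, so nonzero entries need not be invertible and both the closure step and the invertibility criterion break down.
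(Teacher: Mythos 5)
Your proposal is correct and follows the route the paper itself intends: reduce everything to the block-monomial shape guaranteed by Lemma \ref{lem-monom}, identify invertibility with the nonvanishing of every entry via the determinant-as-product observation, and then reuse the single-stage division argument of Theorem \ref{theor-div}. It is worth noting, however, that the paper supplies no proof of this theorem at all --- it is stated bare, supported only by the preceding remarks that the resulting matrix ``is still monomial, and so its determinant is proportional to the product of elements.'' Your write-up therefore fills in real gaps the paper leaves open: the closure of the invertible subset under $\mathit{\mu}_{\mathit{Z}}^{\left[ n\right] }$ (which needs the absence of zero divisors in $\mathbb{D}$), the explicit solution of the cyclic systems (\ref{zd}) for the querelement, and --- most substantively --- the octonionic stage $\ell=3$, where the paper's own querelement formula (\ref{zi}) is derived only ``provided $\mathbb{D}$ is associative.'' Your appeal to alternativity and the two-sided inverse property of the Moufang loop of nonzero octonions is the right repair, though you should flag (as you partly do) that for $\mathbb{O}$ the bracketing of the $n$-fold matrix product must be fixed once and for all before the closure and cancellation arguments even make sense, and that unique solvability of $\mathit{\mu}_{\mathit{Z}}^{\left[ n\right] }\left[ \vec{b},x,\vec{c}\right] =a$ for $x$ in an arbitrary interior position needs both the left and right inverse properties, not left cancellation alone. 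Your observation that the restriction to $\ell\leq3$ is forced by the zero divisors of the sedenions is also a genuine addition; the paper never explains why the hypothesis $\ell=0,1,2,3$ appears in the statement.
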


To conclude, the matrix polyadization procedure applied to division algebras
leads, in general, to non-division algebras, because it is presented by
monomial matrices (representing nonzero noninvertible elements) which become
noninvertible when at least one entry vanishes. However, the subalgebras of
invertible elements can be considered as new $n$-ary division algebras.

\section{\textsc{Polyadic Product of Vectors}}

Now, we will show that the matrix polyadization procedure ({Section}
\ref{sec-matr}) is connected with a new product of vectors in a vector space,
which we will consider below.

First, we recall some properties of monomial and related matrices
\cite{joyner}. An arbitrary monomial (or generalized permutation) matrix
$\mathit{M}_{mon}$ (over a field $\mathbb{F}^{\times}=\mathbb{F}%
\setminus\left\{0\right\}  $, and so having nonzero entries) can be
presented as a product of an invertible diagonal matrix $\mathit{M}_{diag}$
and a permutation matrix $\mathit{M}_{per}$%
\begin{equation}
\mathit{M}_{mon}=\mathit{M}_{diag}\mathit{M}_{per}.\label{mm}%
\end{equation}

The set of all $m\times m$ monomial matrices form a binary subgroup
$\mathcal{G}_{mon}$ of the general linear group $GL_{m}\left(  \mathbb{F}%
\right)  $, while the set of monomial matrices of the special fixed shape of
the cyclic shift weighted matrices (\ref{zn}), form a nonderived $\left(
m+1\right)  $-ary group (see {Section} \ref{sec-matr}). Abstractly,
(\ref{mm}) can be considered as the matrix representation of the wreath
product of $\mathbb{F}^{\times}$ and the symmetric group $S_{m}$, because the
group of diagonal matrices is isomorphic to $\left(  \mathbb{F}^{\times
}\right)  ^{m}$ \cite{joyner}. In general, a monomial representation of a
binary group $\mathcal{G}$ is a homomorphism to some binary subgroup of
$\mathcal{G}_{mon}$.

On the other hand, there exists the procedure of vectorization (see, e.g.,
\cite{hackbusch,mag/neu}) which establishes a correspondence between (for
instance square) $m\times m$ matrices and $m^{2}$-tuples (which can be
interpreted as coordinate expressions of $m^{2}\times1$ column vectors in a
vector space) derived by stacking the columns of the matrix, such that%
\begin{equation}
\operatorname*{vec}\nolimits_{\mathbb{F}}:\mathbb{F}^{m\times m}%
\rightarrow\mathbb{F}^{m^{2}}. \label{vf}%
\end{equation}

In general, vectorization is a homomorphism. In particular, for diagonal
matrices we have%
\begin{equation}
\operatorname*{vec}\nolimits_{\mathbb{F}}\left(  \mathit{M}_{diag}^{\prime
}\mathit{M}_{diag}^{\prime\prime}\right)  =\operatorname*{vec}%
\nolimits_{\mathbb{F}}\left(  \mathit{M}_{diag}^{\prime}\right)
\odot\operatorname*{vec}\nolimits_{\mathbb{F}}\left(  \mathit{M}%
_{diag}^{\prime\prime}\right)  , \label{vmm}%
\end{equation}
where $\odot$ is the element-wise product of $m^{2}$-tuples. For the monomial
matrices (\ref{mm}), we define the following modification of vectorization.

\begin{Definition}
Reduced vectorization is the mapping of a monomial matrix into $m$-tuple%
\begin{equation}
\operatorname*{vec}\nolimits_{\mathbb{F}}^{\times}:\mathbb{F}^{m\times
m}\rightarrow\mathbb{F}^{m}, \label{vf1}%
\end{equation}
which is derived from vectorization (\ref{vf}) by omitting zeroes on both sides.
\end{Definition}

\begin{Remark}
\label{rem-mon}The relation (\ref{vmm}) is not valid for general monomial
matrices (\ref{mm}). Nevertheless, we will show that, for a special class of
monomial matrices, the cycled shift weighted matrices (\ref{zn}), the
homomorphism (\ref{vmm}) can be obtained for non-binary products on both
sides, and that will allow us to define a new kind of product of vectors (in
any vector space).
\end{Remark}

Let $\mathfrak{V}_{\mathbb{F}}$ be a finite-dimensional vector space (over a
field $\mathbb{F}$) of dimension $m\geq2$, such that%
\begin{equation}
\mathfrak{V}_{\mathbb{F}}\ni\mathbf{v}=\mathbf{v}\left(  x\right)  =\sum
_{i=1}^{m}x_{i}\mathbf{e}_{i},\ \ \ \ \ x_{i}\in\mathbb{F}, \label{fv}%
\end{equation}
where $\mathbf{e}_{i}$ are canonical basis vectors.

In $\mathfrak{V}_{\mathbb{F}}$ (satisfying the standard axioms), the only
closed binary operation between vectors themselves is addition. Introducing
another closed binary operation between vectors, which satisfies two-sided
distributivity with the addition and compatibility with scalars (from
$\mathbb{F}$), gives the bilinear product. A vector space $\mathfrak{V}%
_{\mathbb{F}}$ (we do not consider its concrete realization) endowed with a
bilinear product becomes an algebra over a field $\mathbb{F}$ (for further
details and review, see, e.g., \cite{brown,spindler}).

The reduced vectorization (\ref{vf1}) is not a binary homomorphism (see
{Remark} { \ref{rem-mon}}), but can be a polyadic homomorphism,
if we consider a special kind of monomial $m\times m$ matrices, the cyclic
shift weighted matrices $\mathit{M}_{shf}$ of the fixed shape (\ref{zn}).

To show this, we introduce a new product of vectors in a vector space
$\mathfrak{V}_{\mathbb{F}}$.

\begin{Definition}
In $m$-dimensional vector space $\mathfrak{V}_{\mathbb{F}}$, we define $\left(
m+1\right)  $-ary (polyadic) product of vectors%
\begin{align}
&  \mathit{\mu}_{\star}^{\left[  m+1\right]  }\left[  \mathbf{v}\left(
x^{\prime}\right)  ,\mathbf{v}\left(  x^{\prime\prime}\right)  ,\ldots
,\mathbf{v}\left(  x^{\prime\prime\prime}\right)  \mathbf{v}\left(
x^{\prime\prime\prime\prime}\right)  \right]  =\left(  \overset{m+1}%
{\overbrace{x_{1}^{\prime}x_{2}^{\prime\prime}\ldots x_{m}^{\prime\prime
\prime}x_{1}^{\prime\prime\prime\prime}}}\right)  \mathbf{e}_{1}\nonumber\\
&  +\left(  \overset{m+1}{\overbrace{x_{2}^{\prime}x_{3}^{\prime\prime}\ldots
x_{1}^{\prime\prime\prime}x_{2}^{\prime\prime\prime\prime}}}\right)
\mathbf{e}_{2}+\ldots+\left(  \overset{m+1}{\overbrace{x_{m}^{\prime}%
x_{1}^{\prime\prime}\ldots x_{m-1}^{\prime\prime\prime}x_{m}^{\prime
\prime\prime\prime}}}\right)  \mathbf{e}_{m},\ \ \ x_{i}^{\prime}%
,x_{i}^{\prime\prime},\ldots,x_{i}^{\prime\prime\prime}x_{i}^{\prime
\prime\prime\prime}\in\mathbb{F}. \label{mvv}%
\end{align}

\end{Definition}

\begin{Remark}
The polyadic product of vectors (\ref{mvv}) in components is not elementwise,
but cyclic braided-like. Such products appeared in higher regular semigroups
and braid groups \cite{duplij2022}, as well as in semisupermanifold theory
\cite{dup2018}.
\end{Remark}

Consider the reduced vectorization $\operatorname*{vec}\nolimits_{\mathbb{F}%
}^{\times}$ (\ref{vf1}) of the cyclic shift weighted matrices given by
$\mathit{M}_{shf}\left(  x\right)  \mapsto\mathbf{v}\left(  x\right)  $ or
(cf. (\ref{zn}))%
\begin{align}
\mathit{M}_{shf}\left(  x\right)   &  =\left(
\begin{array}
[c]{ccccc}%
0 & x_{1} & \ldots & 0 & 0\\
0 & 0 & x_{2} & \ldots & 0\\
0 & 0 & \ddots & \ddots & \vdots\\
\vdots & \vdots & \ddots & 0 & x_{m-1}\\
x_{m} & 0 & \ldots & 0 & 0
\end{array}
\right)  \nonumber\\[10pt]
&  \mapsto\mathbf{v}\left(  x\right)  =x_{1}\mathbf{e}_{1}+x_{2}\mathbf{e}%
_{2}+\ldots+x_{m-1}\mathbf{e}_{1}+x_{m}\mathbf{e}_{m},\ x_{i}\in
\mathbb{F}.\label{ms}%
\end{align}

\begin{Proposition}
The reduced vectorization $\operatorname*{vec}\nolimits_{\mathbb{F}}^{\times}$
of the cyclic shift weighted matrices (\ref{ms}) is a polyadic or $\left(
m+1\right)  $-ary homomorphism.
\end{Proposition}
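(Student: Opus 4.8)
The plan is to verify directly that $\operatorname*{vec}\nolimits_{\mathbb{F}}^{\times}$ carries the $(m+1)$-ary matrix multiplication of cyclic shift weighted matrices onto the polyadic product $\mathit{\mu}_{\star}^{\left[  m+1\right]  }$ of the associated vectors, i.e.\ that
\begin{equation*}
\operatorname*{vec}\nolimits_{\mathbb{F}}^{\times}\!\left(  \mathit{M}_{shf}(x')\cdot \mathit{M}_{shf}(x'')\cdots \mathit{M}_{shf}(x'''')\right)  = \mathit{\mu}_{\star}^{\left[  m+1\right]  }\!\left[  \mathbf{v}(x'),\mathbf{v}(x''),\ldots,\mathbf{v}(x'''')\right]  ,
\end{equation*}
where the left-hand product runs over exactly $m+1$ factors. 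First I would observe that, since $\mathbb{F}$ is associative (it is a field), the cyclic shift matrices $\mathit{M}_{shf}$ are exactly the matrices of the shape (\ref{zn}) with $n-1=m$ and entries in $\mathbb{A}=\mathbb{F}$, so all the structural results of {Section} \ref{sec-matr} apply verbatim.

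The central step is the product of $m+1$ such matrices. Because each $\mathit{M}_{shf}$ realizes the weighted $m$-cycle $\sigma\colon k\mapsto k+1\pmod m$, the permutation part of the product is $\sigma^{m+1}=\sigma$, so the product is again a cyclic shift weighted matrix of the same shape (\ref{zn}); this is precisely the closure of (\ref{zn}) under the ordinary product of exactly $n=m+1$ matrices already recorded in {Section} \ref{sec-matr}. Its nonzero entries are then read off from the component cyclic-product formula (\ref{cy}): the $k$-th entry equals the braided product $x_k'x_{k+1}''\cdots x_{k-1}'''x_k''''$ with indices taken cyclically modulo $m$. I would simply specialize (\ref{cy}) to $\mathbb{A}=\mathbb{F}$ to obtain these $m$ entries, rather than recomputing the matrix product by hand.

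Finally I would apply $\operatorname*{vec}\nolimits_{\mathbb{F}}^{\times}$ to the resulting matrix using (\ref{ms}), which returns $\sum_{k=1}^{m} x_k\mathbf{e}_k$ with the $x_k$ just described, and compare term by term with the definition (\ref{mvv}) of $\mathit{\mu}_{\star}^{\left[  m+1\right]  }$; the two coincide identically, establishing the homomorphism property. The only point that genuinely needs care --- and the step I expect to be the main obstacle --- is the well-definedness of the intertwiner: since reduced vectorization is defined by discarding the off-support zeros, one must check that the support of the product coincides with the common support of the factors, so that ``omitting zeroes'' is unambiguous both for every argument and for the value. This is exactly guaranteed by $\sigma^{m+1}=\sigma$ above, and it is also what forces the homomorphism to have arity $m+1$ rather than any other value, since only for $m+1$ factors does the support return to the single-shift pattern (\ref{zn}).
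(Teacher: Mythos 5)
Your proposal is correct and follows essentially the same route as the paper: compute the product of exactly $m+1$ cyclic shift weighted matrices, observe via (\ref{cy}) that it is again of the shape (\ref{zn}) with the braided cyclic products as its nonzero entries, apply $\operatorname*{vec}\nolimits_{\mathbb{F}}^{\times}$, and compare term by term with the definition (\ref{mvv}). Your added justification of closure via the permutation part $\sigma^{m+1}=\sigma$, and the accompanying remark on why the supports match so that omitting zeroes is unambiguous, is a slightly more explicit account of a step the paper takes for granted, but it is not a different argument.
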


\begin{proof}
The product of $\left(  m+1\right)  $ matrices $\mathit{M}_{shf}\left(
x\right)  $ (but not fewer) is a cyclic shift weighted matrix of the form such
that the set $\left\{  \mathit{M}_{shf}\left(  x\right)  \right\}  $ is a
nonderived $\left(  m+1\right)  $-ary semigroup (cf. (\ref{cy}))%
\begin{align}
&  \overset{m+1}{\overbrace{\mathit{M}_{shf}\left(  x^{\prime}\right)
\mathit{M}_{shf}\left(  x^{\prime\prime}\right)  \ldots\mathit{M}_{shf}\left(
x^{\prime\prime\prime}\right)  \mathit{M}_{shf}\left(  x^{\prime\prime
\prime\prime}\right)  }}\nonumber\\
&  =\left(
\begin{array}
[c]{ccccc}%
0 & \overset{m+1}{\overbrace{x_{1}^{\prime}x_{2}^{\prime\prime}\ldots
x_{m}^{\prime\prime\prime}x_{1}^{\prime\prime\prime\prime}}} & \ldots & 0 &
0\\
0 & 0 & \overset{m+1}{\overbrace{x_{2}^{\prime}x_{3}^{\prime\prime}\ldots
x_{1}^{\prime\prime\prime}x_{2}^{\prime\prime\prime\prime}}} & \ldots & 0\\
0 & 0 & \ddots & \ddots & \vdots\\
\vdots & \vdots & \ddots & 0 & \overset{m+1}{\overbrace{x_{m-1}^{\prime}%
x_{m}^{\prime\prime}\ldots x_{m-2}^{\prime\prime\prime}x_{m-1}^{\prime
\prime\prime\prime}}}\\
\overset{m+1}{\overbrace{x_{m}^{\prime}x_{1}^{\prime\prime}\ldots
x_{m-1}^{\prime\prime\prime}x_{m}^{\prime\prime\prime\prime}}} & 0 & \ldots &
0 & 0
\end{array}
\right)  \label{mmm}\\
&  \mapsto\mathit{\mu}_{\star}^{\left[  m+1\right]  }\left[  \mathbf{v}\left(
x^{\prime}\right)  ,\mathbf{v}\left(  x^{\prime\prime}\right)  ,\ldots
,\mathbf{v}\left(  x^{\prime\prime\prime}\right)  \mathbf{v}\left(
x^{\prime\prime\prime\prime}\right)  \right]  .\nonumber
\end{align}

\hl{The} statement now follows from comparing (\ref{mmm}) with (\ref{mvv}) and
(\ref{ms}).
\end{proof}

\begin{Definition}
A vector space $\mathfrak{V}_{\mathbb{F}}$ equipped with the $\left(
m+1\right)  $-ary (polyadic) product of vectors (\ref{mvv}) becomes $\left(
m+1\right)  $-ary algebra $\mathfrak{A}^{\left[  m+1\right]  }\left(
\mathbb{F}\right)  $ over a field $\mathbb{F}$.
\end{Definition}

\begin{Proposition}
The $\left(  m+1\right)  $-ary algebra $\mathfrak{A}^{\left[  m+1\right]
}\left(  \mathbb{F}\right)  $ is totally (polyadic) associative.
\end{Proposition}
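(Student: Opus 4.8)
The plan is to deduce total associativity on the vector side from the associativity of ordinary matrix multiplication, transported through the reduced vectorization. First I would recall that, by the preceding Proposition, the map $\operatorname*{vec}\nolimits_{\mathbb{F}}^{\times}$ sending a cyclic shift weighted matrix $\mathit{M}_{shf}\left(  x\right)$ to the vector $\mathbf{v}\left(  x\right)$ is a bijective $\left(  m+1\right)$-ary homomorphism: it is injective and surjective because $\mathit{M}_{shf}\left(  x\right)$ is completely determined by its $m$ nonzero entries, which are exactly the components of $\mathbf{v}\left(  x\right)$ in (\ref{ms}), and it intertwines $\mathit{\mu}_{\star}^{\left[  m+1\right]}$ with the $\left(  m+1\right)$-fold matrix product displayed in (\ref{mmm}). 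Hence $\operatorname*{vec}\nolimits_{\mathbb{F}}^{\times}$ is an isomorphism of $\left(  m+1\right)$-ary magmas, and total associativity---being an isomorphism invariant---need only be checked on the matrix side.

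Second, I would verify total associativity for the matrix operation. On the set $\left\{  \mathit{M}_{shf}\left(  x\right)  \right\}$ the $\left(  m+1\right)$-ary operation is the restriction of the ordinary (binary, associative) matrix product over exactly $m+1$ factors; although nonderived as an operation on this set, its value is computed inside the full matrix algebra, where associativity holds. Composing two such operations therefore produces the ordinary product of $2\left(  m+1\right)  -1=2m+1$ matrices,
\[
\mathit{M}_{shf}^{(1)}\cdots\mathit{M}_{shf}^{(j)}\bigl(\mathit{M}_{shf}^{(j+1)}\cdots\mathit{M}_{shf}^{(j+m+1)}\bigr)\mathit{M}_{shf}^{(j+m+2)}\cdots\mathit{M}_{shf}^{(2m+1)},
\]
and by associativity of matrix multiplication this is independent of the slot $j$ at which the inner bracket is inserted. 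This is exactly the invariance required by the definition of total associativity (\ref{ma}).

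Third, applying the isomorphism $\operatorname*{vec}\nolimits_{\mathbb{F}}^{\times}$ to both sides of the matrix identity yields the corresponding identity for $\mathit{\mu}_{\star}^{\left[  m+1\right]}$, which establishes that $\mathfrak{A}^{\left[  m+1\right]}\left(  \mathbb{F}\right)$ is totally associative. The only point requiring care---and the nearest thing to an obstacle---is to confirm that the cyclic braided structure of the vector product (\ref{mvv}) does not obstruct the invariance; but this braiding is already fully encoded in the noncommutative matrix product of (\ref{mmm}), and the homomorphism transports it faithfully slot by slot, so no extra relations among the components are needed.
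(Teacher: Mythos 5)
Your proposal is correct and follows the same route as the paper, which likewise derives total associativity from the associativity of the ordinary matrix product of the cyclic shift weighted matrices together with the reduced vectorization (\ref{ms}); you have simply spelled out the transport-of-structure step and the count of $2m+1$ factors that the paper leaves implicit. No gap.
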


\begin{proof}
It follows from the associativity of binary matrix multiplication
$\mathit{M}_{shf}\left(  x\right)  $ (\ref{mmm}) and the definition of reduced
vectorization (\ref{ms}).
\end{proof}

We define the structure constants of $\mathfrak{A}^{\left[  m+1\right]
}\left(  \mathbb{F}\right)  $ through the basis vectors $\mathbf{e}_{i}$ by%
\begin{equation}
\mathit{\mu}_{\star}^{\left[  m+1\right]  }\left[  \mathbf{e}_{i_{1}%
},\mathbf{e}_{i_{2}},\ldots,\mathbf{e}_{i_{m}},\mathbf{e}_{i_{m+1}}\right]
=\sum_{j=1}^{m}f_{i_{1},i_{2},\ldots,i_{m},i_{m+1}}^{\ j}\mathbf{e}%
_{j},\ \ \ \ \ \ \ \ f_{i_{1},i_{2},\ldots,i_{m},i_{m+1}}^{\ j}\in\mathbb{F}.
\label{me}%
\end{equation}

\begin{Theorem}
The structure constants of the $\left(  m+1\right)  $-ary algebra
$\mathfrak{A}^{\left[  m+1\right]  }\left(  \mathbb{F}\right)  $ are%
\begin{equation}
f_{1,2,3,\ldots,m-1,m,1}^{\ 1}=1,\ \ f_{2,3,4,\ldots m,1,2}^{\ 2}%
=1,\ldots,f_{m-1,m,1,2\ldots,m-3,m-2,m-1}^{\ m-1}=1,\ \ f_{m,1,2\ldots
m-2,m-1,m}^{\ m}=1,
\end{equation}

while $f_{i_{1},i_{2},\ldots,i_{m},i_{m+1}}^{\ j}$ having other combinations
of indices are zero.
\end{Theorem}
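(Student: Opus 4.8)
The plan is to read off the structure constants directly from the definition (\ref{me}) by evaluating the polyadic product (\ref{mvv}) on canonical basis vectors. First I would write each input $\mathbf{e}_{i_s}$ as $\mathbf{v}(x^{(s)})$ with coordinates $x^{(s)}_i=\delta_{i,i_s}$ (Kronecker delta), so that computing $\mathit{\mu}_{\star}^{[m+1]}[\mathbf{e}_{i_1},\ldots,\mathbf{e}_{i_{m+1}}]$ reduces to substituting these deltas into the cyclic monomials in (\ref{mvv}) and collecting the coefficient of each $\mathbf{e}_j$.

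The key step is to record the closed form of the coordinates in (\ref{mvv}): the coefficient of $\mathbf{e}_k$ is the single monomial $x^{(1)}_{j_1(k)}x^{(2)}_{j_2(k)}\cdots x^{(m+1)}_{j_{m+1}(k)}$ whose slot-$s$ index advances cyclically as $j_s(k)=((k+s-2)\bmod m)+1$. I would check this closed form against the three coordinates displayed in (\ref{mvv}) (those of $\mathbf{e}_1$, $\mathbf{e}_2$, and $\mathbf{e}_m$), noting especially that after $m$ incrementing factors the $(m+1)$-th slot wraps back to $j_{m+1}(k)=k$, so that the first and last indices coincide. Substituting the deltas then gives the $\mathbf{e}_k$-coefficient as $\prod_{s=1}^{m+1}\delta_{i_s,j_s(k)}$, whence $f_{i_1,\ldots,i_{m+1}}^{\ k}=\prod_{s=1}^{m+1}\delta_{i_s,j_s(k)}$.

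This product of deltas equals $1$ precisely when $i_s=j_s(k)$ for every $s$, and $0$ otherwise, so for each output index $k$ there is exactly one admissible input tuple, namely $(i_1,\ldots,i_{m+1})=(k,k{+}1,\ldots,m,1,\ldots,k)$ taken cyclically, with $i_1=i_{m+1}=k$. Letting $k$ run over $1,\ldots,m$ reproduces exactly the tuples listed in the statement (written out there for $k=1,2,\ldots,m-1,m$), while every other combination of indices fails at least one delta and hence gives a vanishing constant. The only real work is the index bookkeeping---verifying that $j_s(k)=((k+s-2)\bmod m)+1$ matches the cyclic pattern of (\ref{mvv}) and closes up correctly at $s=m+1$; there is no conceptual obstacle, since the claim is essentially the definition (\ref{mvv}) rewritten in the basis.
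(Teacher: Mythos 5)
Your proposal is correct and takes essentially the same route as the paper: both read the structure constants off the explicit coordinate formula (\ref{mvv}) by evaluating it on the canonical basis (the paper phrases this as inserting the expansions (\ref{fv}) and invoking multilinearity, you as substituting Kronecker-delta coordinates, which is the same computation). Your closed form $j_{s}(k)=((k+s-2)\bmod m)+1$ correctly captures the cyclic index pattern, including the wrap-around $j_{m+1}(k)=k$ that forces $i_{1}=i_{m+1}$ in the surviving tuples.
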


\begin{proof}
It follows from the explicit form of the $\left(  m+1\right)  $-ary product
(\ref{mvv}) and from the multilinearity of $\mathit{\mu}_{\star}^{\left[
m+1\right]  }$, that is, the polyadic distributivity with addition (on each
place)
\begin{align}
&  \mathit{\mu}_{\star}^{\left[  m+1\right]  }\left[  \mathbf{v}_{1}\left(
x^{\prime}\right)  +\mathbf{v}_{2}\left(  x^{\prime}\right)  ,\mathbf{v}%
\left(  x^{\prime\prime}\right)  ,\ldots,\mathbf{v}\left(  x^{\prime
\prime\prime}\right)  ,\mathbf{v}\left(  x^{\prime\prime\prime\prime}\right)
\right] \nonumber\\
&  =\mathit{\mu}_{\star}^{\left[  m+1\right]  }\left[  \mathbf{v}_{1}\left(
x^{\prime}\right)  ,\mathbf{v}\left(  x^{\prime\prime}\right)  ,\ldots
,\mathbf{v}\left(  x^{\prime\prime\prime}\right)  ,\mathbf{v}\left(
x^{\prime\prime\prime\prime}\right)  \right]  +\mathit{\mu}_{\star}^{\left[
m+1\right]  }\left[  \mathbf{v}_{2}\left(  x^{\prime}\right)  ,\mathbf{v}%
\left(  x^{\prime\prime}\right)  ,\ldots,\mathbf{v}\left(  x^{\prime
\prime\prime}\right)  ,\mathbf{v}\left(  x^{\prime\prime\prime\prime}\right)
\right]  , \label{md}%
\end{align}

and compatibility with scalars%
 \begin{align}
&  \mathit{\mu}_{\star}^{\left[  m+1\right]  }\left[  \lambda^{\prime
}\mathbf{v}\left(  x^{\prime}\right)  ,\lambda^{\prime\prime}\mathbf{v}\left(
x^{\prime\prime}\right)  ,\ldots,\lambda^{\prime\prime\prime}\mathbf{v}\left(
x^{\prime\prime\prime}\right)  ,\lambda^{\prime\prime\prime\prime}%
\mathbf{v}\left(  x^{\prime\prime\prime\prime}\right)  \right] \nonumber\\
&  =\left(  \lambda^{\prime}\lambda^{\prime\prime}\ldots\lambda^{\prime
\prime\prime}\lambda^{\prime\prime\prime\prime}\right)  \mathit{\mu}_{\star
}^{\left[  m+1\right]  }\left[  \mathbf{v}_{1}\left(  x^{\prime}\right)
,\mathbf{v}\left(  x^{\prime\prime}\right)  ,\ldots,\mathbf{v}\left(
x^{\prime\prime\prime}\right)  ,\mathbf{v}\left(  x^{\prime\prime\prime\prime
}\right)  \right]  ,\ \ \ \ \ \lambda^{\prime},\lambda^{\prime\prime}%
,\ldots\lambda^{\prime\prime\prime},\lambda^{\prime\prime\prime\prime}%
\in\mathbb{F}. \label{mc}%
\end{align}

We insert the expansions (\ref{fv}) into (\ref{mvv}) and then use the
definition of the structure constants (\ref{me}) taking into account
(\ref{md}) and (\ref{mc}) to obtain the result.
\end{proof}

\begin{Corollary}
If $\mathit{M}_{shf}\left(  x\right)  $ is monomial (having no zero entries
$x_{i}\in\mathbb{F\setminus}\left\{  0\right\}  $), the vectorization
$\operatorname*{vec}\nolimits_{\mathbb{F}}^{\times}$ (\ref{vf1}) becomes a
polyadic isomorphism.
\end{Corollary}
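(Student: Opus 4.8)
The plan is to promote the polyadic homomorphism established in the preceding Proposition to a bijection, since a bijective polyadic homomorphism is automatically a polyadic isomorphism. That Proposition already shows that $\operatorname*{vec}\nolimits_{\mathbb{F}}^{\times}$ intertwines the $\left(  m+1\right)  $-ary matrix product of cyclic shift weighted matrices (\ref{mmm}) with the $\left(  m+1\right)  $-ary vector product $\mathit{\mu}_{\star}^{\left[  m+1\right]  }$ (\ref{mvv}), so only bijectivity remains to be checked.

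First I would read off from (\ref{ms}) that the reduced vectorization is essentially the relabelling $\mathit{M}_{shf}\left(  x\right)  \mapsto\left(  x_{1},\ldots,x_{m}\right)  \mapsto\mathbf{v}\left(  x\right)  $, in which the fixed cyclic shape (\ref{zn}) pins down the positions of the entries. The monomial hypothesis $x_{i}\in\mathbb{F}\setminus\left\{  0\right\}  $ is exactly what makes this unambiguous: the only vanishing entries are the structural off-cycle zeros, so deleting them recovers precisely the $m$-tuple $\left(  x_{1},\ldots,x_{m}\right)  $ and hence the coordinates of $\mathbf{v}\left(  x\right)  $ in the canonical basis $\mathbf{e}_{i}$. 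This yields injectivity; conversely, every vector with all coordinates nonzero is the image of exactly one such monomial matrix, obtained by returning the coordinates to the cyclic positions, which yields surjectivity onto the vectors with nonzero coordinates. Hence $\operatorname*{vec}\nolimits_{\mathbb{F}}^{\times}$ restricts to a bijection on the monomial class.

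It then remains to observe that the inverse map is again a polyadic homomorphism, so that the two structures are identified as polyadic objects and not merely as sets. Since $\det\mathit{M}_{shf}\left(  x\right)  \neq0$ under the monomial hypothesis, these matrices form a nonderived $\left(  m+1\right)  $-ary group (as recalled above), and each carries a querelement, the polyadic inverse (\ref{mgg}); by the homomorphism property this querelement is transported to the querelement of the image vector, so the vectors with nonzero coordinates inherit the structure of a division $\left(  m+1\right)  $-ary algebra (in parallel with Theorem \ref{theor-div}). As the inverse of a bijective homomorphism between $\left(  m+1\right)  $-ary groups automatically preserves both the product and the queroperation, the map $\operatorname*{vec}\nolimits_{\mathbb{F}}^{\times}$ becomes a polyadic isomorphism.

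The main, though modest, obstacle is simply to make the role of the monomial assumption explicit: dropping it leaves matrices with some $x_{i}=0$, which still map to vectors but with vanishing coordinates and are non-invertible, lying outside any group structure, so the map would be only a homomorphism of $\left(  m+1\right)  $-ary semigroups. Restricting to $x_{i}\neq0$ keeps us inside the division $\left(  m+1\right)  $-ary algebra where querelements exist and are preserved, and this is precisely what upgrades the homomorphism to an isomorphism.
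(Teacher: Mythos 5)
Your argument is correct and coincides with the reasoning the paper leaves implicit: the Corollary is stated there without proof, as an immediate consequence of the preceding Proposition (the $\left(m+1\right)$-ary homomorphism property of $\operatorname*{vec}\nolimits_{\mathbb{F}}^{\times}$) combined with the observation that on the monomial class the relabelling $\mathit{M}_{shf}\left(x\right)\mapsto\mathbf{v}\left(x\right)$ is a bijection onto the vectors with all coordinates nonzero, so the inverse is again a polyadic homomorphism. Your extra remarks on the preservation of querelements anticipate, consistently, the paper's subsequent Proposition on the quervector and its final Corollary on the division algebra $\mathfrak{A}_{\operatorname{div}}^{\left[m+1\right]}\left(\mathbb{F}\right)$.
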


\begin{Proposition}
If $x_{i}\in\mathbb{F\setminus}\left\{  0\right\}  $, the set of vectors with
multiplication (\ref{mvv}) becomes a nonderived $\left(  m+1\right)  $-ary
group with the quervector $\mathbf{\tilde{v}}$ (\ref{mgg}) (the polyadic
analog the \textquotedblleft inverse\textquotedblright) having the form%
\begin{equation}
\widetilde{\mathbf{v}}\left(  x\right)  =\left(  \frac{1}{x_{2}x_{3}\ldots
x_{m}}\right)  \mathbf{e}_{1}+\left(  \frac{1}{x_{3}x_{4}\ldots x_{m}x_{1}%
}\right)  \mathbf{e}_{2}+\ldots+\left(  \frac{1}{x_{m}x_{1}\ldots x_{m-1}%
}\right)  \mathbf{e}_{m},\ x_{i}\in\mathbb{F\setminus}\left\{  0\right\}  .
\label{vx}%
\end{equation}

\end{Proposition}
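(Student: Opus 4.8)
The plan is to establish the three defining features of an $(m+1)$-ary group---total associativity, unique solvability (the quasigroup property), and existence of the querelement---and then to read off the explicit form of $\widetilde{\mathbf{v}}$ by a direct component computation. Total associativity is already secured in the preceding Proposition, and the fact that the product genuinely requires $m+1$ factors (nonderivedness) is inherited from the matrix picture of Section \ref{sec-matr}, where the set of cyclic shift weighted matrices closes only under products of exactly $m+1$ of them. So the new content is solvability together with the closed form (\ref{vx}).

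For solvability I would argue directly from the braided product (\ref{mvv}). Suppose the unknown vector $\mathbf{v}(x)$ occupies the $p$-th slot among the $m+1$ factors, the remaining slots carrying fixed vectors with nonzero entries and the target being some $\mathbf{v}(a)$. In the $j$-th component of (\ref{mvv}) the factor in slot $k$ contributes its entry of index $(j+k-1)\bmod m$, so the unknown contributes exactly the single scalar $x_{(j+p-1)\bmod m}$. As $j$ runs over $1,\ldots,m$ the index $(j+p-1)\bmod m$ runs bijectively over the residues modulo $m$; hence each unknown entry $x_i$ appears isolated in precisely one of the $m$ scalar equations, multiplied only by known nonzero factors. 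Since $\mathbb{F}$ is a field, each equation has a unique solution, and as the right-hand side and all coefficients are nonzero the solution again satisfies $x_i\neq0$. This yields unique solvability with the unknown in any position, so the set of nonzero-entry vectors is closed and forms an $(m+1)$-ary quasigroup; with associativity it is a nonderived $(m+1)$-ary group. Equivalently, one may transport the $(m+1)$-ary group of monomial cyclic shift weighted matrices along the polyadic isomorphism $\operatorname{vec}_{\mathbb{F}}^{\times}$ of the preceding Corollary.

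It remains to verify (\ref{vx}). By the general theory of $n$-ary groups \cite{galmak1} the querelement is unique and independent of its placement, so it suffices to solve (\ref{mgg}) with $\widetilde{\mathbf{v}}$ in the last slot, namely $\mathit{\mu}_{\star}^{[m+1]}[\mathbf{v}(x),\ldots,\mathbf{v}(x),\widetilde{\mathbf{v}}]=\mathbf{v}(x)$. In the $j$-th component the $m$ copies of $\mathbf{v}(x)$ contribute the entries $x_j,x_{j+1},\ldots,x_{j-1}$ taken cyclically, i.e.\ all of $x_1,\ldots,x_m$, while the last slot contributes $\widetilde{x}_j$; as these are scalars in the commutative field their order is immaterial and the equation becomes $\bigl(\prod_{i=1}^{m}x_i\bigr)\widetilde{x}_j=x_j$. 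Solving gives $\widetilde{x}_j=x_j/\prod_{i=1}^{m}x_i=1/\prod_{i\neq j}x_i$, which is exactly the coefficient of $\mathbf{e}_j$ in (\ref{vx}).

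The distributivity and scalar-compatibility needed for multilinearity are already recorded in (\ref{md})--(\ref{mc}); the only point demanding care is the cyclic index bookkeeping---confirming that for each slot $p$ the unknown's entries decouple across the $m$ scalar equations, and that the $m$ repeated factors collapse to the full product $\prod_i x_i$ by commutativity of $\mathbb{F}$. This combinatorial matching of indices, rather than any genuine analytic difficulty, is the main thing to get right.
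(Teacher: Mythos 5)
Your proposal is correct and follows essentially the same route as the paper: the paper's (one-line) proof likewise obtains $\widetilde{\mathbf{v}}$ by inserting the cyclic product (\ref{mvv}) into the defining relation (\ref{mgg}) and reading off the component equations (\ref{zd}), which in a commutative field collapse to $\bigl(\prod_{i=1}^{m}x_{i}\bigr)\widetilde{x}_{j}=x_{j}$, i.e.\ $\widetilde{x}_{j}=1/\prod_{i\neq j}x_{i}$ exactly as you derive. Your added verification of unique solvability in every slot is detail the paper leaves implicit (it relies on the polyadic isomorphism with monomial shift matrices), and your closed form agrees with the $m=3$ example in the text, the literal last term of (\ref{vx}) as printed being an apparent index typo.
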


\begin{proof}
To obtain the quervector $\mathbf{\tilde{v}}$, we use the nonderived $\left(
m+1\right)  $-ary product (\ref{mvv}) and the definition (\ref{mgg}), also
equations (\ref{zd}) in the notation (\ref{mmm}).
\end{proof}

\begin{Corollary}
The corresponding nonderived $\left(  m+1\right)  $-ary algebra $\mathfrak{A}%
^{\left[  m+1\right]  }\left(  \mathbb{F}\right)  $ with $x_{i}\in
\mathbb{F\setminus}\left\{  0\right\}  $ becomes a division algebra
$\mathfrak{A}_{\operatorname{div}}^{\left[  m+1\right]  }\left(
\mathbb{F}\right)  $ which is polyadically isomorphic to the division algebra
of monomial $m\times m$ matrices, and the cyclic shift weighted matrices
(\ref{ms}), see (\ref{zn}) and {Theorem \ref{theor-div}}.
\end{Corollary}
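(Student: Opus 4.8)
The plan is to assemble the three preceding results rather than to carry out any new computation, since each ingredient has already been prepared. First I would invoke the Proposition immediately above, which shows that once we restrict to vectors $\mathbf{v}\left(x\right)$ with all components $x_i\in\mathbb{F}\setminus\left\{0\right\}$, the set of such vectors equipped with the $\left(m+1\right)$-ary product $\mathit{\mu}_{\star}^{\left[m+1\right]}$ (\ref{mvv}) forms a nonderived $\left(m+1\right)$-ary group whose quervector is given explicitly by (\ref{vx}). Thus every element of $\mathfrak{A}_{\operatorname{div}}^{\left[m+1\right]}\left(\mathbb{F}\right)$ is polyadically invertible, which is precisely the invertibility condition demanded of a division $\left(m+1\right)$-ary algebra.

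Next I would confirm that $\mathfrak{A}_{\operatorname{div}}^{\left[m+1\right]}\left(\mathbb{F}\right)$ is an algebra in the full sense and not merely a polyadic group: the multilinearity identities (\ref{md}) and (\ref{mc}), established in the proof of the structure-constants Theorem, show that $\mathit{\mu}_{\star}^{\left[m+1\right]}$ is polyadically distributive over addition and compatible with the scalar action, so that $\mathfrak{V}_{\mathbb{F}}$ endowed with this product is a genuine $\left(m+1\right)$-ary algebra over $\mathbb{F}$; total associativity has already been shown. Combining this with the $\left(m+1\right)$-ary group structure of the previous step yields a division $\left(m+1\right)$-ary algebra.

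Finally I would transport this conclusion through the isomorphism. By the preceding Corollary, the reduced vectorization $\operatorname*{vec}\nolimits_{\mathbb{F}}^{\times}$ (\ref{vf1}), restricted to the monomial cyclic shift weighted matrices $\mathit{M}_{shf}\left(x\right)$ with $x_i\neq0$ (\ref{ms}), is a polyadic isomorphism onto $\mathfrak{A}_{\operatorname{div}}^{\left[m+1\right]}\left(\mathbb{F}\right)$. On the matrix side these matrices are exactly the invertible $\mathit{Z}$-matrices of the shape (\ref{zn}) with nonzero determinant, which by Theorem \ref{theor-div} constitute the division $n$-ary algebra $\mathbb{D}_{\operatorname{div}}^{\left[n\right]}$ for $n=m+1$ and $\mathbb{D}=\mathbb{F}$. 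The only point requiring care is that this isomorphism must respect the complete algebra structure — the component-wise addition and the scalar action, not only the $\left(m+1\right)$-ary multiplication — so that the word \emph{division} is genuinely carried across. This is immediate, because addition of cyclic shift weighted matrices is component-wise (Remark \ref{rem-add}) and is sent by $\operatorname*{vec}\nolimits_{\mathbb{F}}^{\times}$ to component-wise addition of tuples, with the scalar action treated identically; the reduced vectorization merely deletes the structural zeros and therefore preserves addition, scalars, and the polyadic product simultaneously. Hence $\mathfrak{A}_{\operatorname{div}}^{\left[m+1\right]}\left(\mathbb{F}\right)$ inherits the division property from the matrix model.
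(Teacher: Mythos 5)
Your proposal is correct and follows essentially the same route the paper intends: the Corollary is stated without a separate proof precisely because it is the conjunction of the quervector Proposition (\ref{vx}), the multilinearity already established for the structure-constants Theorem, the preceding Corollary that $\operatorname*{vec}\nolimits_{\mathbb{F}}^{\times}$ becomes a polyadic isomorphism on monomial matrices, and Theorem \ref{theor-div}. Your added remark that the isomorphism also respects component-wise addition and the scalar action is a harmless (and correct) elaboration of what the paper leaves implicit.
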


Let us consider a simple example.

\begin{Example}
In any three-dimensional vector space $\mathfrak{V}_{\mathbb{R}_{3}}$ over
$\mathbb{R}_{3}$ (of any nature, with no additional structures, such as an
inner product, etc., needed), we can define the nonderived associative $4$-ary
product of vectors%
\begin{equation}
\mathit{\mu}_{\star}^{\left[  4\right]  }\left[  \mathbf{v}\left(  x^{\prime
}\right)  ,\mathbf{v}\left(  x^{\prime\prime}\right)  ,\mathbf{v}\left(
x^{\prime\prime\prime}\right)  ,\mathbf{v}\left(  x^{\prime\prime\prime\prime
}\right)  \right]  =\left(  x_{1}^{\prime}x_{2}^{\prime\prime}x_{3}%
^{\prime\prime\prime}x_{1}^{\prime\prime\prime\prime}\right)  \mathbf{e}%
_{1}+\left(  x_{2}^{\prime}x_{3}^{\prime\prime}x_{1}^{\prime\prime\prime}%
x_{2}^{\prime\prime\prime\prime}\right)  \mathbf{e}_{2}+\left(  x_{3}^{\prime
}x_{1}^{\prime\prime}x_{2}^{\prime\prime\prime}x_{3}^{\prime\prime\prime
\prime}\right)  \mathbf{e}_{3}. \label{m4v}%
\end{equation}

\hl{The} vector space $\mathfrak{V}_{\mathbb{R}_{3}}$ equipped with the product
(\ref{m4v}) becomes a nonderived $4$-ary algebra $\mathfrak{A}^{\left[
4\right]  }\left(  \mathbb{R}\right)  $ over $\mathbb{R}$. Its nonzero
structure constants (\ref{me}) are%
\begin{equation}
f_{1231}^{\ 1}=1,\ \ \ \ f_{2312}^{\ 2}=1,\ \ \ \ f_{3123}^{\ 3}=1.
\end{equation}

In the general case, $\mathfrak{A}^{\left[  4\right]  }\left(  \mathbb{R}%
\right)  $ contains zero divisors, but for all nonzero coordinates $x_{i}%
\in\mathbb{R\setminus}\left\{  0\right\}  $, the algebra becomes the division
algebra $\mathfrak{A}_{\operatorname{div}}^{\left[  4\right]  }\left(
\mathbb{R}\right)  $. The polyadic unit is $\mathbf{v}\left(  1\right)
=\mathbf{e}_{1}+\mathbf{e}_{2}+\mathbf{e}_{3}$, and each element
$\mathbf{v}\left(  x\right)  $ has a quervector $\widetilde{\mathbf{v}}\left(
x\right)  $ (polyadic analog of the binary inverse) determined by the equation
(followed from (\ref{mgg}))%
\begin{equation}
\mathit{\mu}_{\star}^{\left[  4\right]  }\left[  \mathbf{v}\left(  x\right)
,\mathbf{v}\left(  x\right)  ,\mathbf{v}\left(  x\right)  ,\widetilde
{\mathbf{v}}\left(  x\right)  \right]  =\mathbf{v}\left(  x\right)  ,
\label{m4q}%
\end{equation}
such that%
\begin{equation}
\widetilde{\mathbf{v}}\left(  x\right)  =\frac{1}{x_{2}x_{3}}\mathbf{e}%
_{1}+\frac{1}{x_{3}x_{1}}\mathbf{e}_{2}+\frac{1}{x_{1}x_{2}}\mathbf{e}%
_{3},\ \ \ x_{i}\in\mathbb{R\setminus}\left\{  0\right\}  .
\end{equation}

As a simple numerical example, take four vectors with all nonvanishing
coordinates, then each one has a quervector with respect to the $4$-ary
product of vectors $\widetilde{\mathbf{v}}$ (\ref{m4q})%
\begin{align}
\mathbf{v}^{\prime}  &  =\mathbf{v}\left(  x^{\prime}\right)  =2\mathbf{e}%
_{1}+3\mathbf{e}_{2}+4\mathbf{e}_{3},\ \ \ \ \ \ \widetilde{\mathbf{v}%
}^{\prime}=\frac{1}{12}\mathbf{e}_{1}+\frac{1}{8}\mathbf{e}_{2}+\frac{1}%
{6}\mathbf{e}_{3},\label{v1}\\
\mathbf{v}^{\prime\prime}  &  =\mathbf{v}\left(  x^{\prime\prime}\right)
=3\mathbf{e}_{1}+1\mathbf{e}_{2}+5\mathbf{e}_{3},\ \ \ \ \ \ \widetilde
{\mathbf{v}}^{\prime\prime}=\frac{1}{5}\mathbf{e}_{1}+\frac{1}{15}%
\mathbf{e}_{2}+\frac{1}{3}\mathbf{e}_{3},\\
\mathbf{v}^{\prime\prime\prime}  &  =\mathbf{v}\left(  x^{\prime\prime\prime
}\right)  =4\mathbf{e}_{1}+3\mathbf{e}_{2}+5\mathbf{e}_{3},\ \ \ \text{\ \ }%
\ \widetilde{\mathbf{v}}^{\prime\prime\prime}=\frac{1}{15}\mathbf{e}_{1}%
+\frac{1}{20}\mathbf{e}_{2}+\frac{1}{12}\mathbf{e}_{3},\\
\mathbf{v}^{\prime\prime\prime\prime}  &  =\mathbf{v}\left(  x^{\prime
\prime\prime\prime}\right)  =5\mathbf{e}_{1}+3\mathbf{e}_{2}+2\mathbf{e}%
_{3},\ \ \ \ \ \ \widetilde{\mathbf{v}}^{\prime\prime\prime\prime}=\frac{1}%
{6}\mathbf{e}_{1}+\frac{1}{10}\mathbf{e}_{2}+\frac{1}{15}\mathbf{e}_{3}.
\label{v4}%
\end{align}

The $4$-ary product (\ref{m4v}) of the vectors (\ref{v1})--(\ref{v4}) is%
\begin{equation}
\mathit{\mu}_{\star}^{\left[  4\right]  }\left[  \mathbf{v}^{\prime
},\mathbf{v}^{\prime\prime},\mathbf{v}^{\prime\prime\prime},\mathbf{v}%
^{\prime\prime\prime\prime}\right]  =50\mathbf{e}_{1}+180\mathbf{e}%
_{2}+75\mathbf{e}_{3}.
\end{equation}

\end{Example}

Thus, we have shown that, in any $m$-dimensional vector space $\mathfrak{V}%
_{\mathbb{F}}$ over a field $\mathbb{F}$, we can introduce a new nonderived
polyadic (or $\left(  m+1\right)  $-ary) product of vectors $\mathit{\mu
}_{\star}^{\left[  m+1\right]  }$ (\ref{mvv}). The corresponding $\left(
m+1\right)  $-ary algebra is associative and becomes a division algebra for
all vectors with nonvanishing coordinates, and it is isomorphic to the
polyadic division algebras obtained by the matrix polyadization procedure
({Section \ref{sec-polydiv}}).

\section{\textsc{Polyadic Imaginary Division Algebras}}

Here, we introduce a non-matrix polyadization procedure which allows us to
obtain ternary division algebras from ordinary binary normed division
algebras. Let us exploit the notations of the concrete hyperembedding approach
from {Section} \ref{subsec-concrete}. First, we show that some
version of ternary division algebra structure can be obtained by a new
iterative process without introducing additional variables.

\begin{Definition}
We define the ternary \textquotedblleft imaginary tower\textquotedblright\ of
algebras by%
\begin{equation}
\overline{\mathbb{A}}_{CD,\ell+1}^{\left[  3\right]  }=\mathbb{A}_{CD}\left(
\mathit{i}_{\ell}\right)  \cdot\mathit{i}_{\ell+1}=\left\langle \left\{
\mathit{z}_{\left(  \ell\right)  }\left(  \mathit{i}_{\ell}\right)
\cdot\mathit{i}_{\ell+1}\right\}  \mid\left(  +\right)  ,\overline{\mu}%
_{\ell+1}^{\left[  3\right]  }\right\rangle , \mathit{i}_{\ell}^{2}%
=-1,\mathit{i}_{\ell+1}\mathit{i}_{\ell}=-\mathit{i}_{\ell}\mathit{i}_{\ell
+1}, \label{a3}%
\end{equation}
where $\ell\geq0$.
\end{Definition}

The multiplication $\overline{\mu}_{\ell+1}^{\left[  3\right]  }$ in
(\ref{a3}) is nonderived ternary, because $\mathit{i}_{\ell}^{2}%
\nsim\mathit{i}_{\ell}$, but $\mathit{i}_{\ell}^{3}=-\mathit{i}_{\ell}$ for
$\ell\geq1$.

\begin{Theorem}
If the initial algebra $\mathbb{A}_{CD}\left(  \mathit{i}_{\ell}\right)  $ is
a normed division algebra, then its iterated imaginary version $\overline
{\mathbb{A}}_{CD,\ell+1}^{\left[  3\right]  }$ (\ref{a3}) is the ternary
division algebra of the same (initial) dimension $D(\ell)$ and norm.
\end{Theorem}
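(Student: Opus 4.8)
The plan is to work throughout in the concrete (hyperembedding) picture of Section~\ref{subsec-concrete}, identifying the underlying set of $\overline{\mathbb{A}}_{CD,\ell+1}^{\left[ 3\right] }$ with $\mathbb{A}_{CD}\left( \mathit{i}_\ell\right) $ via the $\mathbb{R}$-linear bijection $\mathit{z}\mapsto \mathit{z}\cdot\mathit{i}_{\ell+1}$. Since this map sends $\mathbb{A}_\ell$ isomorphically onto the ``purely imaginary'' set $\left\{ \mathit{z}\,\mathit{i}_{\ell+1}\right\} $, the dimension claim $D(\ell)$ is immediate. The first real step is to compute the ternary product explicitly. Writing $w_k=\mathit{z}_k\,\mathit{i}_{\ell+1}$ and using the complex-like multiplication (\ref{mv1})/(\ref{zzl}) with the anticommutation $\mathit{i}_{\ell+1}\mathit{i}_\ell=-\mathit{i}_\ell\mathit{i}_{\ell+1}$ (equivalently, $\mathit{i}_{\ell+1}$ conjugates elements of $\mathbb{A}_\ell$ when commuted past), one finds that the \emph{binary} product $w_1w_2=-\mathit{z}_2^{\,\ast_\ell}\cdot_\ell\mathit{z}_1$ falls out of the imaginary subspace — this is exactly the failure $\mathit{i}_{\ell+1}^2\nsim\mathit{i}_{\ell+1}$ — whereas the ternary product returns to it:
\begin{equation}
\overline{\mu}_{\ell+1}^{\left[ 3\right] }\left[ w_1,w_2,w_3\right] =-\left( \mathit{z}_3\cdot_\ell\left( \mathit{z}_2^{\,\ast_\ell}\cdot_\ell\mathit{z}_1\right) \right) \mathit{i}_{\ell+1}.
\end{equation}
This establishes closure and exhibits $\overline{\mu}_{\ell+1}^{\left[ 3\right] }$ as a genuinely nonderived ternary operation, with bracketing fixed by the ambient binary product.

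Next I would transport the norm. Setting $\left\Vert \mathit{z}\,\mathit{i}_{\ell+1}\right\Vert :=\left\Vert \mathit{z}\right\Vert $ (which equals $\left\Vert \mathit{z}\right\Vert \left\Vert \mathit{i}_{\ell+1}\right\Vert $ since $\left\Vert \mathit{i}_{\ell+1}\right\Vert =1$) reproduces the initial norm verbatim, giving the ``same norm'' assertion. Its ternary multiplicativity then follows from the explicit product above, the multiplicativity (\ref{zz}) of the binary norm on $\mathbb{A}_\ell$, and the conjugation isometry $\left\Vert \mathit{z}^{\ast_\ell}\right\Vert =\left\Vert \mathit{z}\right\Vert $:
\begin{equation}
\left\Vert \overline{\mu}_{\ell+1}^{\left[ 3\right] }\left[ w_1,w_2,w_3\right] \right\Vert =\left\Vert \mathit{z}_3\right\Vert \left\Vert \mathit{z}_2\right\Vert \left\Vert \mathit{z}_1\right\Vert =\left\Vert w_1\right\Vert \left\Vert w_2\right\Vert \left\Vert w_3\right\Vert .
\end{equation}

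For the division property I would argue in two parts. Multiplicativity of the norm forces the ternary product of nonzero elements to be nonzero, so there are no ternary zero divisors. For solvability I fix any two arguments together with the target and solve for the third using the explicit product and the existence of binary inverses in $\mathbb{A}_\ell$ (valid for each of $\mathbb{R},\mathbb{C},\mathbb{H},\mathbb{O}$ by (\ref{z1})): solving for the last slot needs one right-division by the nonzero element $\mathit{z}_2^{\,\ast_\ell}\cdot_\ell\mathit{z}_1$, solving for the first slot a left- and a right-division, and solving for the middle slot the same together with one conjugation — each yielding a unique solution. Hence $\overline{\mathbb{A}}_{CD,\ell+1}^{\left[ 3\right] }$ is a ternary quasigroup whose nonzero elements admit a querelement (\ref{mgg}), which is precisely a ternary division algebra in the sense used here (total associativity is not needed for division and is treated separately).

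The one point requiring genuine care — the main obstacle — is non-associativity. When $\mathbb{A}_\ell=\mathbb{H}$ (ambient $\mathbb{O}$), or even $\mathbb{A}_\ell=\mathbb{O}$ (ambient the sedenions, which is \emph{not} a composition algebra and \emph{does} have zero divisors), one cannot freely regroup, so the computations above must be redone using only the alternative identities $\mathit{a}^{-1}\left( \mathit{a}\,\mathit{b}\right) =\mathit{b}$ and $\left( \mathit{b}\,\mathit{a}\right) \mathit{a}^{-1}=\mathit{b}$ available in $\mathbb{O}$. The decisive observation that rescues the argument is that every product occurring in the ternary formula and in each division step lives inside the composition algebra $\mathbb{A}_\ell$ itself — the new unit $\mathit{i}_{\ell+1}$ enters only linearly — so the multiplicativity (\ref{zz}) and the invertibility of single elements are those of $\mathbb{A}_\ell$ and survive even when the ambient doubled algebra degenerates. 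Thus the construction yields a bona fide ternary division algebra precisely because it never invokes a genuine product in the non-division ambient.
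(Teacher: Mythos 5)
Your route is genuinely different from the paper's: the paper never proves this theorem in a unified way, but verifies it case by case for $\ell=0,1,2$ by writing out the ternary product, the norm and the querelement in explicit components. Your reduction to the single Cayley--Dickson pair identity $(0,\mathit{b}')(0,\mathit{b}'')=(-(\mathit{b}'')^{\ast_{\ell}}\cdot_{\ell}\mathit{b}',0)$ from (\ref{mv1}), yielding the closed form $\overline{\mu}_{\ell+1}^{\left[3\right]}\left[w_1,w_2,w_3\right]=-\left(\mathit{z}_3\cdot_{\ell}\left(\mathit{z}_2^{\ast_{\ell}}\cdot_{\ell}\mathit{z}_1\right)\right)\mathit{i}_{\ell+1}$, is cleaner and more informative, and for $\ell=0,1$ (ambient $\mathbb{C}$ and $\mathbb{H}$, both associative) it correctly delivers closure, the dimension, the transported norm, its ternary multiplicativity, and solvability in every slot. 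Up to that point your argument is sound and arguably an improvement.

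The gap is the half-octonion stage $\ell=2$. There the paper does \emph{not} fix a bracketing: since $(w_1w_2)w_3=-(\mathit{z}_3(\mathit{z}_2^{\ast}\mathit{z}_1))\mathit{i}_3$ while $w_1(w_2w_3)=-(\mathit{z}_1(\mathit{z}_2^{\ast}\mathit{z}_3))\mathit{i}_3$, and these differ for noncommuting $\mathit{z}_1,\mathit{z}_3\in\mathbb{H}$, the paper defines $\overline{\mu}_3^{\left[3\right]}$ as the arithmetic mean (\ref{o33}) of the two. Your ``decisive observation'' --- that every product lands back in the composition algebra $\mathbb{A}_{\ell}$ --- addresses well-definedness and invertibility of individual factors, but not this bracketing ambiguity, which is the actual obstruction; alternativity is beside the point because both bracketings already reduce to honest quaternion products. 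For the symmetrized product your conclusions fail concretely: taking $\mathit{z}_1=\mathit{i}_2$, $\mathit{z}_2=1$, $\mathit{z}_3=\mathit{i}_1$ gives $\mathit{z}_3\mathit{z}_2^{\ast}\mathit{z}_1+\mathit{z}_1\mathit{z}_2^{\ast}\mathit{z}_3=\mathit{i}_1\mathit{i}_2+\mathit{i}_2\mathit{i}_1=0$, so the ternary product of three nonzero elements vanishes --- norm multiplicativity and your ``no ternary zero divisors'' claim are both false here --- and the middle-slot map $y\mapsto\mathit{z}_3y\mathit{z}_1+\mathit{z}_1y\mathit{z}_3$ is singular, so the ternary-quasigroup (unique solvability) claim is false as well. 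What survives, and what the paper actually establishes, is only the weaker property it calls division: the querelement equation (\ref{mgg}) has its first two arguments equal, for which the two bracketings coincide because $\mathit{z}^{\ast}\mathit{z}=\left\Vert\mathit{z}\right\Vert^2$ is a central scalar, yielding (\ref{zq3}). So you must either adopt a fixed bracketing (thereby proving a theorem about a different algebra than the paper's $\overline{\mathbb{O}}^{\left[3\right]}$) or restrict your division and norm claims at $\ell=2$ to querelement existence, proved by the scalar-norm argument just described.
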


\begin{Proposition}
The ternary algebras $\overline{\mathbb{A}}_{CD,\ell+1}^{\left[  3\right]  }$
are not subalgebras of the initial algebras $\mathbb{A}_{CD}\left(
\mathit{i}_{\ell}\right)  $.
\end{Proposition}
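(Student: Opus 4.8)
The plan is to prove the statement by exhibiting a closure failure: the underlying subset of $\overline{\mathbb{A}}_{CD,\ell+1}^{\left[  3\right]  }$ is not stable under the binary multiplication of the host Cayley--Dickson algebra in which its elements live, whereas it is stable under the genuinely ternary product $\overline{\mu}_{\ell+1}^{\left[  3\right]  }$. Since a subalgebra must in particular be closed under the binary operation of the surrounding algebra, this rules out a binary subalgebra structure; being closed only under a nonderived ternary law, the subset has a different arity and is a subset only. (Note also that the elements $\mathit{z}\,\mathit{i}_{\ell+1}$ are not even contained in $\mathbb{A}_{CD}\left(  \mathit{i}_{\ell}\right)  $ as written, so the substantive claim is non-closure inside the host binary division algebra they generate.)

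First I would fix the form of a generic element. By (\ref{a3}), every $\mathit{w}\in\overline{\mathbb{A}}_{CD,\ell+1}^{\left[  3\right]  }$ is $\mathit{w}=\mathit{z}\,\mathit{i}_{\ell+1}$ with $\mathit{z}\in\mathbb{A}_{CD}\left(  \mathit{i}_{\ell}\right)  $; these are precisely the doubles of (\ref{v2}) with vanishing real part. The anticommutation $\mathit{i}_{\ell+1}\mathit{i}_{\ell}=-\mathit{i}_{\ell}\mathit{i}_{\ell+1}$ of (\ref{a3}), equivalently the recurrence (\ref{mv1})--(\ref{abl}), yields the conjugation rule $\mathit{i}_{\ell+1}\mathit{z}=\mathit{z}^{\ast_{\ell}}\mathit{i}_{\ell+1}$ for all $\mathit{z}\in\mathbb{A}_{CD}\left(  \mathit{i}_{\ell}\right)  $, where $\ast_{\ell}$ is the involution of that algebra.

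Next I would compute the binary product and exhibit an explicit escape. Using associativity and the conjugation rule,
\begin{equation}
\mathit{w}^{\prime}\mathit{w}^{\prime\prime}=\left(  \mathit{z}^{\prime}\mathit{i}_{\ell+1}\right)  \left(  \mathit{z}^{\prime\prime}\mathit{i}_{\ell+1}\right)  =\mathit{z}^{\prime}\left(  \mathit{z}^{\prime\prime}\right)  ^{\ast_{\ell}}\mathit{i}_{\ell+1}^{2}=-\mathit{z}^{\prime}\left(  \mathit{z}^{\prime\prime}\right)  ^{\ast_{\ell}}\in\mathbb{A}_{CD}\left(  \mathit{i}_{\ell}\right)  ,
\end{equation}
which carries no factor of $\mathit{i}_{\ell+1}$ and hence is not of the form $\mathit{z}\,\mathit{i}_{\ell+1}$ (the same conclusion follows from the tuple product (\ref{mv1}), whose output has vanishing $\mathit{i}_{\ell+1}$-component). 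The simplest witness is $\mathit{z}^{\prime}=\mathit{z}^{\prime\prime}=\mathit{1}$, giving $\mathit{w}^{\prime}=\mathit{w}^{\prime\prime}=\mathit{i}_{\ell+1}$ and $\mathit{w}^{\prime}\mathit{w}^{\prime\prime}=\mathit{i}_{\ell+1}^{2}=-\mathit{1}$; were $-\mathit{1}=\mathit{z}\,\mathit{i}_{\ell+1}$, this would force $\mathit{z}=\mathit{i}_{\ell+1}\notin\mathbb{A}_{CD}\left(  \mathit{i}_{\ell}\right)  $, a contradiction. Thus the subset is not closed under the inherited binary product.

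To finish, I would contrast this with the ternary product, where one further multiplication returns to the set: $\mathit{w}^{\prime}\mathit{w}^{\prime\prime}\mathit{w}^{\prime\prime\prime}=-\mathit{z}^{\prime}\left(  \mathit{z}^{\prime\prime}\right)  ^{\ast_{\ell}}\mathit{z}^{\prime\prime\prime}\mathit{i}_{\ell+1}$ is again of the form $\mathit{z}\,\mathit{i}_{\ell+1}$, which is exactly the closure underlying the ternary division structure of the preceding Theorem and is consistent with the remark after (\ref{a3}) that $\mathit{i}_{\ell+1}^{2}\nsim\mathit{i}_{\ell+1}$ while $\mathit{i}_{\ell+1}^{3}=-\mathit{i}_{\ell+1}$. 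Hence the only closed law on this subset is nonderived and ternary, so no binary subalgebra structure exists on it. I expect the only substantive point to be conceptual rather than computational: one must phrase ``subalgebra'' so that a subset carrying a genuinely ternary nonderived law is excluded from being a binary subalgebra of the host, after which the explicit escape $\mathit{i}_{\ell+1}^{2}=-\mathit{1}\notin\overline{\mathbb{A}}_{CD,\ell+1}^{\left[  3\right]  }$ closes the argument at once.
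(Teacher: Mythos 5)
Your argument is correct, but it does more work than the paper, which disposes of the proposition in one sentence: the multiplications have different arities (ternary versus binary), so a subalgebra relation is impossible by definition, even though the underlying set is a subset. You make that same arity point at the end, but you additionally prove the sharper and more informative fact that the subset $\left\{ \mathit{z}\,\mathit{i}_{\ell+1}\right\}$ is not closed under the inherited binary product at all --- via the Cayley--Dickson rule $(0,b)(0,d)=(-d^{\ast}b,0)$ and the explicit witness $\mathit{i}_{\ell+1}^{2}=-\mathit{1}\notin\overline{\mathbb{A}}_{CD,\ell+1}^{\left[3\right]}$ --- while one further multiplication returns to the set. This buys something the paper's proof does not: it shows the ternary structure is \emph{forced} on this subset rather than merely chosen, i.e.\ no binary subalgebra structure could be salvaged even in principle. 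Your parenthetical observation that the elements $\mathit{z}\,\mathit{i}_{\ell+1}$ do not literally lie in $\mathbb{A}_{CD}\left(\mathit{i}_{\ell}\right)$ but in the next algebra up is also a fair reading of a notational looseness the paper glosses over. Two minor cautions: for the octonion level the host algebra is nonassociative, so the manipulation $\left(\mathit{z}'\mathit{i}_{\ell+1}\right)\left(\mathit{z}''\mathit{i}_{\ell+1}\right)=\mathit{z}'\left(\mathit{z}''\right)^{\ast_{\ell}}\mathit{i}_{\ell+1}^{2}$ should be justified by the doubling formula (or Artin's theorem on two-generated subalgebras) rather than by ``associativity'' --- your fallback to the tuple product (\ref{mv1}) does cover this; and for $\ell=2$ the closed ternary law is the symmetrized product (\ref{o33}), though each association separately lands in the set, so your closure claim still holds.
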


\begin{proof}
This is because the multiplications in the above algebras have different
arities, despite the underlying sets of imaginary algebras being subsets of
the corresponding initial algebras.
\end{proof}

Let us now present the concrete expressions for the initial division algebras.

\subsection{Complex Number Ternary Division Algebra}

The first algebra (with $\ell=0$) is the ternary division algebra of pure
imaginary complex numbers having the dimension $D\left(  0\right)  =D\left(
\mathbb{R}\right)  =1$%
\begin{equation}
\overline{\mathbb{A}}_{CD,1}^{\left[  3\right]  }\equiv\overline{\mathbb{C}%
}^{\left[  3\right]  }=\mathbb{R}\cdot\mathit{i}_{1},\ \ \mathit{z}_{\left(
1\right)  }=b\mathit{i}_{1}\in\mathbb{C},\ \ \ \mathit{i}_{1}^{2}%
=-1,\ \ b\in\mathbb{R},
\end{equation}
which is unitless. The multiplication in $\overline{\mathbb{C}}^{\left[
3\right]  }$ is ternary nonderived and commutative%
\begin{equation}
\ \overline{\mu}_{1}^{\left[  3\right]  }\left[  \mathit{z}_{\left(  1\right)
}^{\prime},\mathit{z}_{\left(  1\right)  }^{\prime\prime},\mathit{z}_{\left(
1\right)  }^{\prime\prime\prime}\right]  =-b^{\prime}b^{\prime\prime}%
b^{\prime\prime\prime}\mathit{i}_{1}\in\overline{\mathbb{C}}^{\left[
3\right]  }. \label{m3}%
\end{equation}
The norm is the absolute value in $\mathbb{C}$ (module) $\left\Vert
\mathit{z}_{\left(  1\right)  }\right\Vert _{\left(  1\right)  }=\left\vert
b\right\vert $, and it is ternary multiplicative (from (\ref{m3}))%
\begin{equation}
\left\Vert \overline{\mu}_{1}^{\left[  3\right]  }\left[  \mathit{z}_{\left(
1\right)  }^{\prime},\mathit{z}_{\left(  1\right)  }^{\prime\prime}%
,\mathit{z}_{\left(  1\right)  }^{\prime\prime\prime}\right]  \right\Vert
_{\left(  1\right)  }=\left\Vert \mathit{z}_{\left(  1\right)  }^{\prime
}\right\Vert _{\left(  1\right)  }\left\Vert \mathit{z}_{\left(  1\right)
}^{\prime\prime}\right\Vert _{\left(  1\right)  }\left\Vert \mathit{z}%
_{\left(  1\right)  }^{\prime\prime\prime}\right\Vert _{\left(  1\right)  }%
\in\mathbb{R}, \label{mn3}%
\end{equation}
such that the corresponding map $\overline{\mathbb{C}}^{\left[  3\right]
}\rightarrow\mathbb{R}$ is a ternary homomorphism. The querelement of
$\mathit{z}_{\left(  1\right)  }$ (\ref{mgg}) is now defined by%
\begin{equation}
\overline{\mu}_{1}^{\left[  3\right]  }\left[  \mathit{z}_{\left(  1\right)
},\mathit{z}_{\left(  1\right)  },\widetilde{\mathit{z}}_{\left(  1\right)
}\right]  =\mathit{z}_{\left(  1\right)  }, \label{m31}%
\end{equation}
which gives%
\begin{equation}
\widetilde{\mathit{z}}_{\left(  1\right)  }=\frac{1}{\mathit{z}_{\left(
1\right)  }}=-\frac{\mathit{i}_{1}}{b},\ \ \ b\in\mathbb{R\setminus}\left\{
0\right\}  .
\end{equation}

Thus, $\overline{\mathbb{C}}^{\left[  3\right]  }$ is a ternary commutative
algebra, which is indeed a division algebra, because each nonzero element has
a querelement, i.e., it is invertible, so all equations of type (\ref{m31})
with different $\mathit{z}^{\prime}s$ have a solution.

\subsection{Half-Quaternion Ternary Division Algebra}

The next iteration case ($\ell=1$) of the \textquotedblleft imaginary
tower\textquotedblright\ (\ref{a3}) is more complicated and leads to pure
imaginary ternary quaternions of dimension $D\left(  1\right)  =D\left(
\mathbb{C}\right)  =2$%
\begin{align}
\overline{\mathbb{A}}_{CD,2}^{\left[  3\right]  }  &  \equiv\overline
{\mathbb{H}}^{\left[  3\right]  }=\mathbb{C}\left(  \mathit{i}_{1}\right)
\cdot\mathit{i}_{2},\ \ \ \ \ \mathit{z}_{\left(  2\right)  }=\left(
c+d\mathit{i}_{1}\right)  \mathit{i}_{2}=c\mathit{i}_{2}+d\mathit{i}%
_{1}\mathit{i}_{2}\in\overline{\mathbb{H}}^{\left[  3\right]  },\nonumber\\
\mathit{i}_{1}^{2}  &  =\mathit{i}_{2}^{2}=-1,\ \ \ \ \mathit{i}_{1}%
\mathit{i}_{2}=-\mathit{i}_{2}\mathit{i}_{1},\ \ \ \mathit{i}_{1}\in
\mathbb{C},\ \ \ \mathit{i}_{2}\in\mathbb{H\setminus C},\ \ \ \ \ c,d\in
\mathbb{R}. \label{ah3}%
\end{align}

\hl{We} can informally call $\overline{\mathbb{H}}^{\left[  3\right]  }$ the
ternary algebra of imaginary \textquotedblleft
half-quaternions\textquotedblright, because in the standard notation (see
 Example \ref{exam-quat1}) $\mathit{z}_{\left(  2\right)  }$\ from
(\ref{ah3}) is $\mathit{q}=a+b\mathit{i}+c\mathit{j}+d\mathit{k}%
\overset{a=0,b=0}{\longrightarrow}\mathit{q}_{half}=c\mathit{j}+d\mathit{k}$.
The nonderived ternary algebra $\overline{\mathbb{H}}^{\left[  3\right]  }$ is
obviously unitless. The multiplication of the half-quaternions $\overline
{\mathbb{H}}^{\left[  3\right]  }$%
\begin{align}
\ \overline{\mu}_{2}^{\left[  3\right]  }\left[  \mathit{z}_{\left(  2\right)
}^{\prime},\mathit{z}_{\left(  2\right)  }^{\prime\prime},\mathit{z}_{\left(
2\right)  }^{\prime\prime\prime}\right]   &  =\mathit{z}_{\left(  2\right)
}^{\prime}\cdot\mathit{z}_{\left(  2\right)  }^{\prime\prime}\cdot
\mathit{z}_{\left(  2\right)  }^{\prime\prime\prime}=\left(  d^{\prime
}c^{\prime\prime}d^{\prime\prime\prime}-c^{\prime}d^{\prime\prime}%
d^{\prime\prime\prime}-d^{\prime}d^{\prime\prime}c^{\prime\prime\prime
}-c^{\prime}c^{\prime}c^{\prime\prime\prime}\right)  \mathit{i}_{2}\nonumber\\
&  +\left(  c^{\prime}d^{\prime\prime}c^{\prime\prime\prime}-d^{\prime
}c^{\prime\prime}c^{\prime\prime\prime}-c^{\prime}c^{\prime\prime}%
d^{\prime\prime\prime}-d^{\prime}d^{\prime\prime}d^{\prime\prime\prime
}\right)  \mathit{i}_{1}\mathit{i}_{2}\in\overline{\mathbb{H}}^{\left[
3\right]  }, \label{m33}%
\end{align}
is ternary nonderived (i.e., the algebra is closed with respect to the product of three
elements, but not fewer), noncommutative, and totally ternary associative
(\ref{ma})%
\begin{align}
\overline{\mu}_{2}^{\left[  3\right]  }\left[  \overline{\mu}_{2}^{\left[
3\right]  }\left[  \mathit{z}_{\left(  2\right)  }^{\prime},\mathit{z}%
_{\left(  2\right)  }^{\prime\prime},\mathit{z}_{\left(  2\right)  }%
^{\prime\prime\prime}\right]  \mathit{z}_{\left(  2\right)  }^{\prime
\prime\prime\prime},\mathit{z}_{\left(  2\right)  }^{\prime\prime\prime
\prime\prime}\right]   &  =\overline{\mu}_{2}^{\left[  3\right]  }\left[
\mathit{z}_{\left(  2\right)  }^{\prime},\overline{\mu}_{2}^{\left[  3\right]
}\left[  \mathit{z}_{\left(  2\right)  }^{\prime\prime},\mathit{z}_{\left(
2\right)  }^{\prime\prime\prime}\mathit{z}_{\left(  2\right)  }^{\prime
\prime\prime\prime}\right]  ,\mathit{z}_{\left(  2\right)  }^{\prime
\prime\prime\prime\prime}\right] \nonumber\\
&  =\overline{\mu}_{2}^{\left[  3\right]  }\left[  \mathit{z}_{\left(
2\right)  }^{\prime},\mathit{z}_{\left(  2\right)  }^{\prime\prime}%
,\overline{\mu}_{2}^{\left[  3\right]  }\left[  \mathit{z}_{\left(  2\right)
}^{\prime\prime\prime}\mathit{z}_{\left(  2\right)  }^{\prime\prime
\prime\prime},\mathit{z}_{\left(  2\right)  }^{\prime\prime\prime\prime\prime
}\right]  \right]  . \label{aq}%
\end{align}

The norm is defined by the absolute value in the quaternion algebra
$\mathbb{H}$ in the standard way%
\begin{equation}
\left\Vert \mathit{z}_{\left(  2\right)  }\right\Vert _{\left(  2\right)
}=\left\vert \mathit{z}_{\left(  2\right)  }\right\vert =\sqrt{c^{2}+d^{2}}.
\label{n2}%
\end{equation}

The norm (\ref{n2}) is ternary multiplicative%
\begin{equation}
\left\Vert \overline{\mu}_{2}^{\left[  3\right]  }\left[  \mathit{z}_{\left(
2\right)  }^{\prime},\mathit{z}_{\left(  2\right)  }^{\prime\prime}%
,\mathit{z}_{\left(  2\right)  }^{\prime\prime\prime}\right]  \right\Vert
_{\left(  2\right)  }=\left\Vert \mathit{z}_{\left(  2\right)  }^{\prime
}\right\Vert _{\left(  2\right)  }\left\Vert \mathit{z}_{\left(  2\right)
}^{\prime\prime}\right\Vert _{\left(  2\right)  }\left\Vert \mathit{z}%
_{\left(  2\right)  }^{\prime\prime\prime}\right\Vert _{\left(  2\right)  }%
\in\mathbb{R}, \label{m23}%
\end{equation}
such that the corresponding map $\overline{\mathbb{H}}^{\left[  3\right]
}\rightarrow\mathbb{R}$ is a ternary homomorphism.

\begin{Proposition}
The ternary analog of the sum of two squares' identity is%
\begin{align}
&  \left(  d^{\prime}c^{\prime\prime}d^{\prime\prime\prime}-c^{\prime
}d^{\prime\prime}d^{\prime\prime\prime}-c^{\prime}c^{\prime}c^{\prime
\prime\prime}-d^{\prime}d^{\prime\prime}c^{\prime\prime\prime}\right)
^{2}+\left(  c^{\prime}d^{\prime\prime}c^{\prime\prime\prime}-d^{\prime
}c^{\prime\prime}c^{\prime\prime\prime}-c^{\prime}c^{\prime\prime}%
d^{\prime\prime\prime}-d^{\prime}d^{\prime\prime}d^{\prime\prime\prime
}\right)  ^{2}\nonumber\\
&  =\left(  (c^{\prime})^{2}+(d^{\prime})^{2}\right)  \left(  (c^{\prime
\prime})^{2}+(d^{\prime\prime})^{2}\right)  \left(  (c^{\prime\prime\prime
})^{2}+(d^{\prime\prime\prime})^{2}\right)  . \label{s}%
\end{align}

\end{Proposition}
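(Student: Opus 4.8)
The plan is to read the identity (\ref{s}) off directly from the ternary multiplicativity of the half-quaternion norm (\ref{m23}), combined with the explicit product (\ref{m33}) and the norm definition (\ref{n2}); it is the squared, coordinate-wise transcription of that multiplicativity, exactly as the classical two-squares identity is the transcription of the multiplicativity of the complex modulus.

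First I would note that, by (\ref{m33}), the ternary product $\overline{\mu}_{2}^{\left[3\right]}\left[\mathit{z}_{\left(2\right)}^{\prime},\mathit{z}_{\left(2\right)}^{\prime\prime},\mathit{z}_{\left(2\right)}^{\prime\prime\prime}\right]$ is again a half-quaternion of the form $C\mathit{i}_{2}+D\mathit{i}_{1}\mathit{i}_{2}$, whose coefficient $C$ of $\mathit{i}_{2}$ and coefficient $D$ of $\mathit{i}_{1}\mathit{i}_{2}$ are precisely the two real expressions appearing inside the squares on the left-hand side of (\ref{s}). Because $\mathit{i}_{2}$ and $\mathit{i}_{1}\mathit{i}_{2}$ correspond to the units $\mathit{j}$ and $\mathit{k}$ in the standard quaternion parametrization (\ref{qa}), the quaternionic absolute value (\ref{n2}) of this product equals $\sqrt{C^{2}+D^{2}}$, that is, the square root of the left-hand side of (\ref{s}).

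Next I would invoke the ternary multiplicativity (\ref{m23}) together with $\left\Vert\mathit{z}_{\left(2\right)}\right\Vert_{\left(2\right)}=\sqrt{c^{2}+d^{2}}$ for each factor. This asserts that $\sqrt{C^{2}+D^{2}}$ equals the product of the three norms $\sqrt{(c^{\prime})^{2}+(d^{\prime})^{2}}$, $\sqrt{(c^{\prime\prime})^{2}+(d^{\prime\prime})^{2}}$ and $\sqrt{(c^{\prime\prime\prime})^{2}+(d^{\prime\prime\prime})^{2}}$. Squaring both sides removes the radicals and, after substituting the explicit forms of $C$ and $D$ from (\ref{m33}), yields exactly the right-hand side of (\ref{s}).

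There is no genuine obstacle here once multiplicativity (\ref{m23}) is granted; the only care required is bookkeeping. One must correctly match the two components of the product (\ref{m33}) to the two squared terms of (\ref{s}), and verify that the pair $\mathit{i}_{2},\mathit{i}_{1}\mathit{i}_{2}$ is orthonormal in $\mathbb{H}$, so that no cross term $2CD$ survives when expanding $\left\Vert C\mathit{i}_{2}+D\mathit{i}_{1}\mathit{i}_{2}\right\Vert_{\left(2\right)}^{2}$ and the norm is genuinely $C^{2}+D^{2}$. One could of course instead verify (\ref{s}) by brute-force polynomial expansion in the six real variables, but that route is far longer and obscures the conceptual origin of the identity as a ternary analog of the Brahmagupta--Fibonacci two-squares identity, with the binary multiplicativity of the modulus replaced by the ternary multiplicativity of the half-quaternion norm.
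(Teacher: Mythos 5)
Your proposal is correct and takes essentially the same route as the paper, whose entire proof is that the identity follows from the explicit ternary product (\ref{m33}) together with the multiplicativity (\ref{m23}) of the norm (\ref{n2}); you simply spell out the bookkeeping (identifying the two components $C,D$ with the coefficients of $\mathit{i}_{2}$ and $\mathit{i}_{1}\mathit{i}_{2}$ and squaring). The only caveat is inherited from the paper itself: the term $c^{\prime}c^{\prime}c^{\prime\prime\prime}$ appearing in both (\ref{m33}) and (\ref{s}) should read $c^{\prime}c^{\prime\prime}c^{\prime\prime\prime}$ for the identity to hold, which a direct expansion of $(c^{\prime}\mathit{i}_{2}+d^{\prime}\mathit{i}_{1}\mathit{i}_{2})(c^{\prime\prime}\mathit{i}_{2}+d^{\prime\prime}\mathit{i}_{1}\mathit{i}_{2})(c^{\prime\prime\prime}\mathit{i}_{2}+d^{\prime\prime\prime}\mathit{i}_{1}\mathit{i}_{2})$ confirms.
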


\begin{proof}
It follows from the ternary multiplication formula (\ref{m33}) and the
multiplicativity (\ref{m23}) of the norm (\ref{n2}).
\end{proof}

\begin{Remark}
The ternary sum of two squares' identity (\ref{s}) cannot be derived from the
binary sum of two squares' identity (Diophantus, Fibonacci) or from Euler's sum
of four squares' identity, while it can be considered an intermediate (triple) identity.
\end{Remark}

The querelement of $\mathit{z}_{\left(  2\right)  }$ (\ref{mgg}) is now
defined by%
\begin{equation}
\overline{\mu}_{2}^{\left[  3\right]  }\left[  \mathit{z}_{\left(  2\right)
},\mathit{z}_{\left(  2\right)  },\widetilde{\mathit{z}}_{\left(  2\right)
}\right]  =\mathit{z}_{\left(  2\right)  },
\end{equation}
which gives%
\begin{equation}
\widetilde{\mathit{z}}_{\left(  2\right)  }=-\frac{c\mathit{i}_{2}%
+d\mathit{i}_{1}\mathit{i}_{2}}{c^{2}+d^{2}}\in\overline{\mathbb{H}}^{\left[
3\right]  },\ \ \ c^{2}+d^{2}\neq0,\ \ \ c,d\in\mathbb{R}. \label{zq2}%
\end{equation}
\hl{It} follows from (\ref{zq2}) that $\overline{\mathbb{H}}^{\left[  3\right]  }$
(\ref{ah3}) is a noncommutative nonderived ternary algebra, which is indeed a
division algebra, because each nonzero element $\mathit{z}_{\left(  2\right)
}$ has a querelement $\widetilde{\mathit{z}}_{\left(  2\right)  }$, i.e., it is
invertible, and equations of type (\ref{m31}) have solutions.

\subsection{Half-Octonion Ternary Division Algebra}

The next case ($\ell=2$) gives pure imaginary ternary octonions of dimension
$D\left(  2\right)  =D\left(  \mathbb{H}\right)  =4$%

\begin{align}
\overline{\mathbb{A}}_{CD,3}^{\left[  3\right]  } &  \equiv\overline
{\mathbb{O}}^{\left[  3\right]  }=\mathbb{H}\left(  \mathit{i}_{1}%
,\mathit{i}_{2}\right)  \cdot\mathit{i}_{3},\mathit{z}_{\left(  3\right)
}=\left(  a+b\mathit{i}_{1}+c\mathit{i}_{2}+d\mathit{i}_{1}\mathit{i}%
_{2}\right)  \mathit{i}_{3}=a\mathit{i}_{3}+b\mathit{i}_{1}\mathit{i}%
_{3}+c\mathit{i}_{2}\mathit{i}_{3}+d\mathit{i}_{1}\mathit{i}_{2}\mathit{i}%
_{3},\nonumber\\
\mathit{i}_{1}^{2} &  =\mathit{i}_{2}^{2}=\mathit{i}_{3}^{2}%
=-1,\ \ \ \ \mathit{i}_{1}\mathit{i}_{2}=-\mathit{i}_{2}\mathit{i}%
_{1},\ \ \ \ \mathit{i}_{1}\mathit{i}_{3}=-\mathit{i}_{3}\mathit{i}%
_{1},\ \ \ \ \mathit{i}_{2}\mathit{i}_{3}=-\mathit{i}_{3}\mathit{i}%
_{2},\ \ \ \nonumber\\
\mathit{i}_{1} &  \in\mathbb{C},\ \ \ \mathit{i}_{2}\in\mathbb{H\setminus
C},\ \ \ \mathit{i}_{3}\in\mathbb{O\setminus H},\ \ \ \ \ a,b,c,d\in
\mathbb{R}.\label{o3}%
\end{align}
\hl{We} informally can call $\overline{\mathbb{O}}^{\left[  3\right]  }$ the
nonderived ternary algebra of imaginary ``half-octonions'', which is obviously unitless. In the
standard notation (with seven imaginary units $\mathit{e}_{1}\ldots
\mathit{e}_{7}$), the \textquotedblleft half-octonion\textquotedblright\ is%
\begin{equation}
\mathit{o}_{half}=\mathit{z}_{\left(  3\right)  }=a\mathit{e}_{4}%
+b\mathit{e}_{5}+c\mathit{e}_{6}+d\mathit{e}_{7}.\label{oh}%
\end{equation}

\begin{Remark}
It is well known (see, e.g., \cite{bae2002,dra/man}), that the algebra of
octonions $\mathbb{O}$ is not a field, but a special nonassociative loop
(quasigroup with an identity), the Moufang loop (see, e.g.,
\cite{bru/kle,pum2014}). Because the ternary algebra $\overline{\mathbb{O}%
}^{\left[  3\right]  }$ of imaginary \textquotedblleft
half-octonions\textquotedblright\ is unitless, it cannot be a ternary loop
\cite{ono/urs}.
\end{Remark}

Recall that the binary algebra of ordinary octonions $\mathbb{O=}\left\langle
\left\{  \mathit{z}\right\}  \mid\left(  +\right)  ,\left(  \bullet\right)
\right\rangle $ is not associative, and therefore, a triple product in
$\mathbb{O}$ is not unique, because $\left(  \mathit{z}^{\prime}%
\bullet\mathit{z}^{\prime\prime}\right)  \bullet\mathit{z}^{\prime\prime
\prime}\neq\mathit{z}^{\prime}\bullet\left(  \mathit{z}^{\prime\prime}%
\bullet\mathit{z}^{\prime\prime\prime}\right)  $, $\mathit{z}\in\mathbb{O}$.
We introduce the ternary multiplication for the \textquotedblleft
half-octonions\textquotedblright\ $\mathit{z}_{\left(  3\right)  }\in
\overline{\mathbb{O}}^{\left[  3\right]  }$ (\ref{oh})\ in the unique way as
the following arithmetic mean%
\begin{equation}
\ \overline{\mu}_{3}^{\left[  3\right]  }\left[  \mathit{z}_{\left(  3\right)
}^{\prime},\mathit{z}_{\left(  3\right)  }^{\prime\prime},\mathit{z}_{\left(
3\right)  }^{\prime\prime\prime}\right]  =\dfrac{\left(  \mathit{z}_{\left(
3\right)  }^{\prime}\bullet\mathit{z}_{\left(  3\right)  }^{\prime\prime
}\right)  \bullet\mathit{z}_{\left(  3\right)  }^{\prime\prime\prime
}+\mathit{z}_{\left(  3\right)  }^{\prime}\bullet\left(  \mathit{z}_{\left(
3\right)  }^{\prime\prime}\bullet\mathit{z}_{\left(  3\right)  }^{\prime
\prime\prime}\right)  }{2}. \label{o33}%
\end{equation}

\begin{Proposition}
The nonderived nonunital ternary algebra of imaginary \textquotedblleft
half-octonions\textquotedblright\ (\ref{o3})%
\begin{equation}
\overline{\mathbb{O}}^{\left[  3\right]  }=\left\langle \left\{
\mathit{z}_{\left(  3\right)  }\right\}  \mid\left(  +\right)  ,\overline{\mu
}_{3}^{\left[  3\right]  }\right\rangle
\end{equation}
is closed under the multiplication (\ref{o33}) and ternary associative.
\end{Proposition}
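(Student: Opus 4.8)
The plan is to establish the two assertions of the proposition separately: closure under $\overline{\mu}_3^{\left[3\right]}$ is essentially formal, whereas total associativity (\ref{ma}) is the substantive claim and is where the real work lies.

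For closure I would use the $\mathbb{Z}_2$-grading of $\mathbb{O}$ coming from the last Cayley--Dickson doubling, $\mathbb{O}=\mathbb{H}\oplus\mathbb{H}\mathit{i}_3$. Writing $W=\mathbb{H}\mathit{i}_3$ for the odd summand — which by (\ref{o3}) is exactly the underlying set of $\overline{\mathbb{O}}^{\left[3\right]}$ — the doubling product (\ref{mv1})--(\ref{abl}) gives the grading rules $W\bullet W\subseteq\mathbb{H}$ and $\mathbb{H}\bullet W,\ W\bullet\mathbb{H}\subseteq W$. Hence for $\mathit{z}_{\left(3\right)}^{\prime},\mathit{z}_{\left(3\right)}^{\prime\prime},\mathit{z}_{\left(3\right)}^{\prime\prime\prime}\in W$ each of the two octonionic bracketings $\bigl(\mathit{z}_{\left(3\right)}^{\prime}\bullet\mathit{z}_{\left(3\right)}^{\prime\prime}\bigr)\bullet\mathit{z}_{\left(3\right)}^{\prime\prime\prime}$ and $\mathit{z}_{\left(3\right)}^{\prime}\bullet\bigl(\mathit{z}_{\left(3\right)}^{\prime\prime}\bullet\mathit{z}_{\left(3\right)}^{\prime\prime\prime}\bigr)$ is of the type $(\text{even})\bullet(\text{odd})$ or $(\text{odd})\bullet(\text{even})$ and therefore lies in $W$; since $W$ is an $\mathbb{R}$-subspace, so does their arithmetic mean (\ref{o33}), which proves closure.

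For associativity I would first replace $\overline{\mu}_3^{\left[3\right]}$ by an explicit model on the associative algebra $\mathbb{H}$. Under the identification $W\cong\mathbb{H}$, $q\,\mathit{i}_3\leftrightarrow q$, formulas (\ref{mv1})--(\ref{abl}) reduce the two bracketings to the quaternionic words $q_3 q_2^{\ast}q_1$ and $q_1 q_2^{\ast}q_3$, so that (\ref{o33}) becomes the ternary operation $\overline{\mu}_3^{\left[3\right]}\!\left(q_1,q_2,q_3\right)=-\tfrac12\bigl(q_1 q_2^{\ast}q_3+q_3 q_2^{\ast}q_1\bigr)$, now built solely from the associative product and the $\mathbb{R}$-linear conjugation of $\mathbb{H}$. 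Because this expression is $\mathbb{R}$-multilinear in its three slots, the desired identity (\ref{ma}) — equality of the three placements of the inner bracket inside a five-fold product — is itself $\mathbb{R}$-multilinear in the five arguments, and it therefore suffices to verify it on the basis $\{1,\mathit{i}_1,\mathit{i}_2,\mathit{i}_1\mathit{i}_2\}$ of $\mathbb{H}$.

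I expect this five-fold identity to be the main obstacle. Expanding the three placements, the two outer ones share two \textquotedblleft palindromic\textquotedblright\ words with the middle one, and equality reduces to comparing the remaining quaternionic monomials; writing the difference of two placements in the form $\bigl[\,q_3 q_2^{\ast},\,q_1 q_4^{\ast}\,\bigr]q_5+q_5\bigl[\,q_4^{\ast}q_1,\,q_2^{\ast}q_3\,\bigr]$ in terms of quaternion commutators, one sees that these commutators are pure imaginary and in general non-central, so their cancellation is exactly the delicate point that the proof must secure. I would therefore concentrate on controlling these commutator terms — via the finite basis check above, or more structurally via the alternating property of the octonionic associator and the Moufang identities — since neither closure nor the passage to $\mathbb{H}$ presents any real difficulty, and a direct test on the quintuple $(\mathit{i}_1\mathit{i}_3,\mathit{i}_2\mathit{i}_3,\mathit{i}_1\mathit{i}_2\mathit{i}_3,\mathit{i}_2\mathit{i}_3,\mathit{i}_1\mathit{i}_3)$ would be the first sanity check I would run to confirm that the three placements indeed coincide before committing to the general argument.
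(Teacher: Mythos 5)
Your closure argument is fine: the $\mathbb{Z}_2$-grading $\mathbb{O}=\mathbb{H}\oplus\mathbb{H}\mathit{i}_{3}$ forces both octonionic bracketings of a triple of odd elements to be odd, hence so is their mean. This is more explicit than the paper's proof, which simply declares closure obvious and asserts that associativity ``follows from (\ref{o33})'' without any computation. But for the associativity half your proposal stops exactly where the proof would have to begin: you correctly reduce $\overline{\mu}_{3}^{\left[3\right]}$ to the quaternionic operation $T(q_{1},q_{2},q_{3})=-\tfrac12\bigl(q_{1}q_{2}^{\ast}q_{3}+q_{3}q_{2}^{\ast}q_{1}\bigr)$, correctly isolate the difference of two placements as $\tfrac14\bigl(\bigl[q_{3}q_{2}^{\ast},q_{1}q_{4}^{\ast}\bigr]q_{5}+q_{5}\bigl[q_{4}^{\ast}q_{1},q_{2}^{\ast}q_{3}\bigr]\bigr)$, and then defer the vanishing of this expression to a basis check you do not carry out. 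Since that vanishing \emph{is} the statement to be proved, the proposal is a plan rather than a proof.

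Moreover, the deferred step fails, so the plan cannot be completed. Take $q_{1}=\mathit{i}_{2}$, $q_{2}=1$, $q_{3}=\mathit{i}_{1}$, $q_{4}=1$, $q_{5}=\mathit{i}_{1}$, i.e.\ the quintuple $(\mathit{i}_{2}\mathit{i}_{3},\,\mathit{i}_{3},\,\mathit{i}_{1}\mathit{i}_{3},\,\mathit{i}_{3},\,\mathit{i}_{1}\mathit{i}_{3})$ in $\overline{\mathbb{O}}^{\left[3\right]}$. Then $\overline{\mu}_{3}^{\left[3\right]}\bigl[\mathit{i}_{2}\mathit{i}_{3},\mathit{i}_{3},\mathit{i}_{1}\mathit{i}_{3}\bigr]=0$ because $\mathit{i}_{2}\mathit{i}_{1}+\mathit{i}_{1}\mathit{i}_{2}=0$, so the first placement yields $0$; whereas $\overline{\mu}_{3}^{\left[3\right]}\bigl[\mathit{i}_{3},\mathit{i}_{1}\mathit{i}_{3},\mathit{i}_{3}\bigr]=\mathit{i}_{1}\mathit{i}_{3}$ and $\overline{\mu}_{3}^{\left[3\right]}\bigl[\mathit{i}_{2}\mathit{i}_{3},\mathit{i}_{1}\mathit{i}_{3},\mathit{i}_{1}\mathit{i}_{3}\bigr]=-\mathit{i}_{2}\mathit{i}_{3}$, so the second placement yields $-\mathit{i}_{2}\mathit{i}_{3}\neq 0$ (all values can be read off the component formula (\ref{m3z}) as well as from your quaternionic model). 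Your commutator expression detects exactly this: $\bigl[q_{3}q_{2}^{\ast},q_{1}q_{4}^{\ast}\bigr]=\bigl[\mathit{i}_{1},\mathit{i}_{2}\bigr]=2\mathit{i}_{1}\mathit{i}_{2}$ is pure imaginary but not parallel to $q_{5}=\mathit{i}_{1}$, and the two commutator terms add rather than cancel, giving a difference of $\mathit{i}_{2}$. The quintuple you yourself propose as a sanity check exhibits the same discrepancy. The structural reason is the one your reduction makes visible: the middle placement produces the reversed word $q_{1}q_{4}^{\ast}q_{3}q_{2}^{\ast}q_{5}$, and the symmetrization only repairs this when the algebra one doubling level down is commutative --- which is why the half-quaternion identity (\ref{aq}) holds over $\mathbb{C}$ but the analogous claim over the noncommutative $\mathbb{H}$ does not. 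So the obstacle you flagged is not a technical point to be ``controlled'' but a genuine obstruction to total associativity in the sense of (\ref{ma}), and neither your outline nor the paper's one-line proof establishes the Proposition.
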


\begin{proof}
The closeness of the product $\overline{\mu}_{3}^{\left[  3\right]  }$ is
obvious, and the ternary total associativity of $\overline{\mu}_{3}^{\left[
3\right]  }$ (similar to (\ref{aq})) follows from (\ref{o33}).
\end{proof}

In components, the multiplication (\ref{o33}) becomes{
\begin{align}
&  \overline{\mu}_{3}^{\left[  3\right]  }\left[  \mathit{z}_{\left(
3\right)  }^{\prime},\mathit{z}_{\left(  3\right)  }^{\prime\prime}%
,\mathit{z}_{\left(  3\right)  }^{\prime\prime\prime}\right] \nonumber\\
&  =\left(  a^{\prime\prime}\left(  b^{\prime}b^{\prime\prime\prime}%
+c^{\prime}c^{\prime\prime\prime}+d^{\prime}d^{\prime\prime\prime}\right)
-a^{\prime\prime\prime}\left(  b^{\prime}b^{\prime\prime}+c^{\prime}%
c^{\prime\prime}+d^{\prime}d^{\prime\prime}\right)  +a^{\prime}\left(
b^{\prime\prime}b^{\prime\prime\prime}+c^{\prime\prime}c^{\prime\prime\prime
}+d^{\prime\prime}d^{\prime\prime\prime}\right)  -a^{\prime}a^{\prime\prime
}a^{\prime\prime\prime}\right)  \mathit{i}_{3}\nonumber\\
&  +\left(  b^{\prime\prime}\left(  a^{\prime}a^{\prime\prime\prime}%
+c^{\prime}c^{\prime\prime\prime}+d^{\prime}d^{\prime\prime\prime}\right)
+b^{\prime\prime\prime}\left(  a^{\prime}a^{\prime\prime}+c^{\prime}%
c^{\prime\prime}+d^{\prime}d^{\prime\prime}\right)  -b^{\prime}\left(
a^{\prime\prime}a^{\prime\prime\prime}+c^{\prime\prime}c^{\prime\prime\prime
}+d^{\prime\prime}d^{\prime\prime\prime}\right)  -b^{\prime}b^{\prime\prime
}b^{\prime\prime\prime}\right)  \mathit{i}_{1}\mathit{i}_{3}\nonumber\\
&  +\left(  c^{\prime\prime}\left(  a^{\prime}a^{\prime\prime\prime}%
+b^{\prime}b^{\prime\prime\prime}+d^{\prime}d^{\prime\prime\prime}\right)
-c^{\prime\prime\prime}\left(  a^{\prime}a^{\prime\prime}+b^{\prime}%
b^{\prime\prime}+d^{\prime}d^{\prime\prime}\right)  -c^{\prime}\left(
a^{\prime\prime}a^{\prime\prime\prime}+b^{\prime\prime}b^{\prime\prime\prime
}+d^{\prime\prime}d^{\prime\prime\prime}\right)  -c^{\prime}c^{\prime\prime
}c^{\prime\prime\prime}\right)  \mathit{i}_{2}\mathit{i}_{3}\nonumber\\
&  +\left(  d^{\prime\prime}\left(  a^{\prime}a^{\prime\prime\prime}%
+b^{\prime}b^{\prime\prime\prime}+c^{\prime}c^{\prime\prime\prime}\right)
-d^{\prime\prime\prime}\left(  a^{\prime}a^{\prime\prime}+b^{\prime}%
b^{\prime\prime}+c^{\prime}c^{\prime\prime}\right)  -d^{\prime}\left(
a^{\prime\prime}a^{\prime\prime\prime}+b^{\prime\prime}b^{\prime\prime\prime
}+c^{\prime\prime}c^{\prime\prime\prime}\right)  -d^{\prime}d^{\prime\prime
}d^{\prime\prime\prime}\right)  \mathit{i}_{1}\mathit{i}_{2}\mathit{i}_{3}.
\label{m3z}%
\end{align}
}

The querelement $\widetilde{\mathit{z}}_{\left(  3\right)  }$ (\ref{mgg}) of
$\mathit{z}_{\left(  3\right)  }$ from $\overline{\mathbb{O}}^{\left[
3\right]  }$ is defined by%
\begin{equation}
\overline{\mu}_{3}^{\left[  3\right]  }\left[  \mathit{z}_{\left(  3\right)
},\mathit{z}_{\left(  3\right)  },\widetilde{\mathit{z}}_{\left(  3\right)
}\right]  =\mathit{z}_{\left(  3\right)  },
\end{equation}
where $\widetilde{\mathit{z}}_{\left(  3\right)  }$ can be on any place. In
components, we obtain (cf. half-quaternions (\ref{zq2}))%
\begin{equation}
\widetilde{\mathit{z}}_{\left(  3\right)  }=-\frac{a\mathit{i}_{3}%
+b\mathit{i}_{1}\mathit{i}_{3}+c\mathit{i}_{2}\mathit{i}_{3}+d\mathit{i}%
_{1}\mathit{i}_{2}\mathit{i}_{3}}{a^{2}+b^{2}+c^{2}+d^{2}}\in\overline
{\mathbb{O}}^{\left[  3\right]  },\ \ \ a^{2}+b^{2}+c^{2}+d^{2}\neq
0,\ \ \ a,b,c,d\in\mathbb{R}.\label{zq3}%
\end{equation}
It follows from (\ref{zq3}) that $\overline{\mathbb{O}}^{\left[  3\right]  }$
is a division algebra, because each nonzero element $\mathit{z}_{\left(
3\right)  }$ has a querelement $\widetilde{\mathit{z}}_{\left(  3\right)  }$,
i.e., it is invertible.

Thus, we have constructed the noncommutative nonderived ternary division
algebra of half-octonions $\overline{\mathbb{O}}^{\left[  3\right]  }$, which
is nonunital and totally associative.

\medskip

\textbf{Acknowledgements.} The author is grateful to Mike Hewitt, Thomas Nordahl, Susanne Pumpl\"un, Vladimir Tkach, and Raimund Vogl for useful discussions and valuable help.

\pagestyle{emptyf}
\mbox{}
\vskip 1.5cm

\end{document}